\documentclass[11pt]{article}
\usepackage[title]{appendix}
\usepackage{subfiles}
\usepackage{graphicx} 
\usepackage{caption}
\usepackage{subcaption}
\graphicspath{ {./images/} }
\usepackage[utf8]{inputenc}
\usepackage{amsmath}
\usepackage{amssymb, amsthm, physics,leftidx, bbm}

\usepackage{yhmath}
\usepackage{mathrsfs}  

\usepackage[maxnames=99, doi=false,isbn=false,url=false,eprint=false]{biblatex}
\usepackage{tikz-cd}

\usepackage{xcolor} 

\usepackage{enumerate}
 \usepackage{soul}
\setstcolor{red}
\usepackage[normalem]{ulem}
\usepackage{dashrule}

\allowdisplaybreaks 
\usepackage[all]{xy}
\usepackage{mathtools}
\usepackage{stmaryrd}
\usepackage[shortlabels]{enumitem}

\usepackage{tikz-cd}
\usepackage[
 letterpaper, 
 margin = 1.5 in
 ]{geometry}

\usepackage[T1]{fontenc}

\usepackage{newtxtext,newtxmath}
  
 \theoremstyle{definition}

\newtheorem{thm}{Theorem} 

\newtheorem*{thmnonumber}{Theorem}
\newtheorem*{propnonumber}{Proposition}
\newtheorem{defprop}{Definition/Proposition}
\newtheorem{defn}{Definition}[section]	

\newtheorem{prop}{Proposition}[section]
\newtheorem{lem}{Lemma}[section]
\newtheorem{cor}{Corollary}[section]

\newtheoremstyle{dotlessS}{}{}{}{}{\color{blue}\bfseries}{}{ }{}
\theoremstyle{dotlessS}

\DeclareMathOperator{\fuk}{Fuk}

\DeclareMathOperator{\N}{\mathbb{N}}

\DeclareMathOperator{\C}{\mathbb{C}}

\DeclareMathOperator{\Span}{span}
 
\DeclareMathOperator{\Cl}{Cl}

\DeclareMathOperator{\Z}{\mathbb{Z}}

\DeclareMathOperator{\R}{\mathbb{R}}

\DeclareMathOperator{\orbit}{\mathcal{O}}

\DeclareMathOperator{\g}{\mathfrak{g}}

\DeclareMathOperator{\lam}{\lambda}

\newcommand{\upgeq}{\rotatebox{330}{$\geq$}} 
\newcommand{\downgeq}{\rotatebox{30}{$\geq$}}
\newcommand{\updots}{\rotatebox{330}{$\cdots$}}
\newcommand{\downdots}{\rotatebox{30}{$\cdots$}}

\DeclareMathOperator{\un}{\mathfrak{u}(n)}

\DeclareMathOperator{\m}{\mathfrak{m}}
\DeclareMathOperator{\Hom}{Hom}
\DeclareMathOperator{\End}{End}

\DeclareMathOperator{\Mq}{\mathcal{M}}

\DeclareMathOperator{\id}{Id}

\DeclareMathOperator{\qar}{\quad \Rightarrow \quad}

\DeclareMathOperator{\val}{val}

\DeclareMathOperator{\Ad}{Ad}

\DeclareMathOperator{\opposite}{op}
\DeclareMathOperator{\interior}{int}
\DeclareMathOperator{\Hess}{Hess}

\newcommand\greyt{\includegraphics[width=.07in]{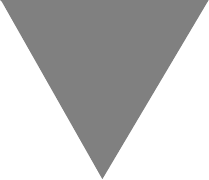}}

\newcommand\blackt{{\includegraphics[width=.07in]{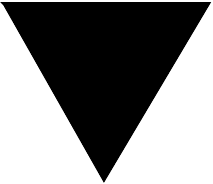}}}

\newcommand\whitet{{\includegraphics[width=.07in]{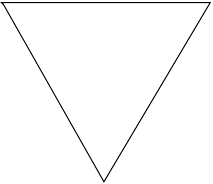}}}

\newcommand{\proj}{\mathbb{CP}}

\newcommand{\pair}[1]{\left\langle #1 \right\rangle }

\newcommand{\set}[2]{\left\{ #1 \, \middle|\, #2\right\}}
\newcommand{\lrp}[1]{\left(#1\right)}

\newcommand{\littletaller}{\mathchoice{\vphantom{\big|}}{}{}{}}
\newcommand\restr[2]{{% we make the whole thing an ordinary symbol
  \left.\kern-\nulldelimiterspace % automatically resize the bar with \right
  #1 % the function
  \littletaller % pretend it's a little taller at normal size
  \right|_{#2} % this is the delimiter
  }}

\usepackage{imakeidx}
\makeindex[intoc]
\usepackage[hidelinks]{hyperref}
\def\equationautorefname~#1\null{(#1)\null}

\usepackage{xurl}
\hypersetup{breaklinks=true}

\usepackage{lipsum}

\title{Lagrangian Floer theory on Woodward's multiplicity-free $U(2)$-manifolds}

\author{Yao Xiao}
\date{\today}

\begin{document}
\maketitle

\newcommand{\Addresses}{{ 
  \bigskip
  \footnotesize
\noindent\textsc{110 Frelinghuysen Road,
Piscataway, NJ, USA 08854}\par\nopagebreak
 \noindent \textit{E-mail address}: \texttt{yx489@math.rutgers.edu}
}}
\begin{abstract}
In this paper, we study a family of symplectic manifolds introduced by Woodward. These manifolds belong to the broader class of \emph{multiplicity-free} Hamiltonian $G$-manifolds, a generalization of toric manifolds for non-abelian Hamiltonian group actions. Prominent examples of multiplicity-free spaces include coadjoint orbits of $U(n)$ and $SO(n)$ equipped with multiplicity-free $U(n-1)$- and $SO(n-1)$-actions, respectively.  

Although these multiplicity-free $U(2)$-manifolds are not toric, we may study a family of Lagrangian tori by performing a symplectic cut that allows us to apply the toric Lagrangian Floer theory. In particular, we employ Venugopalan--Woodward's study of pseudoholomorphic curves under symplectic cuts to obtain the leading order potential. This allows us to identify a number of critical points of the potential function which correspond to a non-displaceable Lagrangian submanifold. 
Moreover, we adapt McDuff's probe method to show that the majority of the other Lagrangian submanifolds in these spaces are displaceable. 
Finally, we prove that the open-closed map for the Fukaya subcategory generated by these branes is an isomorphism. It follows that they satisfy Abouzaid--Fukaya--Oh--Ohta--Ono's generation criterion.
\end{abstract}

\tableofcontents
\section{Introduction} 

The work of Fukaya--Oh--Ohta--Ono and Cho--Oh on Lagrangian Floer theory has provided foundational insights into homological mirror symmetry, particularly in the context of toric manifolds, which have been studied extensively  \cite{CO, FOOOI, FOOOII, FOOOIII, fooobook1, FOOObook2, MR4800139}.

However, many Hamiltonian $G$-manifolds are not toric. A particularly interesting class is given by
multiplicity-free Hamiltonian manifolds, which generalize toric manifolds while exhibiting
richer geometric structures.

The examples constructed by Woodward \cite{WoodwardExample} provide important test cases in this direction. These manifolds are obtained from $U(3)$-coadjoint orbits via $U(2)$-equivariant symplectic cuts and admit Hamiltonian $U(2)$-actions. 
They  closely resemble the example
constructed by Tolman in \cite{Tolman}, in the sense that they have the same X-ray, an invariant associated with complexity-one Hamiltonian torus actions that is used to show that these manifolds do not admit compatible invariant K\"{a}hler structures.  The existence of such manifolds demonstrates that equivariant symplectic geometry is richer than equivariant K\"{a}hler geometry.
   
The goal of this paper is to study Woodward's multiplicity-free $U(2)$-manifolds via Lagrangian Floer theory. 
A key observation is that auxiliary symplectic cuts (different from the cuts that produced Woodward's examples) break these manifolds into tractable pieces, making them fit into the framework of split $A_{\infty}$ algebras by Venugopalan--Woodward \cite{VWSplit, VW}, which is built on \cite{CharestWoodward}. Accordingly, we adopt their model of $A_{\infty}$ algebras in place of the model used in \cite{xiao2023equivariantlagrangianfloertheory, xiao2024moment} or the aforementioned works of Fukaya--Oh--Ohta--Ono. This allows us to identify non-displaceable Lagrangian torus fibers by studying critical points of the potential function. 
Furthermore, we show that the resulting collection of Lagrangian branes split-generates the Fukaya category.

The study of Lagrangian Floer theory on certain multiplicity-free manifolds in the form of flag varieties has also been pursued in the works of Nishinou--Nohara--Ueda \cite{NNU}, Nohara--Ueda \cite{NU, NoharaUedaNonTorusFibers}, and Cho--Kim--Oh \cite{ChoKimOhCrit, ChoKimOh} among others. 
Most of these works study the potential functions of Gelfand--Cetlin fibers via toric degeneration, under the assumption that the symplectic manifold is Fano. 
In contrast, we apply the split $A_{\infty}$ algebras, constructed using symplectic field-theoretic techniques, developed by Charest--Woodward \cite{CharestWoodward} and Venugopalan--Woodward \cite{VW, VWSplit}, to obtain a leading order potential function without Fano assumptions.  

\medskip 

In the following, we denote a multiplicity-free $U(2)$-manifold constructed by Woodward in \cite{WoodwardExample} by $M$. (See Figure \ref{fig:polytopes of M}.)
\begin{figure}
    \centering
\resizebox{0.5\textwidth}{!}{ 

\tikzset{every picture/.style={line width=0.75pt}} %set default line width to 0.75pt        

\begin{tikzpicture}[x=0.75pt,y=0.75pt,yscale=-1,xscale=1]
%uncomment if require: \path (0,190); %set diagram left start at 0, and has height of 190

%Straight Lines [id:da6913648583729183] 
\draw    (141,86) -- (141.62,176) ;
%Straight Lines [id:da31135744064187376] 
\draw    (141.62,176) -- (230.83,86) ;
%Straight Lines [id:da4275024319638867] 
\draw    (185.38,13.2) -- (141,86) ;
%Straight Lines [id:da6612297875239461] 
\draw    (230.71,86) -- (252.3,77) ;
%Straight Lines [id:da7403957044951885] 
\draw    (141.11,86) -- (230.83,86) ;
\draw [shift={(230.83,86)}, rotate = 0] [color={rgb, 255:red, 0; green, 0; blue, 0 }  ][fill={rgb, 255:red, 0; green, 0; blue, 0 }  ][line width=0.75]      (0, 0) circle [x radius= 2.01, y radius= 2.01]   ;
%Straight Lines [id:da5691546872850083] 
\draw [color={rgb, 255:red, 208; green, 2; blue, 27 }  ,draw opacity=1 ][line width=1.5]    (220.36,231) -- cycle ;
%Straight Lines [id:da0007977295090592085] 
\draw    (185.38,13.2) -- (252.13,52.27) ;
%Straight Lines [id:da11943867514982742] 
\draw    (252.3,52.27) -- (252.3,77) ;
%Straight Lines [id:da09386662623023612] 
\draw    (252.3,77) -- (186.3,154) ;
%Straight Lines [id:da3132716998255566] 
\draw  [dash pattern={on 4.5pt off 4.5pt}]  (185.38,13.2) -- (186.3,154) ;
%Straight Lines [id:da543636490320943] 
\draw    (362.3,89) -- (494.3,133) ;
%Straight Lines [id:da7131617051878614] 
\draw    (287.42,129.88) -- (404.18,13.12) ;
\draw [shift={(406.3,11)}, rotate = 135] [fill={rgb, 255:red, 0; green, 0; blue, 0 }  ][line width=0.08]  [draw opacity=0] (5.36,-2.57) -- (0,0) -- (5.36,2.57) -- cycle    ;
\draw [shift={(285.3,132)}, rotate = 315] [fill={rgb, 255:red, 0; green, 0; blue, 0 }  ][line width=0.08]  [draw opacity=0] (5.36,-2.57) -- (0,0) -- (5.36,2.57) -- cycle    ;
%Straight Lines [id:da6941107063940581] 
\draw    (329.3,89) -- (329.3,134) ;
\draw [shift={(329.3,89)}, rotate = 90] [color={rgb, 255:red, 0; green, 0; blue, 0 }  ][fill={rgb, 255:red, 0; green, 0; blue, 0 }  ][line width=0.75]      (0, 0) circle [x radius= 2.01, y radius= 2.01]   ;
%Straight Lines [id:da5182885858757001] 
\draw    (329.3,134) -- (494.3,133) ;
%Straight Lines [id:da35764144938551445] 
\draw    (329.3,89) -- (362.3,89) ;
%Straight Lines [id:da060034258506083904] 
\draw    (252.15,55) -- (230.71,86) ;
%Straight Lines [id:da8212887817110629] 
\draw    (141.62,176) -- (186.3,154) ;

\end{tikzpicture}

}
    \caption{Left: GC polytope of $M$, Right: $U(2)$-Kirwan polytope of $M$}
    \label{fig:polytopes of M}
\end{figure}
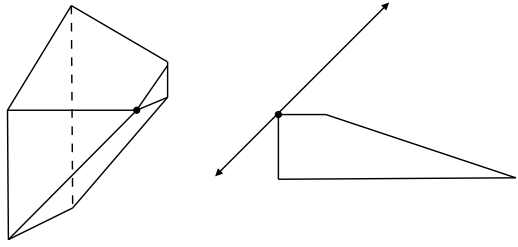 
There is a Gelfand--Cetlin system $\Phi:M\to \R^3$ on $M$ with image $\Delta = \Phi(M)$, whose interior fibers are Lagrangian tori. 

\medskip

First, we show that non-displaceable Lagrangian tori exist in Woodward's examples. 
\sloppy
\begin{thmnonumber}[Theorem \ref{thm Existence of a non-displaceable Lagrangian}]  
 There exists a non-displaceable Lagrangian torus in every multiplicity-free $U(2)$\-manifold $M$ constructed by Woodward in \cite{WoodwardExample}. 
\end{thmnonumber}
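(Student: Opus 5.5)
The plan is to find an interior point $u$ of the Gelfand--Cetlin polytope $\Delta=\Phi(M)$ together with a local system (bounding cochain) $b$ on the torus fiber $L_u=\Phi^{-1}(u)$ such that $(u,b)$ is a critical point of the potential function $W$. Floer cohomology $HF((L_u,b),(L_u,b))$ is then nonzero, and Hamiltonian invariance of Floer cohomology gives non-displaceability. The approach combines the abstract's strategy with standard toric arguments.

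\textbf{Step 1: Reduction to toric pieces.} On the open dense subset of $M$ where $\Phi$ is smooth, the Gelfand--Cetlin functions generate a Hamiltonian $T^3$-action. The non-smooth locus lies over the faces where Gelfand--Cetlin inequalities degenerate, coming from the $U(3)$-orbit structure. I would perform an auxiliary symplectic cut along a hyperplane in $\Delta$ separating the non-smooth part from a neighborhood of $u$. This produces a piece $M_{\le}$ that is a (possibly orbifold) toric manifold whose moment polytope is the corresponding truncation of $\Delta$, with the cut divisor appearing as a new facet. The other piece $M_{\ge}$ contains the singular part, and the two pieces are glued along the cut divisor $Z$.

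\textbf{Step 2: The leading order potential.} I would invoke the Venugopalan--Woodward split $A_\infty$ framework to compare disks in $M$ with broken disks in the cut pieces. The leading order terms of $W$ then come from Maslov index two disks in $M_{\le}$ hitting the toric divisors. This gives
\[ W_0(u,b)=\sum_j c_j\, y^{\nu_j} T^{\ell_j(u)}, \]
summing over the facets of $\Delta$ together with the correct contributions across $Z$. Here $\ell_j$ is the affine distance to the $j$-th facet, $\nu_j$ its normal, and $c_j$ counts the relevant disks; for facets of $\Delta$ itself, $c_j=1$. Higher order corrections carry strictly larger energy.

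\textbf{Step 3: Critical points.} I would use the Fukaya--Oh--Ohta--Ono leading-term argument. First, choose $u$ so that the minimal-energy facets $\ell_j(u)$ are attained by a collection of normals whose positive span is all of $\R^3$, i.e. they form a balanced configuration. Then solve the leading order critical point equation in $y\in(\Lambda_0^\times)^3$, which amounts to a monomial system with nonzero solutions. Finally, lift the solution to an honest critical point of the full $W$ by a Hensel/Newton iteration in the Novikov ring, using nondegeneracy of the leading term.

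Concretely, I would look for $u$ where three or four facet distances tie and are minimal. The natural candidate is the point maximizing $\min_j \ell_j$ under an appropriate weighting, as in McDuff's and FOOO's construction of the central fiber.

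\textbf{Main obstacle.} The hardest step is Step 2. One must show that the cut divisor $Z$ contributes no leading-order terms, and that disks passing into $M_{\ge}$ (through the non-toric region) have energy strictly greater than the minimal $\ell_j(u)$. This requires the energy/area estimates and compactness results from Venugopalan--Woodward. The position of the cut must then be tuned relative to $u$ so that the neck region sits at a larger energy level than the balanced facets. If the balanced point lies too close to the non-smooth locus, I would instead try a different cut direction, or use the bulk-deformed version of the potential.
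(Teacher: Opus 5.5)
Your overall architecture matches the paper's. You cut along $u_1-u_2=\epsilon$ to isolate the non-toric vertex $(\lambda_2,\lambda_2,\lambda_2)$, use the Venugopalan--Woodward split $A_\infty$ algebra, take the six facet terms of $\Delta$ as the leading-order potential, lift by a Hensel-type argument, and conclude $HF\neq 0$. The genuine gap is Step 3: the single-stage ``balanced'' fiber you look for does not exist in this polytope.

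If the normals of the minimal facets positively span $\mathbb{R}^3$, they must include:
\begin{itemize}
\item the back normal $(-1,-n,0)$, the only one with negative $u_1$-component;
\item the top and bottom normals $(1,0,-1)$ and $(0,-1,1)$, the only ones with nonzero $u_3$-component;
\item the left normal $(0,1,0)$, the only one with positive $u_2$-component.
\end{itemize}
Imposing $\ell_1=\ell_3=\ell_4=\ell_6=m$ gives $u=(\lambda_3+3m,\lambda_3+m,\lambda_3+2m)$. Then $\ell_2\ge m$ and $\ell_5\ge m$ force $m=\frac{\lambda_2-\lambda_3}{2}$. Finally $\ell_6=m$ forces $\frac{\lambda_2-\lambda_3}{r-\lambda_2}=\frac{2}{n+2}$, which Woodward's condition $\frac{\lambda_2-\lambda_3}{r-\lambda_2}<\frac{1}{n}$ with $n>2$ rules out. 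Similarly, $\min_j\ell_j$ is maximized not at a point but on a two-dimensional region of the plane $u_2=\frac{\lambda_2+\lambda_3}{2}$, because $\ell_1+\ell_2$ is constant.

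At the fiber that actually carries critical points, $u_0$ of \eqref{valuation of crit points}, the facet distances are stratified:
\begin{itemize}
\item only left and right are minimal, with $\ell_1=\ell_2=S_1$;
\item top, bottom and back tie at a higher level $S_2$;
\item front sits at $S_3>S_2$.
\end{itemize}
The lowest-order part $T^{S_1}(y+y^{-1})$ does not involve $x$ or $z$. Its Hessian on $(\Lambda^\times)^3$ is therefore degenerate, and the one-step Hensel/Newton lift ``using nondegeneracy of the leading term'' does not apply.

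What is missing is the stage-wise argument the paper uses:
\begin{enumerate}
\item at stage 1, solve $y^2=1$;
\item at stage 2, solve the leading-term equations of $xz^{-1}+zy^{-1}+x^{-1}y^{-n}$ in $(x,z)$;
\item check that both stage Hessians are nonzero at the six solutions;
\item invoke the stage-wise lifting result (Gonz\'alez--Woodward Prop.~4.13, FOOO Thm.~10.4).
\end{enumerate}
To locate $u_0$ in the first place, the paper reduces the critical point equations of the six-term $P$ to one polynomial in $z$ of degree $2n+4$. It reads the root valuations off its Newton polygon, which also disposes of the competing branch with $u_3=\lambda_2$ (those solutions are outside $\Delta$ or degenerate). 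Your proposal has no substitute for this step.

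Two smaller points.
\begin{itemize}
\item Deducing $HF\neq 0$ from a critical point in the split model needs a divisor identity for split disks, $\mathfrak{m}_{1,\beta}(c)=\langle\partial\beta,c\rangle\,\mathfrak{m}_{0,\beta}(1)$, fed into the FOOO spectral sequence. The paper proves this rather than quoting the toric case.
\item Your energy treatment of the neck is a workable alternative to the paper's argument. The paper identifies $M_-$ with a quadric threefold, $c_1=3PD[Y]$, so that no configuration entering $M_-$ is rigid. In your approach, any such configuration has $M_+$-area at least $u_1-u_2-\epsilon$, which at $u_0$ equals $2S_2-\epsilon>S_2$ for small $\epsilon$, so it cannot affect the first two stages.
\end{itemize}
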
  
To prove the theorem, we first obtain the leading order potential function of $M$ using the methods developed in Charest--Woodward \cite{CharestWoodward} and Venugopalan--Woodward \cite{VW}, and results in \cite{FOOOI, CO}. 
This allows us to locate a Gelfand--Cetlin fiber corresponding to certain critical points of the potential function. 
In the end, we show that the Lagrangian torus has non-trivial Floer cohomology, which implies that the Lagrangian is non-displaceable.

Moreover, we also show that the majority of the other fibers are displaceable by McDuff's probes \cite{Probes}. 
\begin{propnonumber}
[Proposition \ref{Proposition Probe-nondisplaceable set}] 
For $u=(u_1,u_2,u_3)\in \interior \Delta$, the Lagrangian torus $\Phi^{-1}(u)$ is displaceable if  
\[ u \not \in \set{\left(\frac{3\lambda_2-\lambda_3}{2}-t,\frac{\lambda_2+\lambda_3}{2}+t,\lambda_2 \right) }{0\leq t\leq \frac{\lambda_2-\lambda_3}{2}} \cup\{ u_0\}, \]
where $u_0$ is as in \eqref{valuation of crit points}. 
(See Figure \ref{fig: possibly non-displaceable set}.)
\end{propnonumber}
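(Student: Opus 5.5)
The plan is to use McDuff's probe displacement criterion \cite{Probes}, adapted to the Gelfand--Cetlin system $\Phi$ on $M$. The criterion is applied on the open dense set where the torus action generated by $\Phi$ is smooth and free. A probe is a rational segment $\sigma\subset\Delta$ with two properties: it starts at an interior point $w$ of a facet $F$, and its direction vector $v\in\Z^3$ is integrally transverse to $F$. McDuff's lemma then says that every fiber $\Phi^{-1}(u)$ with $u$ in the interior of $\sigma$ and lying less than halfway along $\sigma$ is displaceable. The proof of the lemma constructs a symplectic toric model near $\Phi^{-1}(\sigma)$: a neighborhood of the preimage of $F$ looks like $\C\times T^{*}T^{2}$, and the fiber is displaced by a rotation in the $\C$ factor. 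This construction uses only the local toric structure of $\Phi$ over $\sigma$ and near $F$.

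The first step is to justify using the lemma in our setting. The Gelfand--Cetlin functions are smooth on the preimage of the interior of $\Delta$ and on the preimages of the relative interiors of the facets. The exception is the preimages of those faces where the GC pattern degenerates, i.e.\ where consecutive interlacing inequalities become equalities simultaneously, and these faces have codimension $\ge 2$. Facets on which $\Phi$ is smooth and the corresponding circle acts semifreely with fixed set $\Phi^{-1}(F)$ will be called \emph{toric facets}. For these, McDuff's local argument goes through verbatim, provided the probe stays inside the interior of $\Delta$ together with the relative interior of its initial facet.

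In the toric picture of the symplectic cut used earlier, such facets are exactly those coming from the cut hyperplane and from the GC inequalities $u_1\ge u_2$, $u_2\ge u_3$ type walls that are smooth. The facet cut out by Woodward's $U(2)$-equivariant cut also qualifies. I would list these facets explicitly with their integral inward normals, written in terms of $\lambda_1,\lambda_2,\lambda_3$ and the cut parameter.

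The second step is combinatorial. For each $u\in\interior\Delta$ outside the exceptional set in the statement, I will produce a probe that displaces it. I plan to use probes parallel to coordinate directions and to the directions $\pm(1,-1,0)$ and $(0,0,\pm1)$; these are integrally transverse to the relevant GC facets. For each such direction I compute the entry facet and the exit point. A point $u$ is displaced whenever it lies strictly closer to the entry facet than to the exit along some probe. Comparing opposite directions shows that $u$ escapes displacement only if it is the midpoint for every admissible direction simultaneously. Solving these midpoint equations should pin down two kinds of solution. One is the segment $u=\bigl(\tfrac{3\lambda_2-\lambda_3}{2}-t,\tfrac{\lambda_2+\lambda_3}{2}+t,\lambda_2\bigr)$, which arises because there the probes in the $u_3$-direction run into the non-smooth apex face rather than a toric facet. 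The other is the single point $u_0$ from \eqref{valuation of crit points}. I would organize this step as a case split according to which region of $\Delta$ (cut into pieces by the planes $u_3=\lambda_2$ and the like) contains $u$.

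The main obstacle is the first step near the non-toric locus. Along the edge or vertex where the GC system fails to be smooth (the dashed line and the apex in Figure~\ref{fig:polytopes of M}), probes cannot start from or pass through those faces. Points near this locus are also exactly where the case analysis is tightest. My first attempt is to restrict to probes whose entire closure avoids the degenerate faces, and to check that this restricted family still covers everything outside the claimed set. If it does not, I would pass to the symplectic cut from the earlier sections. That cut makes the relevant piece honestly toric, so probes can be run in the cut toric manifold and the resulting displacing isotopies, supported away from the cut locus, can be pulled back.
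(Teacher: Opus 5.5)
Your strategy, McDuff's probes for $\Phi$ over $\Delta$ minus the apex, is the one the paper uses. However, the proposal has a genuine gap: Step~2, which is the whole proof, is only announced, and the mechanism you predict for it is wrong.

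Step~1 is misdiagnosed, though harmlessly. The functions $u_1,u_2$ fail to be smooth only where they coincide, and by interlacing this forces $u=(\lambda_2,\lambda_2,\lambda_2)$. The dashed segment in the figure is an ordinary hidden edge. All six facets $u_2\ge\lambda_3$, $u_2\le\lambda_2$, $u_1\ge u_3$, $u_3\ge u_2$, $u_1\ge\lambda_2$, $u_1+nu_2\le r+n\lambda_3$ are toric, and a probe never contains the apex. So McDuff's lemma applies directly and no auxiliary cut is needed.

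The real problem is your principle that $u$ survives only if it is the midpoint for every direction. It presupposes that both $v$ and $-v$ give probes through $u$, and this fails exactly where the answer is decided.
\begin{itemize}
\item The vertical probes are valid for every interior $u$ and never meet the apex, contrary to your explanation of the segment. They leave only the plane $u_1+u_2=2u_3$.
\item On the plane $u_3=\lambda_2$, the probes in direction $(1,0,0)$ from the front or top facet, and in direction $(0,-1,0)$ from the right facet, would start on the edges $\{u_1=u_3=\lambda_2\}$ and $\{u_2=u_3=\lambda_2\}$ through the apex. Only one-sided conditions remain there: the left probe displaces $u_2<\frac{\lambda_2+\lambda_3}{2}$. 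What survives in $\{u_1+u_2=2u_3\}\cap\{u_3=\lambda_2\}$ is exactly the segment.
\item Off that plane, the paper intersects three midplanes: $u_1+u_2=2u_3$, $u_2=\frac{\lambda_2+\lambda_3}{2}$ (left/right), and $2u_1+nu_2-u_3=r+n\lambda_3$ (top/back, for $u_3>\lambda_2$). These meet exactly at $u_0$.
\item For $u_3<\lambda_2$ the paper uses front/back probes. Their midplane $2u_1+nu_2=r+\lambda_2+n\lambda_3$ cannot meet $\{u_1+u_2=2u_3<2\lambda_2\}$, because $r>(n+1)\lambda_2-n\lambda_3$.
\end{itemize}

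Second, your fixed family of directions (coordinate directions and $\pm(1,-1,0)$) is not sufficient. The back facet has normal $(-1,-n,0)$, which pairs with $(0,-1,0)$ and $(1,-1,0)$ to $n$ and $n-1$, so neither is admissible there. Also, for $u_3>\lambda_2$ and $u_1>a=r-n(\lambda_2-\lambda_3)$, the $u_2$-probe from the left facet exits through the back facet. Its halfway condition then becomes $n(u_2-\lambda_3)<r+n\lambda_3-u_1-nu_2$ instead of $u_2<\frac{\lambda_2+\lambda_3}{2}$.

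Here is a concrete failure. Take $n=10$, $\lambda_2=1$, $\lambda_3=0$, $r=13$ (any $\lambda_1>13$). Then $a=3$ and $u_0=(\frac{11}{2},\frac12,3)$, and even $\frac{\lambda_2-\lambda_3}{r-\lambda_2}<\frac{1}{n+1}$ holds. The point $u=(\frac{92}{15},\frac25,\frac{49}{15})$ lies on the line through $u_0$ cut out by the top/bottom and top/back midplanes. Its affine distances to the left, right, top, bottom, front and back facets are $\frac{6}{15},\frac{9}{15},\frac{43}{15},\frac{43}{15},\frac{77}{15},\frac{43}{15}$. A direct check shows that no probe in any of your directions displaces it. The probe from the left facet in direction $(-6,1,-2)$ does: it exits through the right facet at parameter $1$, while $u$ sits at parameter $\frac25$.

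So in the region $u_1\ge a$, which can contain $u_0$, you need probes in genuinely non-coordinate directions. Choose them so that their pairings with the top, bottom and back normals are small relative to the distances. The paper's left/right, top/back and front/back lemmas are likewise stated only for $\lambda_2<u_1<a$. This region therefore has to be treated explicitly in any complete version of your Step~2.
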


Finally, we prove that the collection of Lagrangian branes identified in Section \ref{section Critical points of the potential}
satisfies the generation criterion (\cite{AbouzaidGeneration, AFOOO}) in the sense that the open-closed map is an isomorphism .

\begin{thmnonumber}
(Theorem \ref{OC is an isomorphism})
    The map $OC: \bigoplus\limits_{i=1}^6 HH_*(CF(L_i,L_i))\to  QH^*(M, \Lambda) $ in \autoref{length 0 OC} is an isomorphism.  
\end{thmnonumber}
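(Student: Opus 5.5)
The plan is to reduce the statement to a dimension count together with the injectivity of the length-zero part of the open-closed map on each summand. The paper's later sections identify six critical points of the leading order potential, each giving a brane $L_i$ (a Gelfand--Cetlin fiber with a local system, or bulk/weak bounding cochain) with nonzero Floer cohomology. First I check that each critical point is nondegenerate, i.e.\ the Hessian of the potential $W$ is invertible there. If so, then by the standard argument (Cho--Oh, FOOO) $HF(L_i,L_i)$ is isomorphic to a Clifford algebra $\Cl_3$ over $\Lambda$ attached to the Hessian. Its Hochschild homology is one-dimensional, concentrated in degree $3=\dim_\C M$ and spanned by the class of the top-degree element. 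So the source of $OC$ is $\Lambda^6$.

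Next I compute the target. $M$ is a six-dimensional Hamiltonian manifold obtained from a $U(3)$-coadjoint orbit by a symplectic cut. It carries a Hamiltonian torus action with isolated fixed points (the $T$-fixed points of the Gelfand--Cetlin/Kirwan picture). Hence $\dim H^*(M)$ equals the number of fixed points, which I would read off from the vertices of the GC polytope in Figure \ref{fig:polytopes of M} and the cut. I expect this number to be $6$, so $\dim_\Lambda QH^*(M,\Lambda)=6$. It then suffices to show that $OC$ is injective, equivalently surjective.

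For injectivity I use the eigenvalue decomposition of quantum multiplication by $c_1(M)$, or more generally by the classes coming from the moment map components. The closed-open map $CO$ sends $c_1$ to $W(b_i)\cdot 1$ on $L_i$, and $OC$ is a $QH$-module map. So $OC(HH_*(CF(L_i,L_i)))$ lies in the generalized eigenspace of $c_1\star$ with eigenvalue $W(b_i)$. When critical values coincide, I would instead use the full family $CO(\Phi_j)$, $j=1,2,3$, together with the $U(2)$-moment components, to separate the summands by their joint eigenvalues. Their valuations differ because the points $u_0$ etc.\ in \eqref{valuation of crit points} are distinct. The six images are therefore linearly independent as soon as each is nonzero.

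For nonvanishing of each image, I pair $OC(\text{top class})$ with $CO$ via the Cardy-type relation: $\langle OC(e_i), OC(e_i)\rangle$ equals, up to sign, $\det \Hess W(b_i)^{-1}$ times a unit. Alternatively, following Abouzaid--FOOO, the length-zero term in \autoref{length 0 OC} evaluated on the point class of $L_i$ has leading term $PD[L_i]\neq 0$ plus higher-energy corrections. Nondegeneracy of the Hessian then gives nonzero Mukai pairing.

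The main obstacle is the nondegeneracy computation and the separation of critical points. The potential is only known to leading order via Venugopalan--Woodward, so I would first check that the leading order potential has six nondegenerate critical points with pairwise distinct leading-order data. I would then invoke a Hensel/Kuranishi-type perturbation argument, as in FOOO's toric work, to conclude that the true critical points persist and remain nondegenerate.
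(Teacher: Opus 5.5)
Your skeleton matches the paper's. Both sides are six-dimensional. Each summand is one-dimensional because $HF(L_i,L_i)$ is the Clifford algebra of a nondegenerate form. Independence comes from $c_1\star$-eigenvalues together with a nonvanishing statement on the diagonal.

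Two remarks on the parts that work:
\begin{itemize}
\item Counting $\dim H^*(M)=6$ via fixed points, instead of using the Goertsches--Konstantis--Zoller ring, is fine for the maximal torus of $U(2)$, generated by the smooth functions $u_3$ and $u_1+u_2-u_3$. However, $\Delta$ has seven vertices, and you must discard $(\lambda_2,\lambda_2,\lambda_2)$, whose fiber is an $S^3$ containing no fixed point.
\item Passing from $HF(L_i,L_i)\cong\Cl$ to $HH_*(CF(L_i,L_i))$ needs intrinsic formality of nondegenerate Clifford algebras (Sheridan). The paper invokes this and you omit it.
\end{itemize}
Using the direct sum of generalized eigenspaces of $c_1\star$, so that independence follows from mere nonvanishing, is a legitimate and slightly cleaner alternative to the paper's route through Poincar\'e-orthogonality and self-pairings.

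\textbf{The fallback for coincident critical values fails.} All six branes live on the same torus $\Phi^{-1}(u_0)$. Proposition~\ref{Proposition critical points of po} produces six critical points with the \emph{same} valuation \eqref{valuation of crit points}, differing only in the leading coefficients $(x_0,y_0,z_0)$ of the local system, so no valuation-based invariant separates them. Moreover, a Hamiltonian function is not a class in $QH^*(M)$, so ``$CO(\Phi_j)$'' is not defined; and $u_1,u_2$ are not even smooth along that $S^3$.

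The repair is to show the case never occurs. At a solution of the leading-term equations $y_0^2=1$, $z_0=x_0^2y_0^n$, $z_0^2=x_0y_0$, one has $x_0/z_0=z_0/y_0=x_0^{-1}y_0^{-n}$. So the critical value is $(y_0+y_0^{-1})T^{S_1}+3(z_0/y_0)T^{S_2}+\cdots$, with $S_1<S_2$ as in Section~\ref{section Critical points of the potential}. The first coefficient $\pm 2$ records the sign of $y_0$. For each sign, $z_0/y_0$ runs through three distinct values: the cube roots of $1$, resp.\ of $(-1)^{n+1}$. Hence the six critical values are pairwise distinct, and your eigenspace argument applies as is.

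If you do want to treat equal values, the argument that works is the paper's. Since $\rho_{i,0}\neq\rho_{j,0}$, one gets a nontrivial rank-one local system $\Theta$ on $T^3$. Then $H^*(T^3;\Theta)=0$ kills the $E_2$-page, so $HF(L_i,L_j)=0$, and the Cardy relation gives orthogonality.

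\textbf{Your alternative nonvanishing argument is wrong as written.} Since $H^3(M;\Lambda)=0$, we have $PD[L_i]=0$. The generator of $HH_*(\Cl)$ is the top element $\beta_1\beta_2\beta_3$, which at energy zero is the point class of $L_i$. The energy-zero part of $OC$ pushes it forward to the point class of $M$. This is nonzero, and all corrections have positive $T$-valuation, so the image of each generator is nonzero. With that correction, the leading-term argument can replace the paper's Cardy computation on the diagonal.
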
 

\medskip
The structure of the paper is as follows.
In Section \ref{section The multiplicity-free $U(2)$-manifolds}, we begin by reviewing the Gelfand--Cetlin systems on $U(n)$-coadjoint orbits and recalling the construction in \cite{WoodwardExample}.
We then compute the leading order potential function in Section \ref{section potential}.
Analyzing the critical points of the potential function, in Section \ref{section Critical points of the potential}, we identify a Lagrangian torus inside the Gelfand--Cetlin polytope along with a collection of weak bounding cochains.
In Section \ref{section nondisplaceability}, we prove that this torus is non-displaceable and also show that the majority of the other Gelfand--Cetlin fibers are displaceable by McDuff’s probes \cite{Probes}. Finally, in Section \ref{section Generation of the Fukaya category}, we prove Theorem \ref{OC is an isomorphism}, which establishes that the collection of Lagrangian branes we obtain in Section \ref{section Critical points of the potential} satisfies the generation criterion. 

\subsection*{Acknowledgement} 
I express my gratitude to Chris Woodward for his guidance and support throughout this project and to Kenji Fukaya for suggesting this problem to me. I am thankful to Chi Hong Chow, Guangbo Xu, and Yiyue Zhu for helpful discussions. Finally, I am deeply grateful to Nate Bottman for his support and the Max Planck Institute for Mathematics in
Bonn for its hospitality and financial support.

\section{\texorpdfstring{The multiplicity-free $U(2)$-manifolds}{The multiplicity-free U(2)-manifolds}}
\label{section The multiplicity-free $U(2)$-manifolds}

A symplectic manifold that is equipped with a Hamiltonian action by a compact connected Lie group is called \textit{multiplicity-free} if the reduction of every moment fiber is zero-dimensional. Other equivalent definitions are included in Appendix \ref{section Multiplicity-free manifolds}.  
Compact, connected manifolds with effective, multiplicity-free torus actions are precisely the compact toric manifolds classified by Delzant \cite{DelzantClassification}.
They form one of the main families of examples in which homological mirror symmetry has been extensively studied.
Other classical examples of multiplicity-free manifolds include the coadjoint orbits of $U(n)$  and $SO(n)$.
On these spaces, Guillemin and Sternberg~\cite{GSGC,GSThimm} constructed completely integrable systems, called Gelfand--Cetlin systems, via Thimm’s trick.

Woodward's construction of the multiplicity-free $U(2)$-manifolds in \cite{WoodwardExample} relies on the existence of a Gelfand--Cetlin system on a generic $U(3)$-coadjoint orbit. 
We review some properties of the coadjoint orbits, the construction of the Gelfand--Cetlin system,  and the symplectic cut that produces Woodward's examples. 
We refer the reader to \cite{WoodwardExample}, \cite{WoodwardClassification}, \cite{GSGC}, \cite{GSThimm},  \cite{Pabiniak}, and \cite{lanethesis} for detailed discussions.

\subsection{Coadjoint orbits of unitary groups}
\label{section Coadjoint orbits of unitary groups}
We first review some properties of the coadjoint orbits of $G=U(n)$. 
We will identify the coadjoint action of $U(n)$ on $\mathfrak{u}(n)^*$  with the conjugation action of $U(n)$ on the space $\operatorname{Herm}_n$ of Hermitian $n\times n$ matrices through the isomorphisms 
\[ \mathfrak{u}(n)^* \xrightarrow{\varphi} \mathfrak{u}(n) \xrightarrow{\psi}\operatorname{Herm}_n.   \]
Here $\varphi(B):=A$ is characterized by $\trace(AX)=\pair{B,X}$ for all $X\in \mathfrak{u}(n)$, and $\psi(A)=iA$. 

Let $\lam \in \mathfrak{u}(n)^*$ be the diagonal matrix with nonincreasing diagonal entries 
\begin{equation}
\label{decreasing evalues}
\underbrace{\lam_{1}=\ldots =\lam_{n_1}}_{k_1\text{ copies}}\geq \underbrace{\lam_{n_1+1}= \ldots= \lam_{n_2}}_{k_2\text{ copies}}\geq \cdots \geq \underbrace{\lam_{n_{r}+1}= \ldots= \lam_{n}}_{k_{r+1}\text{ copies}},    
\end{equation} 
The coadjoint action of $U(n)$ on $\lam$ has stabilizer 
$U(k_1)\times \cdots U(k_{r+1})$. 
Hence, the coadjoint orbit $\orbit_{\lam}$ of $\lam$ is a homogeneous manifold 
\begin{equation}\label{coadjoint orbit as a homogeneous manifold}
 \orbit_{\lam} \cong U(n)/ \lrp{U(k_1)\times \cdots U(k_{r+1})}    
\end{equation} 
of dimension $n^2 - \sum_{i=1}^{r+1}k_i^2$. And the tangent space at a point $x\in \orbit_{\lam}$ is isomorphic to $\un/ \bigoplus_{i=1}^{r+1}\mathfrak{u}(k_i)$. 

Every coadjoint orbit comes equipped with the \textit{Kirillov–Kostant–Souriau symplectic form}, given by 
\[ \omega_{x}(A,B) =  \pair{x, [A,B]} \quad \forall x\in \orbit_{\lam}, \quad \forall A,B \in T_x\orbit_{\lam}. \]
The coadjoint action on $\mathcal{O}_{\lambda}$ is Hamiltonian, and the inclusion map $\mathcal{O}_{\lambda}\hookrightarrow \g^*$ is a moment map.

Moreover, $\orbit_{\lam}$ can be identified with a partial flag manifold. 
Let 
\[ 0 = n_0 <n_1< \cdots < n_{r+1} = n\]
be an strictly increasing sequence of integers. 
The \textit{partial flag manifold} $F(n_1,\ldots, n_r; n)$ is defined by
\[ \left \{ 0 =V_0 \subset V_1 \subset \cdots \subset V_{r+1}=\C^n \;\middle| \; \begin{aligned}
& V_i\subset \C^n \text{ is a complex subspace}, \\
&  \dim V_i = n_i \quad \forall 1\leq i \leq r  
\end{aligned}
 \right\}.  \]
 We call $F(1,2,\ldots, n-1;n)$ the \textit{full flag manifold} and denote it by $F(n)$.  

Let $k_i=n_i-n_{i-1}$.  
Since $U(n)$ acts on $F(n)$ transitively, and the stabilizer of the standard flag
\[ 0 \subset \Span_{\C}\{e_1,\ldots, e_{n_1}\} \subset \cdots \subset \Span_{\C}\{e_1,\ldots, e_n\} = \C^n \]
is $U(k_1)\times \cdots U(k_{r+1})$, 
we can identify the partial flag manifold $ F(n_1,\ldots, n_r; n)$ with the coadjoint orbit $\orbit_{\lam}$: 
\[ F(n_1,\ldots, n_r; n)\cong U(n)/ \lrp{U(k_1)\times \cdots U(k_{r+1})}\cong \orbit_{\lam}. \]

We demonstrate the construction of a completely integrable system on $\orbit_{\lam}$ as follows. 
Identifying $A\in U(j)$ with block matrices of the form $\begin{pmatrix}
    A & 0 \\
    0 & I
\end{pmatrix} \in U(n)$, we have the following chain of Lie algebras
\[ \mathfrak{u}(1) \subset \mathfrak{u}(2)\subset \cdots \subset  \mathfrak{u}(n)  
\]
and projection maps $p_j:\mathfrak{u}(n)^* \to \mathfrak{u}(j)^*$. 

Let us choose, for each $j$, the positive Weyl chamber $t_{j,+}^*$ of $\mathfrak{u}(j)^*$ to be the one corresponding to the subset of Hermitian matrices consisting of diagonal matrices with non-increasing diagonal entries.  
Let $\iota: \mathcal{O}_{\lambda} \to \mathfrak{u}(n)^*$ be the inclusion map, and let $q_j: \mathfrak{u}(j)^*
\to \mathfrak{t}_{j,+}^*$ be the quotient map by the coadjoint action of $U(j)$. Note that $q_j$ is not smooth everywhere. 
Guillemin-Sternberg \cite{GSGC} showed that 
\[ \left(q_{n-1}\circ p_{n-1}\circ\iota, \ldots,  q_1\circ p_{1}\circ\iota\right): \mathcal{O}_{\lambda} \to \mathfrak{t}_{n-1}^*\oplus\cdots \oplus  \mathfrak{t}_{1}^* \] 
defines a completely integrable system 
known as the \textit{Gelfand–Cetlin system}, which is continuous and smooth on an open dense subset of $\orbit_{\lam}$. 

More specifically, $q_k\circ p_{k}\circ\iota = \lrp{\lam_{1}^{(k)}, \ldots \lam_{k}^{(k)}}$, 
where $\lam_{j}^{(k)}: \mathcal{O}_{\lambda}  \to \R$ sends $x$ to the $j$-th largest eigenvalue of the principal $k\times k$ minor of $x$. 
By the min-max theorem, 
% \footnote{Theorem 2 (min-max theorem) Let $A$ be an $n \times n$ Hermitian matrix. Then, for all $1\leq i \leq n$, the $i$-th eigenvalue $\lam_i(A)$ of $A$ satisfies
% \[ \lam_i(A) = \sup_{
% \substack{V\subset \C^n \text{subspace}\\ \dim V=i }
% } \inf_{ \substack{v \in V\\ |v|=1} } v^* A v \]
% and
% \[ \lam_i(A) = \inf_{\substack{V\subset \C^n\\ \dim V=n-i+1}} \sup_{ \substack{v \in V\\ |v|=1} } v^* A v 
% \] 
% for all $1 \leq i \leq n$.}
we have the following inequalities 
\[
\xymatrix{
\lam_1 
\ar @{} [dr] |{\upgeq}
& 
& 
\lam_2 
\ar @{} [dr] |{\upgeq} 
& \cdots 
&  
\lam_{n-1} 
\ar @{} [dr] |{\upgeq}
& &
\lam_n 
\\
& 
\lam^{(n-1)}_{1} 
\ar @{} [dr] |{\upgeq} 
\ar @{} [ur] |{\downgeq} 
&  
& 
\cdots 
\ar @{} [ur] |{\downgeq} 
\ar @{} [dr] |{\upgeq} 
& 
& \lam_{n-1}^{(n-1)}
\ar @{} [ur] |{\downgeq} 
& 
\\
& 
& 
\lam_1^{(n-2)}
\ar @{} [ur] |{\downgeq} 
\ar @{} [dr] |{\updots} 
& 
&
\lam_{n-2}^{(n-2)}
\ar @{} [ur] |{\downgeq} 
\\
& 
&
& 
\lam^{(1)}_{1}
\ar @{} [ur] |{\downdots}
& 
&
& 
}
\]
By \eqref{decreasing evalues}, there are 
\[ \frac{n(n-1)}{2} - \sum_{i=1}^{r+1}\frac{(k_i-1)k_i}{2} = \frac{n^2-\sum\limits_{i=1}^{r+1}k_i^2}{2} = \frac{1}{2}\dim \orbit_{\lam}  \]
non-constant elements in the set 
\begin{equation}
\label{GC functions}
\set{ \lam_{j}^{(k)}}{1\leq k \leq n-1, 1\leq j\leq k }.     
\end{equation}  

\subsection{Construction of Woodward's examples}
\label{section Construction of Woodward's examples} 

In this section, we review the construction \cite{WoodwardExample} of the multiplicity-free Hamiltonian $U(2)$-manifolds through non-abelian symplectic cuts \cite{lmtw} using an action induced by the Gelfand--Cetlin system on a generic coadjoint orbit of $U(3)$.  

\begin{defn}[Multiplicity-free]
   A compact connected manifold Hamiltonian $G$-manifold $(M,\omega,G,\Phi)$ is \textit{multiplicity-free} if all of the reduced spaces $\Phi^{-1}(a)/G_a$ are points, where $\Phi: M\to \mathfrak{g}^*$ is the moment map for the Hamiltonian action of a compact connected Lie group $G$ on $(M,\omega)$. 
\end{defn}
Notable examples include compact symplectic toric manifolds, which are exactly the manifolds equipped with effective, multiplicity-free Hamiltonian actions by tori. 
Another example of a multiplicity-free manifold is the $U(3)$-coadjoint orbit
$\orbit_{\lam}$ of $\lam$, where $\lam$ is any diagonal matrix with diagonal entries
\[ \lam_1 > \lam_2 > \lam_3, \qquad \lam_1 , \lam_2 ,\lam_3  \in \Z.  \] 
To simplify the notation, we denote the Gelfand--Cetlin coordinates by
\[ 
u_1= \lam^{(2)}_1, \quad u_2= \lam^{(2)}_2, \quad u_3= \lam^{(1)}_1. 
\]
They satisfy the following inequalities, which define the Gelfand--Cetlin polytope $\Tilde{\Delta}$ of $\orbit_{\lam}$. (See Figure \ref{fig: polytopes of the coadjoint orbit}.)
\begin{equation}
\label{interlacing inequalities}
\xymatrix{
\lam_1 
\ar @{} [dr] |{\upgeq}
% _{\text{Back}}
& & 
\lam_2 
\ar @{} [dr] |{\upgeq}
% ^{\text{Right}}
& & 
\lam_3 
\\
& 
u_1 
\ar @{} [dr] |{\upgeq}
% _{\text{Top}}
\ar @{} [ur] |{\downgeq}
% ^{\text{Front}}
& & 
u_2
\ar @{} [ur] |{\downgeq}
% _{\text{Left}}
& 
\\
& & 
u_3 \ar @{} [ur] |{\downgeq}
% _{\text{Bottom}}
& & 
}    
\end{equation} 
\begin{figure}[h!]
\centering
%%% tikz picture flag manifold 

\tikzset{every picture/.style={line width=0.75pt}} %set default line width to 0.75pt        
\resizebox{0.8\textwidth}{!}{
\begin{tikzpicture}[x=0.75pt,y=0.75pt,yscale=-1,xscale=1]
%uncomment if require: \path (0,263); %set diagram left start at 0, and has height of 263

%Straight Lines [id:da5353435675181301] 
\draw    (99,117) -- (99,207) ;
%Straight Lines [id:da21932078003118483] 
\draw    (99,207) -- (188.83,117) ;
%Straight Lines [id:da06621808443074606] 
\draw    (243,27) -- (243,99) ;
%Straight Lines [id:da7509819717961005] 
\draw    (188.72,117) -- (243,99) ;
%Straight Lines [id:da8418816539897478] 
\draw    (99,207) -- (153,189) ;
%Straight Lines [id:da9820152390451401] 
\draw  [dash pattern={on 4.5pt off 4.5pt}]  (153,27) -- (153,189) ;
%Straight Lines [id:da43540384432073753] 
\draw    (153,189) -- (243,99) ;
%Straight Lines [id:da19943670549152237] 
\draw    (272.12,177.88) -- (400.88,49.12) ;
\draw [shift={(403,47)}, rotate = 135] [fill={rgb, 255:red, 0; green, 0; blue, 0 }  ][line width=0.08]  [draw opacity=0] (5.36,-2.57) -- (0,0) -- (5.36,2.57) -- cycle    ;
\draw [shift={(270,180)}, rotate = 315] [fill={rgb, 255:red, 0; green, 0; blue, 0 }  ][line width=0.08]  [draw opacity=0] (5.36,-2.57) -- (0,0) -- (5.36,2.57) -- cycle    ;
%Shape: Axis 2D [id:dp8055327977818851] 
\draw  (36,224) -- (72,224)(39.6,191.6) -- (39.6,227.6) (65,219) -- (72,224) -- (65,229) (34.6,198.6) -- (39.6,191.6) -- (44.6,198.6)  ;
%Straight Lines [id:da8232646163291494] 
\draw    (39.6,224) -- (59.25,202.48) ;
\draw [shift={(60.6,201)}, rotate = 132.4] [color={rgb, 255:red, 0; green, 0; blue, 0 }  ][line width=0.75]    (10.93,-3.29) .. controls (6.95,-1.4) and (3.31,-0.3) .. (0,0) .. controls (3.31,0.3) and (6.95,1.4) .. (10.93,3.29)   ;
%Shape: Axis 2D [id:dp04947058155185846] 
\draw  (279,224) -- (315,224)(282.6,191.6) -- (282.6,227.6) (308,219) -- (315,224) -- (308,229) (277.6,198.6) -- (282.6,191.6) -- (287.6,198.6)  ;
%Straight Lines [id:da44002198075609944] 
\draw    (99,117) -- (188.72,117) ;
\draw [shift={(188.72,117)}, rotate = 0] [color={rgb, 255:red, 0; green, 0; blue, 0 }  ][fill={rgb, 255:red, 0; green, 0; blue, 0 }  ][line width=0.75]      (0, 0) circle [x radius= 2.01, y radius= 2.01]   ;
%Straight Lines [id:da3734754046516503] 
\draw    (188.83,117) -- (243,27) ;
%Straight Lines [id:da33116734977217954] 
\draw    (99,117) -- (153.17,27) ;
%Straight Lines [id:da09134199290596545] 
\draw    (153.17,27) -- (243,27) ;
%Straight Lines [id:da3101402793344197] 
\draw    (333,117) -- (333,162) ;
\draw [shift={(333,117)}, rotate = 90] [color={rgb, 255:red, 0; green, 0; blue, 0 }  ][fill={rgb, 255:red, 0; green, 0; blue, 0 }  ][line width=0.75]      (0, 0) circle [x radius= 2.01, y radius= 2.01]   ;
%Straight Lines [id:da4976503856459954] 
\draw    (333,162) -- (531,162) ;
%Straight Lines [id:da9597002803347622] 
\draw    (333,117) -- (531,117) ;
%Straight Lines [id:da19801054242253247] 
\draw    (531,117) -- (531,162) ;

% Text Node
\draw (63.6,189.4) node [anchor=north west][inner sep=0.75pt]  [xscale=0.8,yscale=0.8]  {\Large $u_{1}$};
% Text Node
\draw (77.6,214.4) node [anchor=north west][inner sep=0.75pt]  [xscale=0.8,yscale=0.8]  {\Large $u_{2}$};
% Text Node
\draw (37.6,169.4) node [anchor=north west][inner sep=0.75pt]  [xscale=0.8,yscale=0.8]  {\Large $u_{3}$};
% Text Node
\draw (319.6,217.4) node [anchor=north west][inner sep=0.75pt]  [xscale=0.8,yscale=0.8]  {\Large $u_{1}$};
% Text Node
\draw (286.6,182.4) node [anchor=north west][inner sep=0.75pt]  [xscale=0.8,yscale=0.8]  {\Large $u_{2}$};

\end{tikzpicture}
}
\caption{Left: GC polytope of $\orbit_{\lam}$, Right: $U(2)$-moment polytope of $\orbit_{\lam}$}
    \label{fig: polytopes of the coadjoint orbit}
\end{figure}
Moreover, we can define a Hamiltonian $T^3$-action on a dense open subset $U$ of $\orbit_{\lam}$ by applying ``Thimm's trick". 
Let $1\leq k\leq 2$. 
Let $t\in T^k\subset U(k)$ be the set of all diagonal matrices in $U(k)$, and let $x\in \orbit_{\lam}$. 
Since $\interior \mathfrak{t}_{k,+}^*$ is the principal stratum of the coadjoint $U(k)$-action on $\orbit_{\lam}$, 
the principal cross-section $U_k:=\left(\lambda_1^{(k)},\ldots, \lambda_k^{(k)}\right)^{-1}(\interior \mathfrak{t}_{k,+}^*)$ is an open dense subset of $\orbit_{\lam}$ by \cite{lmtw} Theorem 3.1. 
Since every coadjoint orbit of $U(k)$ intersects $\mathfrak{t}_{k,+}^*$ in exactly one point, there exists $B \in U(k)$ such that $\Ad_B^*(x^{(k)}) = q_2(x)$, where $x^{(k)}$ is the principal $k\times k$ minor of $x$.  
By \cite{WoodwardExample} Proposition 3.4, 
\begin{equation}
\label{Thimm trick} t\ast x 
=  
\Ad^*_{ \small \begin{pmatrix}
 B^{-1}tB & 0\\
 0 & 1   
\end{pmatrix}}(x)  
\end{equation}
defines a Hamiltonian $T^k$-action on $U_k$ with moment map $\left(\lambda^{(k)}_1, \ldots,  \lambda^{(k)}_k\right)$.

Taking the product action by $T^2\times T^1$, we get an effective Hamiltonian $T^3$-action with moment map $(u_1,u_2,u_3):U\to \R^3$ on the open dense subset 
\[ U=U_1\cap U_2 =\orbit_{\lam}\setminus (u_1,u_2,u_3)^{-1}\{(\lambda_2,\lambda_2,\lambda_2)\}
\] 
of $\orbit_{\lam}$.  
Under this action, $U$ is a toric manifold with toric moment map image $\tilde{\Delta} \setminus \{(\lambda_2, \lambda_2,\lambda_2)\}$.

We now review the construction of the non-abelian symplectic cut. 
Let $n> 2$ and $ S^1\hookrightarrow  T^2, \theta \mapsto \operatorname{diag}(e^{i\theta}, e^{in\theta})$ be a circle subgroup.  
Consider the Gelfand--Cetlin action restricted to this circle subgroup with moment map $\mu=u_1+n u_2$. 
Let $r$ be an integer such that $\lambda_2 < r < \lambda_1$ and $\frac{\lambda_2-\lambda_3}{r-\lambda_2}<\frac{1}{n}$. 
We claim that we can use this circle action to perform a symplectic cut such that $S^1$ acts freely on $\mu^{-1}(r+n\lambda_3)$ as shown in Figure \ref{fig:the symplectic cut in Woodward's construction}. This level set is mapped to the dotted line in the $U(2)$-moment polytope. 
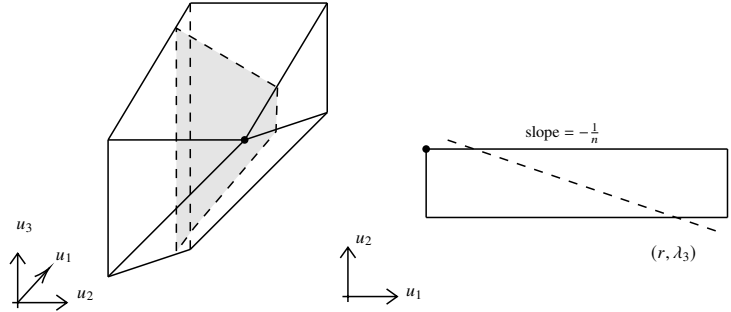
\begin{figure}[h!]
    \centering
\resizebox{0.7\textwidth}{!}{
\tikzset{every picture/.style={line width=0.75pt}} %set default line width to 0.75pt        

\begin{tikzpicture}[x=0.75pt,y=0.75pt,yscale=-1,xscale=1]
%uncomment if require: \path (0,236); %set diagram left start at 0, and has height of 236

%Shape: Polygon [id:ds1945935296189384] 
\draw  [color={rgb, 255:red, 0; green, 0; blue, 0 }  ,draw opacity=1 ][fill={rgb, 255:red, 128; green, 128; blue, 128 }  ,fill opacity=0.2 ][dash pattern={on 4.5pt off 4.5pt}][line width=0.75]  (230.33,69.57) -- (229.33,98.57) -- (175.42,161.89) -- (163.86,177.5) -- (163.86,30.5) -- cycle ;
%Shape: Axis 2D [id:dp154765808627652] 
\draw  (273,207) -- (309,207)(276.6,174.6) -- (276.6,210.6) (302,202) -- (309,207) -- (302,212) (271.6,181.6) -- (276.6,174.6) -- (281.6,181.6)  ;
%Straight Lines [id:da08766519906119286] 
\draw  [dash pattern={on 4.5pt off 4.5pt}]  (342.12,104.09) -- (519.12,164.09) ;
%Straight Lines [id:da8816931258679127] 
\draw    (328,110) -- (328,155) ;
\draw [shift={(328,110)}, rotate = 90] [color={rgb, 255:red, 0; green, 0; blue, 0 }  ][fill={rgb, 255:red, 0; green, 0; blue, 0 }  ][line width=0.75]      (0, 0) circle [x radius= 2.01, y radius= 2.01]   ;
%Straight Lines [id:da6782042959885013] 
\draw    (328,155) -- (526,155) ;
%Straight Lines [id:da8590539406637137] 
\draw    (328,110) -- (526,110) ;
%Straight Lines [id:da18942635354923365] 
\draw    (526,110) -- (526,155) ;
%Straight Lines [id:da01561317723361455] 
\draw    (119,104) -- (119,194) ;
%Straight Lines [id:da8047231461250318] 
\draw    (119,194) -- (208.83,104) ;
%Straight Lines [id:da11847973681560042] 
\draw    (208.72,104) -- (263,86) ;
%Straight Lines [id:da37525593136080904] 
\draw    (119,194) -- (173,176) ;
%Straight Lines [id:da07620184921349638] 
\draw  [dash pattern={on 4.5pt off 4.5pt}]  (173,14) -- (173,176) ;
%Straight Lines [id:da3221781422876696] 
\draw    (173,176) -- (263,86) ;
%Shape: Axis 2D [id:dp8058169920157092] 
\draw  (56,211) -- (92,211)(59.6,178.6) -- (59.6,214.6) (85,206) -- (92,211) -- (85,216) (54.6,185.6) -- (59.6,178.6) -- (64.6,185.6)  ;
%Straight Lines [id:da2699402745244187] 
\draw    (59.6,211) -- (79.25,189.48) ;
\draw [shift={(80.6,188)}, rotate = 132.4] [color={rgb, 255:red, 0; green, 0; blue, 0 }  ][line width=0.75]    (10.93,-3.29) .. controls (6.95,-1.4) and (3.31,-0.3) .. (0,0) .. controls (3.31,0.3) and (6.95,1.4) .. (10.93,3.29)   ;
%Straight Lines [id:da10232040760017258] 
\draw    (119,104) -- (208.72,104) ;
\draw [shift={(208.72,104)}, rotate = 0] [color={rgb, 255:red, 0; green, 0; blue, 0 }  ][fill={rgb, 255:red, 0; green, 0; blue, 0 }  ][line width=0.75]      (0, 0) circle [x radius= 2.01, y radius= 2.01]   ;
%Straight Lines [id:da0036339257811669468] 
\draw    (208.83,104) -- (263,14) ;
%Straight Lines [id:da10609919445906557] 
\draw    (119,104) -- (173.17,14) ;
%Straight Lines [id:da5722933685600614] 
\draw    (173.17,14) -- (263,14) ;
%Straight Lines [id:da10527896798246783] 
\draw    (263,14) -- (263,86) ;

% Text Node
\draw (313.6,200.4) node [anchor=north west][inner sep=0.75pt]  [xscale=0.8,yscale=0.8]  {$u_{1}$};
% Text Node
\draw (280.6,165.4) node [anchor=north west][inner sep=0.75pt]  [xscale=0.8,yscale=0.8]  {$u_{2} \ $};
% Text Node
\draw (83.6,176.4) node [anchor=north west][inner sep=0.75pt]  [xscale=0.8,yscale=0.8]  {$u_{1}$};
% Text Node
\draw (97.6,201.4) node [anchor=north west][inner sep=0.75pt]  [xscale=0.8,yscale=0.8]  {$u_{2}$};
% Text Node
\draw (57.6,156.4) node [anchor=north west][inner sep=0.75pt]  [xscale=0.8,yscale=0.8]  {$u_{3} \ $};
% Text Node
\draw (392,91.4) node [anchor=north west][inner sep=0.75pt]  [font=\small,xscale=0.8,yscale=0.8]  {$\text{slope} =-\frac{1}{n}$};
% Text Node
\draw (474,172.4) node [anchor=north west][inner sep=0.75pt]  [xscale=0.8,yscale=0.8]  {$( r,\lambda _{3})$};

\end{tikzpicture}

}
    \caption{The symplectic cut. 
    Right: Gelfand--Cetlin $T^2$-moment polytope. }
    \label{fig:the symplectic cut in Woodward's construction}
\end{figure}
For a symplectic cut as in Figure \ref{fig:the symplectic cut in Woodward's construction} to make sense, we need the action to be free on $\mu^{-1}(r+n\lambda_3)$. 
Recall the following lemma by Delzant. 
\begin{lem}[Delzant, \cite{WoodwardExample} Lemma 3.6] 
\label{Delzant lemma}
Let $M$ be a multiplicity-free compact Hamiltonian $G$-space with moment polytope $\Delta$, let $F$ be a face of $\Delta$ contained in $\interior \mathfrak{t}_+^*$, and let $m\in \Phi^{-1}(F)$. Then the isotropy subgroup $G_m$ of $m$ is connected and its Lie algebra is $\operatorname{ann}(F) \subset \mathrm{t}$.
\end{lem}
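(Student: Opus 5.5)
The plan is to reduce the statement to the local normal form for multiplicity-free spaces near a point whose moment image lies in the interior of the positive Weyl chamber. Since $F\subset \interior \mathfrak{t}_+^*$, every point $a\in F$ has centralizer $G_a=T$. By the symplectic cross-section theorem (\cite{lmtw} Theorem 3.1), the principal cross-section $Y=\Phi^{-1}(\interior\mathfrak{t}_+^*)$ is a $T$-invariant symplectic submanifold, and $G\cdot Y=G\times_T Y$ is open in $M$. For $m\in\Phi^{-1}(F)\subset Y$ we have $G_m\subset G_{\Phi(m)}=T$ by equivariance, so $G_m=T_m$. The question therefore becomes one about the Hamiltonian $T$-space $Y$ with moment map $\Phi|_Y$.

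First, multiplicity-freeness of $M$ passes to $Y$: the reduced spaces of $Y$ by $T$ at $a\in\interior\mathfrak{t}_+^*$ coincide with those of $M$ by $G$, so they are points. A Hamiltonian $T$-space with zero-dimensional reduced spaces has $\dim Y=2\dim T$ on the open set where the action is locally free, so $Y$ is (non-compact) toric for $T/\ker$. Since $\Phi(M)\cap\interior\mathfrak{t}_+^*=\Delta\cap\interior\mathfrak{t}_+^*$, the image of $Y$ is locally the convex polyhedral set $\Delta$ near $F$.

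Second, I would apply the local normal form (Marle--Guillemin--Sternberg) at $m\in Y$. Near $m$, $Y\cong T\times_{T_m}(\mathfrak{t}_m^{\circ}\times V)$ with $V$ a symplectic $T_m$-representation, and the moment map is $\Phi(m)+\eta+\Phi_V(v)$. Multiplicity-freeness forces $\dim V=2\dim T_m$ and forces the weights of $V$ to span $\mathfrak{t}_m^*$. The local image is then $\Phi(m)+\mathfrak{t}_m^{\circ}+\mathrm{cone}(\text{weights})$. The face of this cone through $\Phi(m)$ is $\Phi(m)+\mathfrak{t}_m^{\circ}$. The minimal face of $\Delta$ containing $\Phi(m)$ is $F$ if $m$ is chosen over the relative interior of $F$, so $\operatorname{ann}(\mathfrak{t}_m)$ equals the linear span of $F-F$. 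Hence $\mathrm{Lie}(G_m)=\mathfrak{t}_m=\operatorname{ann}(F)$. For $m$ over a proper subface, the same argument with that subface gives the corresponding annihilator. I expect the statement intends $\Phi(m)$ in the relative interior of $F$, and I would state that interpretation.

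Third, and this is the main obstacle, I need connectedness of $T_m$. I would argue via effectiveness and weights. The weights of $V$ are $\dim T_m$ weights spanning $\mathfrak{t}_m^*$; for the local model to be a smooth toric chart on the effective quotient, they must form a $\mathbb{Z}$-basis of the character lattice of $T_m$. Then $T_m$ acts on $V\cong\mathbb{C}^k$ as the standard torus, which is connected. If $T_m$ had a nontrivial finite component group $\Gamma$, then $\Gamma$ would act trivially on $V$ modulo the identity component, and so it would fix a whole neighborhood in $T\times_{T_m}(\mathfrak{t}_m^\circ\times V)$. That contradicts the effectiveness of the action near principal points, or, equivalently, the fact that the principal isotropy is connected by Delzant's argument. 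Here I would invoke the standing assumption that the action is effective, together with the fact that generic isotropy for a multiplicity-free $G$-space restricted to $Y$ is trivial. The delicate point is making sure the weights form a lattice basis rather than just spanning. I would derive this from the principal stabilizer of $Y$ being trivial: any finite-index discrepancy would produce a nontrivial finite subgroup fixing an open set.
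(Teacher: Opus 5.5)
\textbf{There is a genuine gap, at the connectedness step you flagged yourself.}

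The paper gives no proof of this lemma; it is quoted from Woodward, after Delzant. So your argument has to stand on its own. The earlier steps are fine:
\begin{itemize}
\item the reduction $G_m=T_m$ via the cross-section;
\item the passage of multiplicity-freeness to $Y$;
\item the identification $\mathrm{Lie}(T_m)=\operatorname{ann}(F)$ from the local model, for $\Phi(m)$ in the relative interior of $F$, as you rightly stipulate.
\end{itemize}

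For connectedness you appeal to effectiveness and to ``the fact that generic isotropy for a multiplicity-free $G$-space restricted to $Y$ is trivial.'' Neither gives what you need. Effectiveness of $G$ on $M$ does not imply effectiveness of $T$ on $Y$. An element of $T$ fixing $Y$ pointwise acts trivially on $G\cdot Y$ only if all its $G$-conjugates also fix $Y$. Principal isotropy groups of a non-abelian action are only conjugate, not equal.

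Here is a concrete counterexample. Let $SO(2)$ act on $S^2$ by rotation at twice the standard speed, with moment image $[a,b]$, $0<a<b$. Let $M=SO(3)\times_{SO(2)}S^2$ be the symplectically induced space; it is symplectic because $[a,b]$ lies in the open chamber. Let $R_\pi\in SO(2)$ be the rotation by $\pi$.
\begin{itemize}
\item $SO(3)$ acts effectively on the compact $4$-manifold $M$, since the normal core of $\{1,R_\pi\}$ in $SO(3)$ is trivial.
\item The action is multiplicity-free, with $Y=S^2$ and $\Delta=[a,b]\subset\interior\mathfrak{t}_+^*$.
\item Yet for $m$ over $(a,b)$ one gets $G_m=T_m=\{1,R_\pi\}\cong\mathbb{Z}/2$, which is disconnected.
\end{itemize}
The same $\mathbb{Z}/2$ principal isotropy occurs for $SO(3)\subset SU(3)$ acting on $\mathbb{CP}^2$. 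So the lemma as literally stated needs an extra hypothesis, and your final step cannot be completed from the stated hypotheses.

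\textbf{What your method does prove.} It proves the lemma under the additional assumption that the kernel $K$ of the $T$-action on $Y$ is connected. This holds, for instance, when $T$ acts effectively on $Y$, which is Delzant's toric setting. It is also what holds where the paper uses the lemma: the Gelfand--Cetlin torus on its toric locus, and $T^2$ on the cross-section of $\orbit_\lambda$, whose generic stabilizer is trivial. With that assumption the argument runs as follows.
\begin{enumerate}
\item Multiplicity-freeness forces the weights of $V$ to be linearly independent. Hence $\dim V=2\dim(T_m/K)$, not $2\dim T_m$ as you wrote. For a regular coadjoint orbit, $Y$ is a point, $V=0$ and $T_m=T$, so the weights need not span $\mathfrak{t}_m^*$.
\item The kernel of the weight homomorphism $T_m\to U(1)^k$ acts trivially on the local model, because $T$ is abelian. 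So it acts trivially on an open subset of the connected manifold $Y$, hence on all of $Y$. It is therefore contained in, and hence equal to, $K$.
\item Thus $T_m/K\cong U(1)^k$, and $T_m$ is connected whenever $K$ is.
\end{enumerate}
Over the relative interior of $\Delta$ one has $T_m=K$, so this hypothesis cannot be dropped. You should state it explicitly rather than try to derive it from effectiveness of $G$.
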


We apply Lemma \ref{Delzant lemma} to the multiplicity-free $U(2)$-manifold $\mathcal{O}_{\lambda}$, the isotropy subgroup for the Gelfand--Cetlin circle action at an element $C\in q\circ \Phi^{(2)}(x)$ for an $x$ in the interior of $[\lambda_2,\lambda_1]\times [\lambda_3,\lambda_2]$ is trivial.  
Suppose $A$ is in the preimage of one of the endpoints of the intersection of the dotted line with the rectangle, and let $B= \begin{pmatrix}
    \alpha & \beta \\
    \gamma & \delta
\end{pmatrix}$ be as in \autoref{Thimm trick}. 
If $t=\begin{pmatrix}
    t_1 & 0 \\
    0 & t_2
\end{pmatrix}\in $ satisfies $t \ast A=A$, then by Lemma \ref{Delzant lemma},
\[
B^{-1}AB \in \begin{pmatrix}
    1 & 0 \\
    0 & U(1)
\end{pmatrix} \quad
\Longrightarrow  \quad t_1=t_2=1 \in U(1). 
\]
 
This implies that the isotropy subgroup for the Gelfand--Cetlin action by the chosen circle at $A$ is trivial.  
Therefore, the action is free on $\mu^{-1}(r+n\lambda_3)$. 

We take $M$ to be the symplectic cut at a regular level $r+n\lambda_3$ of $\mu$: 
\begin{align}
\label{defn of M}
    M \cong (\orbit_{\lam})_{\mu \leq r+n\lambda_3} \cong \mu^{-1}(-\infty, r+n\lambda_3) \cup \mu^{-1}(r+n\lambda_3)/S^1. 
\end{align}
We require the parameters $n\in \Z, r,\lambda_1,\lambda_2,\lambda_3\in \R$ to satisfy the following conditions.
\begin{align}
\label{n>2}
& n > 2 
\\
&\lambda_2<r<\lambda_1 \label{relations between lambda and r}\\
& \frac{\lambda_2-\lambda_3}{r-\lambda_2}<\frac{1}{n}
\label{cannot cut the S3 vertex}
\end{align}
Woodward (\cite{WoodwardExample} and \cite{WoodwardClassification}) showed that $M$, denoted by $H_n$ in \cite{WoodwardExample}, is a multiplicity-free Hamiltonian $U(2)$-manifold. Using Tolman's X-ray criterion in \cite{Tolman}, Woodward also showed that $M$ does not admit a $T^2$-invariant K\"{a}hler structure. 

The Gelfand--Cetlin polytope $\Delta$ of the manifold $M$ defined in Section \ref{section Construction of Woodward's examples} resulting from the symplectic cut is defined by the inequalities in \autoref{interlacing inequalities}, except that we replace the inequality $\lambda_1-u_1\geq 0$ with $-u_1-nu_2+r+n\lambda_3\geq 0$, and is shown in Figure \ref{fig:polytopes of M}. 
$M$ inherits the coordinates of the Gelfand--Cetlin system, which we denote by
\begin{equation}
\label{Gelfand--Cetlin system on M map}
\Phi = (u_1,u_2,u_3): M \to \Delta \subset \R^3.     
\end{equation}

\section{\texorpdfstring{The split $A_{\infty}$ algebra}{The split A-infinity algebra}}

\label{section split algebra}
Consider $M$ as in \autoref{defn of M} and $L= \Phi^{-1}(u)$ for any $u\in \interior \Delta$. 
Since $\Phi: M\setminus \Phi^{-1}\{(\lambda_2,\lambda_2,\lambda_2)\} \to \Delta \setminus \{(\lambda_2,\lambda_2,\lambda_2)\}$ is a toric moment map,  $L$ is a Lagrangian torus. 
Following \cite{VWSplit}, we consider the split $A_{\infty}$ algebra associated to a relateive Lagrangian $L$, and any local system in $ \Hom(\pi_1(L),  U_{\Lambda} )$.  

Let 
	\begin{equation}
	\label{Novikov ring}
	\Lambda_{0}
	=
	\set{
		x=\sum_{i\in \N} a_iT^{\lambda_i} 
	}
	{  
     a_i\in \C, \quad \lambda_i\in \R  \quad \forall i\in \N, \quad \lim_{i\to \infty}\lambda_i = \infty 
	}	  
	\end{equation}  
be the Novikov ring, and $\Lambda$ be the fractional field of $\Lambda_0$. 
Let 
\begin{equation}
\label{unitary novikov}
U_{\Lambda} 
=\set{
	x=a_0+\sum_{i\in \Z_+}a_i T^{\lambda_i}
}
{  
\begin{aligned}
& x\in \Lambda_0, \quad 
    a_0\in \C\setminus \{0\},  \\
& \lambda_i\in \R_{>0}   \quad \forall i\in \Z_+, \quad \lim_{i\to \infty}\lambda_i = \infty
\end{aligned}
}.     
\end{equation}

The coordinates on $\R^3\supset \Delta $ induce a basis $\{e_1^*,e_2^*, e_3^*\}$ for $\pi_1(L)$. Then every $\rho \in \Hom(\pi_1(L),  U_{\Lambda} )$ is determined by $\rho(e_1^*), \rho(e_2^*), \rho(e_3^*)$. 
 
Let $\epsilon>0$ be a rational number such that $Q_> = \{ u_2-u_1+ \epsilon <0\}$ intersects $\Delta$ nontrivially and $\Phi(L)\in Q_>$. 
We also define
\[ Q= \{u_2-u_1+ \epsilon = 0\},\quad  Q_< =  \{ u_2-u_1+ \epsilon >0 \}.\] 
 Then $(M, \mathcal{Q}=\{Q,Q_>,Q_<\}, \Phi)$ is a \textit{tropical Hamiltonian action} as in \cite{VW} Definition 3.3.

Consider the hypersurface $Z\subset M$ defined by $\Phi^{-1}\lrp{Q}$.

Let 
\[ M_> =\Phi^{-1}(\{ u_2-u_1+ \epsilon <0\}), \qquad M_< =\Phi^{-1}(\{ u_2-u_1+ \epsilon >0\}).  \]
Let $\mathfrak{t}_Q = \Span\{(-1,1,0)\}\subset \mathfrak{t}\cong \R^3$ and $T_Q = \exp(\mathfrak{t}_Q )\cong S^1$. By Lemma \ref{Delzant lemma}, the $T_Q$-action on $Z$ is free, and we can perform a symplectic cut along $Z$. 
Let $Y=Z/T_Q$ be the quotient. 
By properties of symplectic cuts \cite{Cuts}, \[ M_+ = M_>\cup Y, \quad  M_- = M_<\cup Y  \]
carry natural symplectic structures, and $L$ is a Lagrangian torus in $M_+$. 

The disjoint union of $M_>, M_<$, and 
$M_Q\cong \Phi^{-1}(Q)\times \mathfrak{t}_Q$, is the \textit{broken manifold} $\mathfrak{X}$, as in Definition 3.31 in \cite{VW}, corresponding to this tropical Hamiltonian action.  

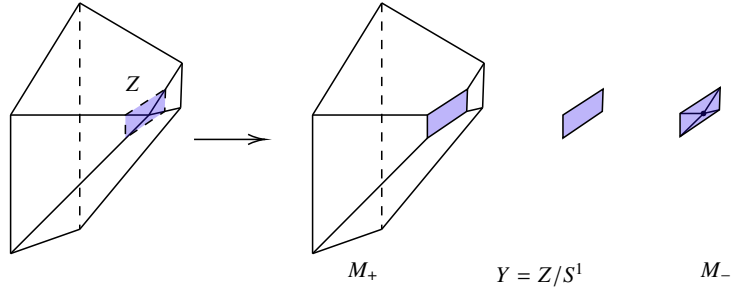
\begin{figure}[h!]
    \centering
 
\tikzset{every picture/.style={line width=0.75pt}} %set default line width to 0.75pt        
\resizebox{0.7\textwidth}{!}{
\begin{tikzpicture}[x=0.75pt,y=0.75pt,yscale=-1,xscale=1]
%uncomment if require: \path (0,263); %set diagram left start at 0, and has height of 263

%Straight Lines [id:da7767697310387749] 
\draw    (45,116) -- (45,206) ;
%Straight Lines [id:da020239944311097546] 
\draw    (45,206) -- (134.83,116) ;
%Straight Lines [id:da5965769731413388] 
\draw    (89.89,43.2) -- (45,116) ;
%Straight Lines [id:da3601985669043801] 
\draw    (134.72,116) -- (155.36,111.27) ;
%Straight Lines [id:da439840051517534] 
\draw    (45,206) -- (89.89,190.2) ;
%Straight Lines [id:da6195671424377922] 
\draw    (45,116) -- (134.72,116) ;
%Straight Lines [id:da40242990600552975] 
\draw    (156.36,82.27) -- (134.72,116) ;
%Straight Lines [id:da580485178596951] 
\draw    (89.89,43.2) -- (156.36,82.27) ;
%Straight Lines [id:da4091566912168394] 
\draw    (156.36,82.27) -- (155.36,111.27) ;
%Straight Lines [id:da31307308272605294] 
\draw    (155.36,111.27) -- (89.89,190.2) ;
%Straight Lines [id:da8524017191268627] 
\draw  [dash pattern={on 4.5pt off 4.5pt}]  (89.89,43.2) -- (89.89,190.2) ;
%Shape: Polygon [id:ds907437540743236] 
\draw  [fill={rgb, 255:red, 38; green, 7; blue, 241 }  ,fill opacity=0.3 ][dash pattern={on 4.5pt off 4.5pt}] (145.54,99.13) -- (145.04,113.63) -- (145.04,113.63) -- (119.5,131) -- (119.5,116) -- cycle ;
%Straight Lines [id:da22714144298587968] 
\draw    (241,116) -- (241,206) ;
%Straight Lines [id:da2652807515835355] 
\draw    (241,206) -- (315.5,131) ;
%Straight Lines [id:da13306667181949583] 
\draw    (285.89,43.2) -- (241,116) ;
%Straight Lines [id:da6300284992572426] 
\draw    (341.04,113.63) -- (351.36,111.27) ;
%Straight Lines [id:da3392847642540684] 
\draw    (241,206) -- (285.89,190.2) ;
%Straight Lines [id:da533363562198773] 
\draw    (241,116) -- (315.5,116) ;
%Straight Lines [id:da2931883424385523] 
\draw    (352.36,82.27) -- (341.54,99.13) ;
%Straight Lines [id:da16098168886509545] 
\draw    (285.89,43.2) -- (352.36,82.27) ;
%Straight Lines [id:da056288319845834045] 
\draw    (352.36,82.27) -- (351.36,111.27) ;
%Straight Lines [id:da06103831913705737] 
\draw    (351.36,111.27) -- (285.89,190.2) ;
%Straight Lines [id:da9089495140040292] 
\draw  [dash pattern={on 4.5pt off 4.5pt}]  (285.89,43.2) -- (285.89,190.2) ;
%Shape: Polygon [id:ds6778178157732934] 
\draw  [fill={rgb, 255:red, 38; green, 7; blue, 241 }  ,fill opacity=0.3 ] (341.54,99.13) -- (341.04,113.63) -- (341.04,113.63) -- (315.5,131) -- (315.5,116) -- cycle ;
%Straight Lines [id:da40782261845796597] 
\draw    (479.5,130) -- (494.83,115) ;
%Straight Lines [id:da4822494790624666] 
\draw    (494.72,115) -- (505.04,112.63) ;
%Straight Lines [id:da018735650972447115] 
\draw    (479.5,115) -- (494.72,115) ;
%Straight Lines [id:da38080467695343656] 
\draw    (505.54,98.13) -- (494.72,115) ;
\draw [shift={(494.72,115)}, rotate = 122.69] [color={rgb, 255:red, 0; green, 0; blue, 0 }  ][fill={rgb, 255:red, 0; green, 0; blue, 0 }  ][line width=0.75]      (0, 0) circle [x radius= 1.34, y radius= 1.34]   ;
%Shape: Polygon [id:ds5723571511190083] 
\draw  [fill={rgb, 255:red, 38; green, 7; blue, 241 }  ,fill opacity=0.3 ] (505.54,98.13) -- (505.04,112.63) -- (505.04,112.63) -- (479.5,130) -- (479.5,115) -- cycle ;
%Shape: Polygon [id:ds8404820265905972] 
\draw  [fill={rgb, 255:red, 38; green, 7; blue, 241 }  ,fill opacity=0.3 ] (429.54,99.13) -- (429.04,113.63) -- (429.04,113.63) -- (403.5,131) -- (403.5,116) -- cycle ;
%Straight Lines [id:da30842066092349496] 
\draw    (164.06,131.84) -- (208.06,131.84) ;
\draw [shift={(210.06,131.84)}, rotate = 180] [color={rgb, 255:red, 0; green, 0; blue, 0 }  ][line width=0.75]    (10.93,-3.29) .. controls (6.95,-1.4) and (3.31,-0.3) .. (0,0) .. controls (3.31,0.3) and (6.95,1.4) .. (10.93,3.29)   ;

% Text Node
\draw (262,209.9) node [anchor=north west][inner sep=0.75pt]    {$M_{+}$};
% Text Node
\draw (491,209.9) node [anchor=north west][inner sep=0.75pt]    {$M_{-}$};
% Text Node
\draw (359,210.4) node [anchor=north west][inner sep=0.75pt]    {$Y=Z/S^{1}$};
% Text Node
\draw (118,89.4) node [anchor=north west][inner sep=0.75pt]    {$Z$};
\end{tikzpicture}
}
    \caption{The pieces $M_+, Y, M_-$. }
    \label{fig:broken manifold}
\end{figure}

Given the following data: 
\begin{itemize}
    \item a broken manifold $\mathfrak{X}$; 
    \item a Lagrangian torus fiber $L$ in $M_+$;
    \item a generic cone direction $\eta_{gen}\in \mathfrak{t}\cong \R^3$;  
    \item a system of
perturbations $\underline{\mathfrak{p}} = \left\{ \left( J_{\tilde{\Gamma}}, F_{\tilde{\Gamma}}\right)\right\}_{\tilde{\Gamma}}$, 
where, to each type $\tilde{\Gamma}$ of a deformed treed holomorphic disc, we associate
a domain-dependent almost complex structure $J_{\tilde{\Gamma}}$ and a Morse function $F_{\tilde{\Gamma}}$ on the Lagrangian intersections, compatible with  breaking of gradient lines, bubbling off holomorphic spheres or disks, weights of edges going to $0$ or $1$, and forgetting boundary input edges; and
\item a local system $\rho \in \Hom(\pi_1(L),U_{\Lambda})$;
\end{itemize} 
one can associate a
\textit{split $A_{\infty}$ algebra} structure 
$\{ \m_k\}$ on 
  \[ CF_{split}(L, \underline{\mathfrak{p}}) := CF^{geom}(L,\underline{\mathfrak{p}}) \,\oplus\, \Lambda_0\,
    x^{\greyt}[1] \,\oplus\, \Lambda_0\, x^{\whitet}. \] 

Here $CF^{geom}(L,\underline{\mathfrak{p}})$ is the vector space generated by the set of critical points of the Morse function on $L$ over $\Lambda_0$. 
The $A_{\infty}$ algebra structure is given
by counting rigid split treed disks in $\mathfrak{X}$ with boundary on $L$. 
Here a split treed disk is a degeneration of a broken treed disk along the generic cone condition, as follows.  
A \textit{broken} treed disk consists of surface components mapping to $M_+$ and $M_-$ satisfying pseudoholomorphic equations, along with treed components mapping to $L$ satisfying Morse gradient-flow equations, such that the evaluation maps satisfy the matching condition at the nodes connecting the surface parts in $M_+$ and $M_-$. 
A split treed disk is obtained by shifting the matching condition by $\lambda \eta_{gen}$ and taking the limit $\lambda\to \infty$. 

By \cite{VWSplit} Theorem 1.1, the split $A_{\infty}$ algebra $ CF_{split}(L, \underline{\mathfrak{p}})$ is $A_{\infty}$-homotopy equivalent to the broken $A_{\infty}$ algebra $ CF_{brok}(M,L)$, which is in turn $A_{\infty}$-homotopy equivalent to the ordinary $A_{\infty}$ algebra $CF(L)$ by \cite{VW} Theorem 1.6. 

We refer to \cite{VWSplit} Section 5.2 for a detailed description of the $A_{\infty}$ algebra. 
By \cite{VW} Section 10.3, $x^{\whitet}$ is a strict unit of $CF(L,\underline{\mathfrak{p}})$. 

\section{The leading order potential}
\label{section potential}
In this section, we prove Proposition \ref{prop leading order potential}, which gives the leading order potential for each manifold $M$ constructed in Section \ref{section Construction of Woodward's examples}, using the methods of \cite{CharestWoodward}, \cite{VW}, \cite{VWSplit}, and \cite{FOOOI}.

\begin{prop}[Leading order potential of $M$]
\label{prop leading order potential}
Let $M$ be a manifold of the form \autoref{defn of M} with parameters $r,\lambda_1,\lambda_2,\lambda_3, n$ satisfying
\autoref{n>2},
\autoref{relations between lambda and r}, and \autoref{cannot cut the S3 vertex}. 
Let $L=\Phi^{-1}(u)$ for some $u=(u_1,u_2,u_3)$ in the interior of $\Delta$ equipped with a relatively spin structure and a local system $\tau\in \Hom \lrp{\pi_1(L),  U_{\Lambda}}$. 
Then the corresponding split $A_{\infty}$ algebra associated to $L$ is weakly unobstructed. 
Moreover, after identifying $\bigcup\limits_{u\in \interior \Delta} \Hom \lrp{ \pi_1(\Phi^{-1}(u)), U_{\Lambda}}$   
as a subset of $\Lambda^3$ via 
\begin{equation}
\label{coordinates on union of local systems}
 (u, \tau ) 
\mapsto (x,y,z) =\lrp{(\exp \tau )(e_1^*)T^{u_1}, (\exp \tau )(e_2^*)T^{u_2}, (\exp \tau )(e_3^*)T^{u_3}},    
\end{equation} 
the leading order potential function takes the form 
\begin{align}
\label{leading potential}
 P 
 & = yT^{-\lambda_3}+y^{-1}T^{\lambda_2} + xz^{-1}  + y^{-1}z + xT^{-\lambda_2} + x^{-1}y^{-n}T^{r+n\lambda_3}. 
\end{align}
\end{prop}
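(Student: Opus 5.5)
The strategy is to use the cut along $Q=\{u_2-u_1+\epsilon=0\}$ to reduce the computation to toric Floer theory on $M_+$, plus a bookkeeping argument for split disks that reach the divisor $Y$.

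\textbf{Step 1: $M_+$ is toric.} Since $\Phi(L)\in Q_>$, the piece $M_+=M_>\cup Y$ does not contain the non-toric fiber $\Phi^{-1}(\lambda_2,\lambda_2,\lambda_2)$, because that point satisfies $u_1-u_2=0<\epsilon$. So the restriction of the Gelfand--Cetlin $T^3$-action makes $M_+$ a compact symplectic toric manifold. Its moment polytope is
\[ \Delta_+=\Delta\cap\{u_1-u_2\geq \epsilon\}. \]
Using Lemma \ref{Delzant lemma}, I would check that $\Delta_+$ is Delzant. Its facets are the six facets of $\Delta$ given by
\[ u_2-\lambda_3\geq 0,\quad \lambda_2-u_2\geq0,\quad u_1-u_3\geq 0,\quad u_3-u_2\geq 0,\quad u_1-\lambda_2\geq 0,\quad r+n\lambda_3-u_1-nu_2\geq 0, \]
together with the new facet $u_1-u_2-\epsilon\geq 0$ coming from $Y$.

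\textbf{Step 2: the six leading terms.} By Cho--Oh \cite{CO} and \cite{FOOOI}, for a generic $T^3$-invariant complex structure on $M_+$ each facet $\ell_j(u)=\langle v_j,u\rangle-c_j\geq 0$ contributes exactly one Maslov index two disc class $\beta_j$ through a generic point of $L$. Its count is $\pm1$ (sign fixed by the standard spin structure) and its weight is $T^{\ell_j(u)}$ times the holonomy of $\tau$ along $\partial\beta_j=v_j$. In the coordinates \eqref{coordinates on union of local systems} these weights are
\[ yT^{-\lambda_3},\quad y^{-1}T^{\lambda_2},\quad xz^{-1},\quad y^{-1}z,\quad xT^{-\lambda_2},\quad x^{-1}y^{-n}T^{r+n\lambda_3}, \]
which are the six terms of \eqref{leading potential}. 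These discs avoid $Y$, so they are split treed disks that lie entirely in $M_>$. By the compatibility of the perturbation system $\underline{\mathfrak{p}}$ with forgetting edges, they are counted with the same multiplicities in $CF_{split}(L,\underline{\mathfrak{p}})$.

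\textbf{Step 3: discs meeting $Y$ (main obstacle).} The main difficulty is the remaining split disks, whose $M_+$-part meets $Y$ and is then matched, after the limit $\lambda\eta_{gen}\to\infty$, with components in $M_-$ or in the neck $M_Q$. The basic disc $\beta_Y$ through the new facet has symplectic area $u_1-u_2-\epsilon$, but $\beta_Y$ alone is not a relative class of $M$: it must be capped in $M_-$.

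I would use the dimension and Maslov index formulas of \cite{VW, VWSplit}, in which the index of a split disk is the sum of the $M_+$-index and the indices of the capping pieces, corrected by the contact orders with $Y$. The argument then runs as follows.
\begin{enumerate}[(a)]
\item Rigid split disks contributing to $\m_0$ have total index two.
\item Each $M_-$ cap has nonnegative index, because $M_-$ is monotone-like near the singular fiber, being a neighborhood of the $U(2)$-stratum cut.
\item Combining (a) and (b), any configuration that meets $Y$ has energy strictly larger than the minimum of the six facet areas above, uniformly on compact subsets of $\interior\Delta$.
\end{enumerate}
Alternatively, I would try to show that such configurations are matched with $x^{\greyt}$ and $x^{\whitet}$ terms in a way that produces only higher-order corrections. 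Either way, they are absorbed into the higher order part of the potential, and only the leading order potential is asserted.

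\textbf{Step 4: weak unobstructedness.} Choosing $T^3$-equivariant (or at least $T_Q$-compatible) perturbations, the Maslov index of every nonconstant disc is at least $2$, by the Charest--Woodward index bounds for generic cone directions. So the outputs of $\m_0(1)$ have degree zero. Degree and torus symmetry then force $\m_0(1)=P\cdot x^{\whitet}$, using that $x^{\whitet}$ is the strict unit by \cite{VW} Section 10.3. This is the FOOO argument \cite{FOOOI} transported to the split setting.
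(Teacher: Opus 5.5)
Steps 1 and 2 agree with the paper: you get the six facet discs of the toric piece $M_+$ that avoid $Y$, as in \cite{VWSplit} Proposition 7.5. The gap is in Step 3, which you flag as the main obstacle and leave unresolved; your outline would not close it. Claim (b) comes with no argument, since ``monotone-like'' is not a property you have established for $M_-$. Even granting (b), (c) does not follow. A nonnegativity statement for the caps does not exclude rigid configurations in which an $M_+$-disc meeting $Y$ (for instance $\beta_Y$) is matched to caps in $M_-$. Moreover, an index inequality alone gives no lower bound on symplectic area. So nothing in your argument prevents split discs through $Y$ from contributing terms of lower valuation than the six facet terms, which is exactly what the proposition must rule out. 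Your alternative, matching such configurations with $x^{\greyt}, x^{\whitet}$ terms, is not an argument.

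The missing idea is a concrete identification of $M_-$. In Lemma \ref{M- is a quadric} the paper observes that $M_-$ is a multiplicity-free, torsion-free, transversal $U(2)$-manifold whose Kirwan polytope is the triangle $\Delta_-$. A symplectic cut of $T^*SU(2)$ has the same polytope, so by Woodward's classification $M_-$ is the quadric threefold, with $Y$ a hyperplane section; hence $c_1(TM_-)=3\,PD[Y]$. Consider sphere components in $M_-$ of class $A$ meeting $Y$ at $l$ prescribed points with total contact order $s=A\cdot Y$. Their virtual dimension is $2c_1(A)+2l-(6l+2s-2l)=4s-2l\ge 2l>0$. So no split disc with an $M_-$ part is rigid at all (Lemma \ref{lem no rigid spheres in M-}), and only then does the energy comparison among the remaining discs in $M_+\setminus Y$ suffice. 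Your energy route would need the same information, namely the Chern and symplectic classes of $M_-$ relative to $Y$, to bound cap areas from below.

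Step 4 is also not right in the split model. Disc counts do not produce the strict unit, so $\m_0(1)$ is not a multiple of $x^{\whitet}$, and the Maslov lower bound you invoke is not available for free. The paper instead translates the $M_+$-components by $T$, showing that the moduli space with the output forgotten has dimension $\ge 3$. This forces $\m_0(1)=w\,x^{\blackt}$. It then uses $\m_1(x^{\greyt})=x^{\whitet}-x^{\blackt}$ and $\m_d(x^{\greyt},\dots,x^{\greyt})=0$ for $d\ge 2$ to show that $w\,x^{\greyt}$ is a weak Maurer--Cartan element (Lemma \ref{lem weak Maurer-Cartan}).
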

The proof of Proposition \ref{prop leading order potential} is based on the study \cite{CharestWoodward, VW, VWSplit} of the behavior of pseudoholomorphic curves under symplectic cuts \cite{Cuts}, which is a generalization of relevant study (e.g. \cite{BEHWZ}, \cite{IonelParkerRelativeGW}) in Gromov-Witten theory and symplectic field theory.  

We follow this approach to prove Proposition \ref{prop leading order potential}. 

A rigid split disk in $\mathfrak{X}$ with boundary on $L$ has to take one of the following forms.
\begin{enumerate}[1)]
    \item \label{type 1} The rigid split disk is a nodal treed holomorphic disk in $M_+\setminus Y$. 
    \item \label{type 2} The rigid split disk is obtained from a broken disk $(u_+,u_-)$ by deforming an edge matching condition along the cutting hypersurface.  
    Here $u_+, u_-$ in the broken disk data are pseudoholomorphic curves in $M_+,M_-$, respectively, such that $u_+,u_-$ satisfy at least one matching condition  at a node on $Y$, and the irreducible components of $u_-$ are spheres.
\end{enumerate} 

\begin{lem}\label{lem no rigid spheres in M-}
 A rigid split disk of type \ref{type 2} does not contribute to the potential function of $M$.    
\end{lem}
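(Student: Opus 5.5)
Under the generic cone condition, I plan to show that any rigid split disk with a nontrivial piece in $M_-$ either has too high an index to be rigid or is energetically excluded. The argument is a dimension count in $M_-$, which is a toric manifold. Its moment polytope is the piece of $\Delta$ cut out by $u_2-u_1+\epsilon\geq 0$ near the vertex $(\lambda_2,\lambda_2,\lambda_2)$; the shaded region in Figure~\ref{fig:broken manifold} is its image.

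First I would check that $M_-$ is a smooth compact toric manifold. The cut removes the singular Gelfand--Cetlin fiber over $(\lambda_2,\lambda_2,\lambda_2)$? No: that point lies in $\{u_1=u_2\}$, hence in $Q_<$, i.e.\ in $M_-$. So I must first identify the structure of $M_-$. Its polytope is bounded by the facets $u_1=\lambda_2$ (from $\lambda_2\le u_1$), $u_2=\lambda_2$ (from $u_2\le\lambda_2$), $u_3$-facets, and the cut facet $Y$. Using the fact that the vertex $(\lambda_2,\lambda_2,\lambda_2)$ corresponds to a Lagrangian $S^3$-fiber, I would argue that $M_-$ is a neighbourhood of the fiber over that vertex, compactified by $Y$. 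Concretely it should be a symplectic-cut model of $T^*S^3$-type neighbourhood capped off, or equivalently a toric-degeneration-compatible space in which the only spheres meeting $Y$ have controlled first Chern number.

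Next I would compute, for a nonconstant sphere component $v$ of $u_-$ meeting $Y$ with tangency orders $\ell_j$, its expected dimension in the relative (split) moduli space. This is $\dim M_- + 2c_1(v) - 2\sum_j \ell_j + 2(\#\text{markings}) - 6$, minus the matching conditions on $Y$, which are of codimension $\dim Y$ each after the generic cone shift. The key input is the adjunction-type identity $c_1(TM_-)=c_1(TM_-(-\log Y))+[Y]^{PD}$. Since $v\cdot Y=\sum\ell_j$, this gives $c_1(v)-\sum_j\ell_j=c_1^{\log}(v)$. I would show $c_1^{\log}(v)\geq 1$ for every nonconstant sphere, by writing $c_1^{\log}$ as the sum of the toric divisors other than $Y$ and noting that a sphere in $M_-$ intersecting $Y$ must meet some other boundary divisor positively. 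In the region near $u_1=u_2$ this is the facet $u_1=\lambda_2$ or $u_2=\lambda_2$.

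Then I would count the index of the glued broken disk. Its Maslov index is $\mu(u_+)+2c_1^{\log}(u_-)$ plus the relative contribution. The potential only counts disks of Maslov index $2$, i.e.\ rigid with one output. The piece $u_+$ must itself have Maslov index at least $2\sum\ell_j$ relative to $Y$, since $\mu=2\sum_{D}D\cdot u_+$ for toric disks, counting $Y$. Adding $2c_1^{\log}(u_-)\geq 2$ then forces total index at least $4$, so the configuration cannot be rigid in the index-$2$ count and does not contribute to $\m_0$. The main obstacle is the first step: the vertex $(\lambda_2,\lambda_2,\lambda_2)$ is non-toric, so $M_-$ is not toric near it. I would try to handle this by noting that after the generic cone shift the sphere components split into pieces in $Y$ (toric, a quotient away from the bad vertex) and in $M_-$. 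For the latter I would use the $T_Q$-equivariant structure and monotonicity of the local $S^3$-neighbourhood, as in Nishinou--Nohara--Ueda's analysis of the Gelfand--Cetlin vertex, to show $c_1^{\log}\geq 1$ still holds.
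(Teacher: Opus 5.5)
There is a genuine gap, in two places that feed into each other.

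First, you justify $c_1^{\log}(v)\ge 1$ by writing $c_1^{\log}$ as a sum of toric divisors, but $M_-$ has no such toric structure. As you yourself note, the vertex $(\lambda_2,\lambda_2,\lambda_2)$ lies in $M_-$ and its fiber is a Lagrangian $S^3$. In fact $M_-$ is a smooth quadric threefold, which is not toric. The appeal to the $T_Q$-structure and to Nishinou--Nohara--Ueda does not supply a replacement argument.

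Second, and more seriously, even granting $c_1^{\log}(u_-)\ge 1$, your index count does not close. Gluing across $Y$ gives $c_1(\beta)=c_1(u_+)+c_1(u_-)-2s$ with $s=u_+\cdot Y=u_-\cdot Y$. Hence
\[
\mu(\beta)=\bigl(\mu(u_+)-2s\bigr)+2c_1^{\log}(u_-),
\]
not $\mu(u_+)+2c_1^{\log}(u_-)$. The first term equals $2\sum_{D\neq Y}D\cdot u_+\ge 0$, and it vanishes, for instance, when $u_+$ is the basic disk through the facet $Y$. So $c_1^{\log}(u_-)\ge 1$ only yields $\mu(\beta)\ge 2$, which is exactly the Maslov index you need to rule out.

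The missing input is a global identification of $M_-$ strong enough to give $c_1^{\log}(u_-)\ge 2$. Your guess that $M_-$ is a capped-off $T^*S^3$ is on the right track. In Lemma~\ref{M- is a quadric} the paper shows that $M_-$ is the quadric $Q^3\subset\mathbb{CP}^4$ with $Y$ a hyperplane section. It does this via Woodward's classification of multiplicity-free, torsion-free, transversal $U(2)$-manifolds, matching the Kirwan polytope of $M_-$ with that of a symplectic cut of $T^*SU(2)$. Consequently $c_1(TM_-)=3\,PD[Y]$.

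Then a sphere class $A$ with $A\cdot Y=s$ has $c_1(A)=3s$ and $c_1^{\log}(A)=2s\ge 2$. With this, your route gives $\mu(\beta)\ge 4s\ge 4$ and goes through.

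The paper uses the same fact but argues on the $M_-$ side alone. Spheres of class $A$ with $l$ tangency points of total order $s$ at fixed points of $Y$ have virtual dimension $2c_1(A)+2l-(2s+4l)=4s-2l\ge 2l>0$. So the $M_-$ piece is never rigid, and hence neither is the split disk.
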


\begin{proof} 
Let $A=[u_-]\in H_2(M_-)$. Since the hypersurface $Y$ of $M_-$ is a hyperplane section, $ c_1(TM_-) =3PD_{M_-}[Y]$ according to Lemma \ref{M- is a quadric} below.

Then $u_-$ represents an element of the moduli space $\mathcal{M}^Y(M_-, A, s,l)$ of nodal pseudoholomorphic spheres in $M_-$ of class $A$, with no marked points and $l$ points of tangency with $Y$ at fixed points on $Y$ such that the total order of tangency is $s$. 
Since $\mathcal{M}^Y(M_-, A, s,l)$ has virtual dimension 
\begin{align*}
  &   2c_1(A) + 2\cdot 3 - 6 + 2l-(6l + 2s - 2l)\\ 
  = & 2 \cdot 3s +4l-2s-6l \geq 2l >0,  
\end{align*}
such a disk cannot be rigid and does not contribute to $P$.    
\end{proof}

\begin{lem}[$H^1(L;\Lambda_0)$ are weak Maurer-Cartan]
\label{lem weak Maurer-Cartan}
Every local system $\rho \in \Hom \lrp{\pi_1(L),  U_{\Lambda}}$ defines a weakly unobstructed split $A_{\infty}$ structure on $L$. 
\end{lem}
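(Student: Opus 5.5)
The plan is to adapt the Fukaya--Oh--Ohta--Ono / Cho--Oh argument for toric fibers to the split setting, using the torus symmetry together with a degree argument.

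\textbf{Step 1: reduce to disks in $M_+$.} By Lemma \ref{lem no rigid spheres in M-}, split disks of type \ref{type 2} do not contribute. So for the zeroth-order term $\m_0(1)$ only type \ref{type 1} configurations matter: nodal treed holomorphic disks in $M_+\setminus Y$ with boundary on $L$. The open set $M_>$ lies in the region where the Gelfand--Cetlin $T^3$-action is defined and effective, because $\Phi(L)\in Q_>$ and $(\lambda_2,\lambda_2,\lambda_2)\notin Q_>$. Hence $M_+$ is a toric manifold, obtained by cutting the toric manifold $U$, and $L$ is one of its toric fibers.

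\textbf{Step 2: the $\m_0$ term.} I would choose the perturbation data $\underline{\mathfrak p}$ to be $T^3$-equivariant near $L$, or at least compatible with the cone direction, as in \cite{VW} and \cite{CharestWoodward}. Then every rigid disk contributing to $\m_0$ has Maslov index $2$ and one output. Disks of Maslov index $\le 0$ cannot occur: in a toric manifold every nonconstant holomorphic disk with boundary on a fiber has Maslov index at least $2$ (Cho--Oh \cite{CO}). Sphere bubbles of nonpositive Chern number are excluded in $M_+\setminus Y$ by the divisor/stabilization scheme of Charest--Woodward.

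A rigid Maslov-$2$ disk with one boundary output sweeps out all of $L$ under the torus action. Its output is therefore the fundamental class, or in the Morse model the unique maximum of $F_{\tilde\Gamma}$, which is identified with the strict unit $x^{\whitet}$ (cf.\ \cite{VW} Section 10.3). This gives
\[
\m_0(1) = W(\rho)\, x^{\whitet},
\]
for a scalar $W(\rho)$.

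\textbf{Step 3: include the local system and bounding cochains.} Twisting by $\rho\in\Hom(\pi_1(L),U_\Lambda)$ only multiplies each disk count by the holonomy $\rho(\partial\beta)$, so the output is still proportional to $x^{\whitet}$. To extend this to $b\in H^1(L;\Lambda_0)$, I would use the divisor axiom for the split treed disks: forgettable boundary inputs of degree-one classes reproduce $\exp(\langle b,\partial\beta\rangle)$. This identifies $\rho$ with $\exp b$, consistent with \autoref{coordinates on union of local systems}.

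\textbf{Main obstacle.} The hardest step is justifying that the split moduli spaces admit perturbations that are equivariant, or at least coherent with forgetting boundary inputs, so that the divisor axiom and the ``output equals the maximum'' argument both hold. I would try to cite \cite{VWSplit} Section 5.2, which gives the forgetful compatibility of the perturbation system. The weak unobstructedness then follows exactly as in \cite{FOOOI}.
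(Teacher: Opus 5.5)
\textbf{Gap: the Morse maximum is not the strict unit.} Your torus-sweeping argument shows that any rigid configuration contributing to $\m_0(1)$ must output at the Morse maximum. The paper uses the same mechanism: it translates the $M_+$-components by $T$, with the standard complex structure on $M_+$ justified by \cite{VWSplit} Proposition 7.3.

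The problem is the next step, where you identify that maximum with the strict unit. In the split model $CF_{split}(L,\underline{\mathfrak p})=CF^{geom}(L,\underline{\mathfrak p})\oplus\Lambda_0\,x^{\greyt}[1]\oplus\Lambda_0\,x^{\whitet}$, the Morse maximum $x^{\blackt}$ is a generator of $CF^{geom}$. It is \emph{not} the strict unit $x^{\whitet}$; the two are only cohomologous.

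What your argument actually proves is $\m_0(1)=w\,x^{\blackt}$ with $w\in\Lambda_+$. This is not a multiple of the strict unit, so $b=0$ is not a weak Maurer--Cartan element. The same holds for your $b\in H^1(L;\Lambda_0)$ in Step 3: its curvature is $\sum_\beta e^{\langle b,\partial\beta\rangle}\m_{0,\beta}(1)$, which again lies in $\Lambda\, x^{\blackt}$.

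Appealing to \cite{FOOOI} does not close this. In their model the fundamental cycle itself is the unit. In the treed/Morse model of \cite{CharestWoodward, VW}, a strict unit exists only after adjoining the homotopy-unit generators.

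\textbf{The missing step: correct by the grey generator.} Take $b=w\,x^{\greyt}$; since $w\in\Lambda_+$, the sums below converge. You then need two facts.
\begin{enumerate}
\item $\m_1(x^{\greyt})=x^{\whitet}-x^{\blackt}$, with no positive-area corrections. A rigid positive-area disk with a grey input would, after forgetting that input, leave a configuration of negative expected dimension.
\item $\m_d(x^{\greyt},\dots,x^{\greyt})=0$ for $d\ge 2$, by dimension counting.
\end{enumerate}
Together these give
\[
\sum_{d\ge 0}\m_d(b,\dots,b)=w\,x^{\blackt}+w\,\bigl(x^{\whitet}-x^{\blackt}\bigr)=w\,x^{\whitet}.
\]
This is the weak unobstructedness, and $W(\rho)=w$ is what the paper takes as the value of the potential.

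\textbf{Secondary points.}
\begin{itemize}
\item You do not need the Maslov-index-$2$ restriction, Cho--Oh, or the claim that the stabilization scheme excludes nonpositive-Chern spheres; $M_+$ need not be Fano, so that route is shaky anyway. The translation argument applies uniformly to every rigid configuration, whatever its Maslov index.
\item The equivariance issue you flag as the main obstacle is exactly what the paper delegates to \cite{VWSplit} Proposition 7.3.
\item Step 3 is unnecessary for this statement, which concerns local systems. The holonomy $\rho(\partial\beta)$ simply weights each disk count and does not affect the above.
\end{itemize}
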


\begin{proof} The argument is similar to the proof of \cite{VWSplit} Proposition 7.3. 

Since $L$ is a Lagrangian torus fiber of the toric manifold $M_+$ equipped with a Hamiltonian $T$-action, we have that $L\subset M_+$ is a tropical torus by \cite{VWSplit} Definition 7.1. 
Let $\rho \in \Hom \lrp{\pi_1(L),  U_{\Lambda}}$ be a local system on $L$.

By the proof of \cite{VWSplit} Proposition 7.3, the perturbation datum can be taken so that the almost complex structure on $M_+$ is standard.  
Consider a rigid split disk $[u] \in \Mq_{\widetilde{\Gamma}, \mathrm{red}}^{\mathrm{split}}(L;x_0)$ contributing to $\m_{split}^0(1)$ whose boundary asymptotes to $x_0$. Then $u$ is regular. If we forget the boundary leaf of $u$, we get a split disk $[u']\in \Mq_{\widetilde{\Gamma}', \mathrm{red}}^{\mathrm{split}}(L)$ without output (or input). 

Then $\dim \Mq_{\widetilde{\Gamma}', \mathrm{red}}^{\mathrm{split}}(L)\geq \dim T$, since we can construct a $(\dim T)$-family of reduced split maps $[u_t'] $ by translating the components of $u'$ lying in $M_+$  by $t\in T$ without changing the components in $M_-$.  

Since adding the boundary constraint $x_0$ reduces the expected dimension by the Morse index of $x_0$, 
\[ 0 = \dim \Mq_{\widetilde{\Gamma}, \mathrm{red}}^{\mathrm{split}}(L;x_0) = \dim \Mq_{\widetilde{\Gamma}', \mathrm{red}}^{\mathrm{split}}(L)- i_{\mathrm{Morse}}(x_0). \]
This implies that the Morse index $i_{\mathrm{Morse}}(x_0)$ of $x_0$  satisfies $i_{\mathrm{Morse}}(x_0) =  \dim \Mq_{\widetilde{\Gamma}', \mathrm{red}}^{\mathrm{split}}(L)\geq n= 3$. 
Therefore, $x_0$ has to be the unique maximal critical point $x^{\blackt}$ of the Morse function on $L$.  

Hence,   $\m_{split}^0(1)=w\cdot x^{\blackt}$, where $w$ is in $\Lambda_+$. 
Moreover, if $[u]$ is a positive area rigid disk that contributes to $\m_{split}^1( x^{\greyt})$, then the disk $[u']$ obtained from $u$ by forgetting the input would have index $2$ lower than that of $[u]$. This implies that $[u']$ has negative index, which cannot exist. 
Hence, $\m_{split}^1( x^{\greyt}) = x^{\whitet}-x^{\blackt}$. 

By dimension counting, we also have $\m_{split}^d( x^{\greyt},\ldots, x^{\greyt})=0$ for all $d\geq 2$. 
It follows that $\sum\limits_{d=0}^{\infty} \m_{split}^d
(wx^{\greyt}) = w x^{\whitet}$. In particular, $wx^{\greyt}$ is a weak Maurer-Cartan solution for the split $A_{\infty}$ algebra. Therefore, we obtain the weak unobstructedness of $L$.  
\end{proof}

\begin{proof}[Proof of Proposition \ref{prop leading order potential}]
Let $L = \mu^{-1}(u_1,u_2,u_3)$ and $\rho\in \Hom(\pi_1(L), U_{\Lambda})\cong H^1(L, U_{\Lambda})$ be a local system.
The value of the potential function $W$ at a Lagrangian brane $(L,\rho)$ is defined to be the $w$ associated to $(L,\rho)$ in Lemma \ref{lem weak Maurer-Cartan}. 
 
Let 
\[ x = e^{x_1}T^{u_1}, \quad y = e^{x_2}T^{u_2}, \quad z = e^{x_3}T^{u_3}. \]
Then $x,y,z\in \Lambda_0$. 
With the change of coordinates, we regard the potential function as a map
$W: \bigcup\limits_{u\in \interior \Delta} H^1\lrp{\Phi^{-1}(u), \Lambda_0/(2\pi i \Z)} \to \Lambda$.   
By Lemma \ref{cannot cut the S3 vertex}, the split $A_{\infty}$ algebra counts rigid disks which by Lemma \ref{lem no rigid spheres in M-} has to be of type \ref{type 1}.  By \cite{VWSplit} Proposition 7.5, $M_+\setminus Y$, each prime toric boundary divisor $D_i$ of $M_+\setminus Y$ corresponds to a type \ref{type 1} rigid holomorphic disk $u_i$ of Maslov index two and contributes 
\[ P= yT^{-\lambda_3}+y^{-1}T^{\lambda_2} + xz^{-1}  + y^{-1}z + xT^{-\lambda_2} + x^{-1}y^{-n}T^{r+n\lambda_3} \]
to the potential function $W$. 
Moreover, since the areas of other split disks are larger than at least one of these basic disks and the inward normal vectors generate $H^1(L(u);\Z)$ for all $u\in \interior \Delta$, we conclude that $P$ is the leading order potential of $M$.  
\end{proof}
\begin{lem}
    \label{M- is a quadric}
    $M_-$ is symplectomorphic to a quadric threefold. 
\end{lem}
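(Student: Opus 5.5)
The plan is to identify $M_-$ as a multiplicity-free Hamiltonian $U(2)$-manifold and then match it with a quadric threefold carrying a multiplicity-free $U(2)$-action, using Woodward's classification \cite{WoodwardClassification}. That classification says a multiplicity-free compact Hamiltonian $G$-manifold is determined up to equivariant symplectomorphism by its moment polytope together with its principal isotropy data.

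\textbf{Step 1: $M_-$ is a cut of $\orbit_{\lam}$ alone.} By \eqref{cannot cut the S3 vertex}, for $\epsilon$ small the region $\{u_1-u_2\leq \epsilon\}$ of $\Delta$ does not meet the facet $-u_1-nu_2+r+n\lambda_3=0$ coming from Woodward's cut. Hence $M_-$ coincides with the symplectic cut $(\orbit_{\lam})_{u_1-u_2\leq \epsilon}$ of the coadjoint orbit itself.

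\textbf{Step 2: the cut is $U(2)$-equivariant.} The function $u_1-u_2$ depends only on the $U(2)$-moment map $p_2\circ\iota$: it is the eigenvalue gap of the principal $2\times 2$ minor. Therefore the $T_Q$-cut is a non-abelian cut in the sense of \cite{lmtw}. By Lemma~\ref{Delzant lemma}, $T_Q$ acts freely on the level set, and the cut $M_-$ is again a multiplicity-free $U(2)$-manifold. Its $U(2)$-Kirwan polytope is the triangle
\[
\{\lambda_3\leq u_2\leq \lambda_2\leq u_1,\ u_1-u_2\leq \epsilon\},
\]
and its Gelfand--Cetlin polytope is the piece of $\Delta$ near the vertex $(\lambda_2,\lambda_2,\lambda_2)$ cut off by $u_1-u_2=\epsilon$.

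\textbf{Step 3: the geometric picture.} It is known that $\Phi^{-1}(\lambda_2,\lambda_2,\lambda_2)$ is a Lagrangian $S^3$. A neighborhood $\{u_1-u_2<\epsilon\}$ of it should therefore be a cotangent ball bundle of $S^3$, with the $T_Q$-action, generated by $u_1-u_2$, playing the role of the normalized geodesic flow. The classical fact (Audin, Lerman) is that cutting $D^*S^3$ along its boundary by the geodesic circle action yields the quadric $Q^3\subset\mathbb{CP}^4$. In that picture $Y$ is the hyperplane section at infinity, which is consistent with $c_1=3PD[Y]$.

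\textbf{Step 4: rigorous argument via classification.} To make Step 3 rigorous without building a Weinstein neighborhood by hand, I would instead do the following.
\begin{enumerate}[1)]
\item Exhibit a $U(2)$-action on $Q^3=\{z_0^2=q(z_1,\dots,z_4)\}$, namely $U(2)\subset SO(4)$ acting on $\mathbb{C}^4=\mathbb{R}^4\otimes\mathbb{C}$, equipped with a suitably scaled Fubini--Study form.
\item Compute its Kirwan polytope and check that it is the same triangle, after rescaling by $\epsilon$ and translating by $\lambda_2$.
\item Check that the principal isotropy groups and the Lie-algebra data on each face agree; for this I would again use Lemma~\ref{Delzant lemma}.
\end{enumerate}
Woodward's uniqueness theorem then gives a $U(2)$-equivariant symplectomorphism $M_-\cong Q^3$.

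\textbf{Main obstacle.} I expect the hardest step to be the comparison of the extra invariants on the wall $u_1=u_2$ of the Weyl chamber, where the $U(2)$-action has a non-abelian stabilizer. The moment polytope alone does not decide the classification there. If the classification bookkeeping becomes awkward, I would fall back on the direct geometric route of Step 3: identify $\{u_1-u_2<\epsilon\}$ with $D^*_\epsilon S^3$ through the explicit description of $\Phi^{-1}(\lambda_2,\lambda_2,\lambda_2)$ as the $U(2)$-orbit of a fixed matrix, and then quote the geodesic-flow cut.
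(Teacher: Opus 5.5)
Your proposal follows essentially the same route as the paper. Both realize $M_-$ as a multiplicity-free $U(2)$-manifold via the $U(2)$-equivariant cut, with Kirwan polytope the triangle $\Delta_-$; both produce a model with the same polytope and apply Woodward's classification. The one difference is the model: the paper uses precisely your Step~3 picture, the cut of $T^*SU(2)\cong T^*S^3$ under $U(2)\cong (SU(2)\times U(1))/\Z_2$, whose polytope it computes explicitly and which it then identifies with $Q^3$ by citing \cite{Tehrani}, rather than computing directly on $Q^3$ with $U(2)\subset SO(4)$.

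The wall issue you flag is handled in the paper by checking that both spaces are torsion-free and transversal (for $M_-$ these properties are inherited under the cut by \cite{WoodwardClassification} Theorem 8.3). Then \cite{WoodwardClassification} Theorem 1.3 applies once the Kirwan polytopes agree. Note also that the ``polytope plus principal isotropy'' classification you quote holds without transversality by Knop's theorem \cite{KnopClassification}, and both principal isotropy groups here are trivial, so no extra wall data is needed.
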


\begin{proof} 
Note that $M_-$ is obtained from the multiplicity-free, torsion-free, and transversal $U(2)$-manifold\footnote{We refer the reader to \cite{WoodwardClassification} for the definitions of ``torsion-free" and ''transversal". } $M$ by the symplectic cut as in \cite{WoodwardClassification} Theorem 8.3 such that the $U(2)$-Kirwan polytope of $M_-$ is the clean subpolytope (as defined on page 30 of \cite{WoodwardClassification})
\[ \Delta_-=\operatorname{conv}\{(\lambda_2, \lambda_2), (\lambda_2+\epsilon, \lambda_2), (\lambda_2, \lambda_2-\epsilon)\}
\] 
of the Kirwan polytope of $M$. 
Hence, $M_-$ is a multiplicity-free, torsion-free, and transversal $U(2)$-manifold by \cite{WoodwardClassification} Theorem 8.3. 
\begin{figure}
 \begin{center}
  
\tikzset{every picture/.style={line width=0.75pt}} %set default line width to 0.75pt  
\resizebox{0.45\textwidth}{!}{     
\begin{tikzpicture}[x=0.75pt,y=0.75pt,yscale=-1,xscale=1]
%uncomment if require: \path (0,275); %set diagram left start at 0, and has height of 275

%Straight Lines [id:da8815415933271029] 
\draw    (294.39,139.4) -- (405.61,30.6) ;
\draw [shift={(407.75,28.5)}, rotate = 135.63] [fill={rgb, 255:red, 0; green, 0; blue, 0 }  ][line width=0.08]  [draw opacity=0] (5.36,-2.57) -- (0,0) -- (5.36,2.57) -- cycle    ;
\draw [shift={(292.25,141.5)}, rotate = 315.63] [fill={rgb, 255:red, 0; green, 0; blue, 0 }  ][line width=0.08]  [draw opacity=0] (5.36,-2.57) -- (0,0) -- (5.36,2.57) -- cycle    ;
%Shape: Axis 2D [id:dp587501501765215] 
\draw  (156,224) -- (192,224)(159.6,191.6) -- (159.6,227.6) (185,219) -- (192,224) -- (185,229) (154.6,198.6) -- (159.6,191.6) -- (164.6,198.6)  ;
%Straight Lines [id:da6839358209768817] 
\draw    (159.6,224) -- (179.25,202.48) ;
\draw [shift={(180.6,201)}, rotate = 132.4] [color={rgb, 255:red, 0; green, 0; blue, 0 }  ][line width=0.75]    (10.93,-3.29) .. controls (6.95,-1.4) and (3.31,-0.3) .. (0,0) .. controls (3.31,0.3) and (6.95,1.4) .. (10.93,3.29)   ;
%Shape: Axis 2D [id:dp89756392840526] 
\draw  (333,227) -- (369,227)(336.6,194.6) -- (336.6,230.6) (362,222) -- (369,227) -- (362,232) (331.6,201.6) -- (336.6,194.6) -- (341.6,201.6)  ;
%Straight Lines [id:da5742211121899803] 
\draw    (350,85) -- (350,130) ;
%Straight Lines [id:da49977514027832914] 
\draw    (350,85) -- (396.72,85.32) ;
%Straight Lines [id:da15936048033078376] 
\draw    (396.72,85.32) -- (350,130) ;
%Straight Lines [id:da757860589608266] 
\draw    (172.5,128) -- (217.84,97.94) ;
%Straight Lines [id:da8628190599982458] 
\draw    (217.5,97.94) -- (248.02,93.19) ;
%Straight Lines [id:da40799672411632426] 
\draw    (172.5,97.94) -- (217.5,97.94) ;
%Straight Lines [id:da22720240047142504] 
\draw    (249.5,64.13) -- (217.5,97.94) ;
%Shape: Polygon [id:ds8782068790824421] 
\draw   (249.5,64.13) -- (248.02,93.19) -- (248.02,93.19) -- (172.5,128) -- (172.5,97.94) -- cycle ;

% Text Node
\draw (183.6,189.4) node [anchor=north west][inner sep=0.75pt]  [xscale=0.8,yscale=0.8]  {$u_{1}$};
% Text Node
\draw (197.6,214.4) node [anchor=north west][inner sep=0.75pt]  [xscale=0.8,yscale=0.8]  {$u_{2}$};
% Text Node
\draw (157.6,169.4) node [anchor=north west][inner sep=0.75pt]  [xscale=0.8,yscale=0.8]  {$u_{3} \ $};
% Text Node
\draw (373.6,220.4) node [anchor=north west][inner sep=0.75pt]  [xscale=0.8,yscale=0.8]  {$u_{1}$};
% Text Node
\draw (340.6,185.4) node [anchor=north west][inner sep=0.75pt]  [xscale=0.8,yscale=0.8]  {$u_{2} \ $};

\end{tikzpicture}
}  
\end{center}
 
    \caption{$M_-$ and its $U(2)$-Kirwan polytope}
    \label{fig: Kirwan polytope of M+}
\end{figure}
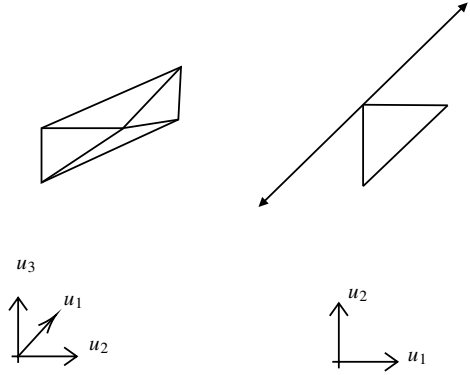

We now identify another multiplicity-free, torsion-free, and transversal $U(2)$-manifold $Q$ with the same Kirwan polytope as $M_-$, by extending the arguments in \cite{WoodwardClassification} Theorem 8.5. 
Let us identify $U(1)$ with 
\[ \set{\begin{pmatrix}
    \lambda & 0 \\
    0 & \lambda^{-1}
\end{pmatrix}}{\lambda\in U(1)}\subset SU(2)  \]
and $T^*SU(2)$ with $SU(2)\times \mathfrak{su}(2)^*$ via $(B,\xi)\mapsto (B, dL_B|_{e}(\xi))$, where $\xi\in T_B^*SU(2)$, and $L_B$ is the left multiplication by $B$. 

There is a Lie group isomorphism $U(2)\cong (SU(2)\times U(1))/\Z_2$ induced by 
\[
SU(2)\times U(1) \to U(2), 
\quad  
\left(A_0, \begin{pmatrix}
    \lambda & 0 \\
    0 & \lambda^{-1}
\end{pmatrix}\right) \mapsto \lambda^2 A_0.    
\] 
We identify the positive Weyl chamber of the Cartan subalgebra $\mathfrak t^*$ of $U(2)$ with
\[
\mathfrak t_+^* \;\cong\; (\mathbb{R}_{\ge 0})^2,
\]
the positive Weyl chamber of the Cartan subalgebra $\mathfrak t_{\mathfrak{su}(2)}^*$ of $\mathfrak{su}(2)$ with
\[
\mathfrak t_{\mathfrak{su}(2),+}^* \;\cong\; \mathbb{R}_{\ge 0}\,(1,-1),
\]
and the positive Weyl chamber of the Cartan subalgebra $\mathfrak t_{\mathfrak{u}(1)}^*$ of $\mathfrak{u}(1)$ with
\[
\mathfrak t_{\mathfrak{u}(1),+}^* \;\cong\; \mathbb{R}_{\ge 0}\,(1,1).
\]
Here we use the standard identification $\mathfrak t^* \cong \mathbb{R}^2$ coming from the diagonal Cartan subalgebra of $U(2)$.

Following \cite{WoodwardClassification} proof of Theorem 8.5, we define the \textit{left} action of $SU(2)$ on $SU(2)\times \mathfrak{su}(2)^*$ by 
\[ A\cdot (B,v) = (A B, \Ad_A^* v) \] 
and the \textit{right} action of $U(1)\subset SU(2)$ on $SU(2)\times \mathfrak{su}(2)^*$ by 
\[ C\cdot (B,v) = (BC, \Ad_{C^{-1}}^*(v)).  
\] 
We may take the moment maps for the left and right actions to be 
 
\[ 
\Phi_L: SU(2)\times \mathfrak{su}(2)^* \to \mathfrak{su}(2)^*, \qquad   (B,v)\mapsto \Ad_{B}^*(v)  
\]
and 
\[ 
\Phi_R: U(1)\times \mathfrak{su}(2)^* \to \mathfrak{u}(1)^*, \qquad   (B,v)\mapsto \pi(v)+(\lambda_2,\lambda_2),  \]
respectively, 
where $\pi: \mathfrak{su}(2)^*\to \mathfrak{u}(1)^*$ is the transpose of the inclusion map $U(1)\to SU(2)$.

The product action induces an action of $U(2)$ on $T^*SU(2)$ with a moment map 
\[
\Phi_{prod}: SU(2)\times \mathfrak{su}(2)^* \to \mathfrak{su}(2)^* \oplus \mathfrak{u}(1)^* \cong \mathfrak{u}(2)^*, \qquad   (B,v)\mapsto  (\Ad_{B}^*(v) , \pi(v)+\lambda_2(1,1) ) . 
\]
Fix $a\geq 0$. 
Then we have 
\begin{align*}
 & \Phi_{prod}(SU(2)\times \mathfrak{su}(2)^*)\cap \mathfrak{t}_+^*  \\
 = & \left \{ (w+(\lambda_2,\lambda_2), a) \in \left(\mathfrak{t}_{\mathfrak{u}(1), +}^*+(\lambda_2,\lambda_2)\right)  \oplus\mathfrak{t}_{\mathfrak{su}(2), +}^*  \;\middle|\; w \in \pi(\Ad_{SU(2)}^*(a) 
\right \}.       
\end{align*}

Then the $U(2)$-Kirwan polytope of $T^*SU(2)$ becomes the lower right quadrant translated by $(\lambda_2, \lambda_2)$: 
\[ \{ w\cdot (1,1)+a(1,-1) \mid a \geq 0, |w|\leq a\} + (\lambda_2, \lambda_2). \]

The symplectic cut $Q$ of $T^*SU(2)$ along $\{a=\epsilon/\sqrt{2}\}$ has $\Delta_-$ as its Kirwan polytope. 
Since the $U(2)$-action on $T^*SU(2)$ is a multiplicity-free, torsion-free, transversal proper Hamiltonian action, it induces a $U(2)$-action on $Q$ with the same properties by  \cite{WoodwardClassification} proof of Theorem 8.3. 

By Woodward's classification theorem (\cite{WoodwardClassification} Theorem 1.3), $M_-$ is symplectomorphic to $Q$, and hence to a quadric hypersurface $Q^3$ in $\proj^4$. (See for example \cite{Tehrani} Section 2.1.)
\end{proof}

\section{Critical points of the potential function}
\label{section Critical points of the potential}
In the study of Lagrangian Floer theory, a central question is whether one can provide a collection of generators for the Fukaya category of a given symplectic manifold. 
In many important cases, such as the compact toric Fano manifolds \cite{FOOOI}, \cite{EvansLekili}, such a collection of generators is often determined by the critical points of the potential function.  
In this section, we apply the methods in \cite{FOOOI} to identify a Lagrangian with non-trivial Floer cohomology by considering the critical points of the potential function.

\begin{prop}[Critical points of $W$]
\label{Proposition critical points of po}
Let $M$ be a manifold of the form \autoref{defn of M} with parameters $r,\lambda_1,\lambda_2,\lambda_3, n$ satisfying 
\autoref{n>2},
\autoref{relations between lambda and r}, and $\dfrac{\lam_2-\lam_3}{r-\lam_2}<\dfrac{1}{n+1}$. Then there are $6$ critical points of $W$ with valuation 
\begin{equation}
      u_0= \lrp{\frac{4r-(2n-1)\lam_2 + (2n+1)\lam_3}{6}, \frac{\lam_2+\lam_3}{2} , \frac{2r+(2-n)\lam_2+(n+2)\lam_3}{6}}.   
\label{valuation of crit points}
\end{equation} 
\end{prop}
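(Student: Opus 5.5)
The plan is to find the critical points of the leading order potential $P$ from Proposition \ref{prop leading order potential}, show they are nondegenerate at the leading order, and then lift them to genuine critical points of $W$ by a Newton iteration over $\Lambda_0$. This follows the strategy of \cite{FOOOI} for toric manifolds that are not Fano. Set $c=r+n\lambda_3$. The logarithmic derivatives of $P$ give the system
\begin{align*}
z\,\partial_z P &= -xz^{-1}+y^{-1}z = 0,\\
x\,\partial_x P &= xz^{-1}+xT^{-\lambda_2}-x^{-1}y^{-n}T^{c}=0,\\
y\,\partial_y P &= yT^{-\lambda_3}-y^{-1}T^{\lambda_2}-y^{-1}z-nx^{-1}y^{-n}T^{c}=0.
\end{align*}
The first equation is exact and gives $z^2=xy$.

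\textbf{Step 1: valuations.} Write $\val(x,y,z)=(u_1,u_2,u_3)$ and assume the dominant balances are
\[
yT^{-\lambda_3}\sim y^{-1}T^{\lambda_2}
\quad\text{and}\quad
xz^{-1}\sim x^{-1}y^{-n}T^{c}.
\]
Together with $2u_3=u_1+u_2$ these give
\[
u_2=\tfrac{\lambda_2+\lambda_3}{2},\qquad 3u_3=(2-n)u_2+c,\qquad u_1=2u_3-u_2,
\]
which is exactly $u_0$ in \eqref{valuation of crit points}. I will then verify two things.

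First, I check that $u_0\in\interior\Delta$. This requires
\[
\lambda_3<u_2<\lambda_2,\quad u_3<u_1,\quad u_3>u_2,\quad u_1>\lambda_2,\quad u_1+nu_2<c.
\]
Second, I check that the discarded terms are strictly of higher order:
\[
u_3<\lambda_2 \ (\text{so } xz^{-1} \text{ dominates } xT^{-\lambda_2}),\qquad -u_2+u_3>\lambda_3\ \text{and}\ c-u_1-nu_2>\lambda_3.
\]
The second set says that $y^{-1}z$ and $x^{-1}y^{-n}T^c$ do not interfere with the first balance in $y\partial_yP$. Each of these reduces to a linear inequality in $\lambda_2-\lambda_3$ and $r-\lambda_2$. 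I expect the sharpest one, presumably $u_1>\lambda_2$ or the one involving the cut facet, to be exactly the hypothesis $\frac{\lambda_2-\lambda_3}{r-\lambda_2}<\frac{1}{n+1}$. Carrying out this bookkeeping carefully is the main obstacle, since one must make sure that no other balance, for instance one involving $xT^{-\lambda_2}$, yields further critical points in $\interior\Delta$. To handle this I will run a tropical check: enumerate all pairs of monomials that could dominate in each equation, and show that only the balance above is consistent with $u\in\interior\Delta$.

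\textbf{Step 2: leading coefficients.} Write $x=aT^{u_1}(1+\cdots)$, $y=bT^{u_2}(1+\cdots)$, $z=dT^{u_3}(1+\cdots)$ with $a,b,d\in\C^\times$. The leading-order equations become
\[
b^2=1,\qquad d^2=ab,\qquad a^{-1}d=a^{-2}b^{-n}.
\]
Eliminating $a=d^2/b$ gives $d^{3}=b^{\,1-n}\cdot(\text{unit})$. So $b=\pm1$, and for each choice of $b$ there are three cube roots for $d$, with $a$ then determined. This gives $6$ solutions in $(\C^\times)^3$. The Jacobian of the truncated system in logarithmic coordinates is a monomial matrix whose determinant is a nonzero multiple of products of $2$'s and $3$'s, so each solution is nondegenerate.

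\textbf{Step 3: lifting to $W$.} By Proposition \ref{prop leading order potential}, $W=P+(\text{terms of strictly higher valuation at } u_0)$, since every other split disk has area larger than one of the basic disks. I then apply the Hensel/Newton iteration argument of \cite{FOOOI}, as in the proof of their Theorem on the existence of critical points from nondegenerate leading terms. Each of the $6$ nondegenerate leading solutions lifts uniquely to a critical point of $W$ in $(\Lambda_0\text{-units})^3$ with valuation exactly $u_0$. This yields the six critical points claimed in Proposition \ref{Proposition critical points of po}.
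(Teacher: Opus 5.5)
Your argument is sound and, once the points below are corrected, proves the statement as written. It differs from the paper mainly in how $u_0$ is found. The paper does not posit a balance. From $xy=z^2$ and the $x$-equation it obtains $y^{n-2}=T^{\lambda_2+r+n\lambda_3}/\bigl(z^3(z+T^{\lambda_2})\bigr)$. It then reduces the system to the single polynomial $g(z)=z^6(z+T^{\lambda_2})^n(nz+T^{\lambda_2})^{n-2}-T^{2r+n\lambda_2+(n+2)\lambda_3}$ and reads off from its Newton polygon all critical points of $P$. There are six with $\val(z)=u_3$ (your balance) and $2n-2$ with $\val(z)=\lambda_2$, which it then shows lie outside $\Delta$. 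Non-degeneracy is checked in stages: $y$ at level $S_1$, then $(x,z)$ at level $S_2$. Solving the initial system directly, as you do, avoids the fractional powers in that elimination and makes the count $2\times 3$ transparent. Rescaling the three logarithmic derivatives by $T^{-S_1},T^{-S_2},T^{-S_2}$ before applying Hensel plays the role of the paper's stage-wise non-degeneracy. What the elimination buys is a complete list of the critical points of $P$.

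\textbf{Inequalities and the role of the hypothesis.} The term $xz^{-1}$ dominates $xT^{-\lambda_2}$ iff $u_3>\lambda_2$, not $u_3<\lambda_2$. In $y\partial_yP$, the terms $y^{-1}z$ and $x^{-1}y^{-n}T^{r+n\lambda_3}$ must be compared with $S_1=\frac{\lambda_2-\lambda_3}{2}$, not with $\lambda_3$. At $u_0$ the facet values are $S_1$ (left, right), $S_2$ (top, bottom, back) and $S_3$ (front), with $S_2-S_1=S_3-S_2=u_3-\lambda_2$. So every consistency and interiority condition reduces to $u_3>\lambda_2$, i.e.\ $\frac{\lambda_2-\lambda_3}{r-\lambda_2}<\frac{2}{n+2}$, which already follows from \eqref{cannot cut the S3 vertex} because $n>2$.

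Your expectation that the sharpest inequality is the hypothesis $\frac{\lambda_2-\lambda_3}{r-\lambda_2}<\frac{1}{n+1}$ is therefore mistaken: that hypothesis plays no role for the six points. The paper uses it only to exclude the family \eqref{valuations/Lagrangian 1bi}, with $u_3=\lambda_2$ and $z=-T^{\lambda_2}+\cdots$. That family lies in $\interior\Delta$ exactly when $\frac{\lambda_2-\lambda_3}{r-\lambda_2}>\frac{1}{n+1}$.

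Your pairwise dominant-monomial check would not settle this family. There $xz^{-1}+xT^{-\lambda_2}$ cancels at leading order, and the initial system has the one-parameter solution set $S$. The valuations $u_1,u_2$ are fixed only through $\val(z+T^{\lambda_2})=\frac{2r-4\lambda_2+(n+2)\lambda_3}{n}$, which the paper reads off from $g$. You need this only for the stronger claim that there are no other critical points in $\interior\Delta$.

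\textbf{Leading coefficients and the Jacobian.} The leading coefficient of $xz^{-1}$ is $a/d$, so the third initial equation is $a^2b^n=d$ and $d^3=b^n$, in agreement with the paper's tables; the count is unchanged. In logarithmic coordinates the Jacobian is block triangular rather than monomial. The $y$-row is $(0,2b,0)$ and the $(x,z)$-block is $\bigl(\begin{smallmatrix}2t&-t\\-t&2t\end{smallmatrix}\bigr)$ with $t=a/d$, giving determinant $6bt^2\neq 0$.

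\textbf{The lifting step.} In Step 3, ``area larger than some basic disk'' only bounds the corrections below by $S_1$. For the rescaled $x$- and $z$-equations you need every correction with nonzero $x$- or $z$-exponent to have valuation $>S_2$ at $u_0$. This holds because such a class contains a top, bottom, front or back basic class plus positive additional area.
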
 
\begin{proof}
To simplify the notation, let 
\[ a = T^{\lam_1}, \quad b =T^{\lam_2}, \quad c= T^{\lam_3}, 
\quad 
d = T^{ r + n\lam_3}.  
\] 
By Proposition \ref{prop leading order potential},
\begin{align*} 
 P 
 & = yT^{-\lambda_3}+y^{-1}T^{\lambda_2} + xz^{-1}  + y^{-1}z + xT^{-\lambda_2} + x^{-1}y^{-n}T^{r+n\lambda_3} \\ 
 & = \frac{y}{c}+\frac{b}{y} + \frac{x}{z} + \frac{z}{y} + \frac{x}{b} + dx^{-1}y^{-n}.   \nonumber
\end{align*}
A critical point of $P$ is a solution of the following system of equations. 
\begin{equation}
\begin{dcases}
  0 = x\frac{\partial P}{\partial x}  = \frac{x}{z} +  \frac{x}{b} - dx^{-1}y^{-n} = f_1 \\
  0 =  y\frac{\partial P}{\partial y}  = \frac{y}{c} - \frac{b}{y} - \frac{z}{y} - ndx^{-1}y^{-n} = f_2 \\
  0 =   z\frac{\partial P}{\partial z}  = - \frac{x}{z} + \frac{z}{y}  = f_3
\end{dcases}    
\label{leading order crit equation}
\end{equation}
From $f_3=0$, we have 
\begin{equation}
 \label{third equation}
   xy=z^2. 
\end{equation} 
From 
$bzxy^n\cdot f_1=0$,
we have 
\[ 
0 = bx^2y^n + x^2y^nz-bdz = z\left( y^{n-2} z^3(z+b)-bd\right)
\]
If $0=z+b=z+T^{\lam_2}$, then $bd=0$, which never holds. 
Since $z\ne 0$ and $z+b\ne 0$, 
we must have 
\begin{equation}
\label{y in terms of z}
y^{n-2} = \frac{bd}{z^3(z+b)} 
\end{equation}
This implies that 
\[ 
    u_2 = \val(y) = \frac{1}{n-2}\left(\lam_2+r+n\lam_3 - 3\val(z)-\val(z+T^{\lam_2})\right) 
    \]
From $cxy^{n}\cdot f_2 =0$, we get 
\begin{align*}
0 
& = xy^{n+1} - bcxy^{n-1} - cxy^{n-1}z - ncd \\
% & = xy\cdot y^n -xy\cdot cy^{n-2}\left(z+b\right)-ncd \\
& = z^2 \left(\frac{bd}{z^3(z+b)} \right)^{\frac{n}{n-2}} - cz^2\frac{bd}{z^3(z+b)}(z+b)-ncd \\
& = \left(bd\right)^{\frac{n}{n-2}} \cdot z^{-\frac{n+4}{n-2}} (z+b)^{-\frac{n}{n-2}}-bcdz^{-1}-ncd, \quad 
% \text{since }2-\frac{3n}{n-2} =  -\frac{n+4}{n-2}
\end{align*} 
 
Hence,
% \[ (cd)^{n-2}z^6(z+b)^{n} (nz+b)^{n-2}-(bd)^n=0. \]
\begin{equation} 
0 =z^6(z+b)^n(nz+b)^{n-2} - b^nc^{-(n-2)}d^2=:g.  \label{equation in z}
\end{equation}

The degree of the polynomial is $6+n+n-2 = 2n+4$. And
\begin{align}
  \label{reduced poly in z}
    g 
   & =\sum_{l=0}^{n-2}  \left(\sum_{j=0}^{l} \binom{n}{l-j}\binom{n-2}{j}n^j\right) b^{2n-2-l}z^{l+6} 
   \\ \nonumber 
   & + \sum_{l=n-1}^{2n-2}  \left(\sum_{j=0}^{n-2} \binom{n}{l-j}\binom{n-2}{j}n^j\right) b^{2n-2-l}z^{l+6}  - b^nc^{-(n-2)}d^2 
   \nonumber 
\end{align}
Note that $\val(b^{2n-2-l}) = (2n-2-l)\lam_2$ and 
\[ \val (b^nc^{-(n-2)}d^2) = n\lam_2-(n-2)\lam_3 + 2(r+n\lam_3) = 2r+n\lam_2+(n+2)\lam_3\]
 
Recall that the Newton polygon $\operatorname{Newt}(g)$ of $g=\sum c_u z^u$ is the projection of the lower facets of
\[ P_{\val} = \operatorname{conv}\{ (u,\val(c_u)): c_u\ne 0\} \]
to the $u$-coordinates. 

We use the following theorem to tropicalize $V(g)$. 
\begin{thm}[\cite{koblitz1984p} IV.3 Lemma 4]
\label{thm counting roots}
    Let $K$ be a complete non-archimedean field with valuation $\val: K\to \R\cup \{\infty\}$. 
    Let $f  = \sum c_u x^u \in K[x^{\pm 1}]$. 
    If a segment connecting $(a,\val(c_a))$ and $(b,\val(c_b))$ of the Newton polygon of $f(x)$ has slope $-m \in \R$, then $f$ has precisely $b-a$ roots of valuation $m$. 
\end{thm}

In our case,
\[  P_{\val}  
=    \operatorname{conv} ( \{(0,2r+n\lam_2+(n+2)\lam_3)\} \cup \{(l+6, (2n-2-l)\lam_2)\mid 0\leq l\leq 2n-2\}). \]
Let 
\[ A =(0,2r+n\lam_2+(n+2)\lam_3),\quad B=(6,(2n-2)\lam_2),\quad C=(2n+4,0),   \]
We denote by $\overline{AB}, \overline{BC}, \overline{AC}$ the slopes of $AB$, $BC, AC$, respectively. Then  
\begin{align*}
\overline{AB}& = -\frac{2r+(2-n)\lam_2+(n+2)\lam_3}{6} , \\
\overline{BC}& = -  \lam_2, \\
\overline{AC} & = - \frac{2r+n\lam_2+(n+2)\lam_3}{2n+4}.
\end{align*}
There are three possible configurations of the Newton polygon, corresponding to 
the cases where $ r >, = \text{ or }<  \dfrac{(n+4)\lam_2-(n+2)\lam_3}{2}$, respectively. 
But by \autoref{cannot cut the S3 vertex}, $r$ must satisfy
\begin{equation} 
    \label{r>r0}
    r >  \frac{(n+4)\lam_2-(n+2)\lam_3}{2} \quad \Longleftrightarrow \quad \frac{\lambda_2-\lambda_3}{r-\lambda_2}<\frac{2}{n+2}.  
\end{equation}
Therefore, $\overline{AB}< \overline{BC}$, in which case the Newton polygon has two line segments, one connecting $AB$ and one connecting $BC$. 
    
By Theorem \ref{thm counting roots}, there are two possible values for $u_3$. 
It turns out that the critical points arising in case \ref{invalid critical points} \textit{do not} lie inside the Gelfand--Cetlin polytope.  \begin{enumerate}[(a)]
        \item  \label{valid critical points}  Equation \eqref{equation in z} has $6$ roots of valuation 
        \[\val(z)=u_3= \dfrac{2r+(2-n)\lam_2+(n+2)\lam_3}{6}, \] which is $> \lam_2$, making $\val(z+b) = \lam_2$. 
         By \eqref{y in terms of z}, 
    \[
     u_2 
     = \frac{1}{n-2}\left( \lam_2+r+n\lam_3- \frac{2r+(2-n)\lam_2+(n+2)\lam_3}{2}- \lam_2 \right) =  \frac{\lam_2+\lam_3}{2},
    \]
  
    \[ u_1= \frac{2r+(2-n)\lam_2+(n+2)\lam_3}{3}- \frac{\lam_2+\lam_3}{2} = \frac{4r-(2n-1)\lam_2 + (2n+1)\lam_3}{6}.\]
    These correspond to solutions to \eqref{leading order crit equation}  with valuation 
    { \fontsize{10}{11}
    \begin{align}
      & (u_1,u_2,u_3) \nonumber \\
     = & \lrp{\frac{4r-(2n-1)\lam_2 + (2n+1)\lam_3}{6}, \frac{\lam_2+\lam_3}{2} , \frac{2r+(2-n)\lam_2+(n+2)\lam_3}{6}}.   
\label{valuations/Lagrangian 1a}    
    \end{align} 
        }
    Consider \[ \text{trop} P = \min \{ u_2-\lambda_3, \lambda_2-u_2, u_1-u_3, u_3-u_2, u_1-\lambda_2, r+n\lambda_3-u_1-nu_2\}. \]
    For \autoref{valuations/Lagrangian 1a},
    \begin{align*}
      u_2 -\lambda_3 
      &  = \lambda_2-u_2 = \frac{\lambda_2-\lambda_3}{2} = :S_1,  \\
      u_1-u_3 
      & =u_3-u_2=r+n\lambda_3-u_1-nu_2  = \frac{2r-(n+1)\lambda_2 + (n-1)\lambda_3}{6}=: S_2\\
      u_1-\lambda_2 & = \frac{4r-(2n+5)\lambda_2 + (2n+1)\lambda_3}{6}=: S_3. 
    \end{align*}
    Then $0<S_1<S_2<S_3$. In particular, \autoref{valuations/Lagrangian 1a} lies in the Gelfand--Cetlin polytope of $M$. 

    To check the non-degeneracy of the corresponding critical points, we only need to check the stage-wise non-degeneracy (as in \cite{GonzalezWoodward}), or equivalently the solutions to the leading term equation are weakly non-degenerate (as in \cite{FOOOI}). 
    \begin{defn}[Stage-wise non-degenerate critical point]
         Let $\tilde{T}^{\vee} = \Hom(M, \Lambda^*)$. 
     Suppose the following holds. 
     \begin{enumerate}
         \item   \[ \tilde{T}^{\vee} = \tilde{T}_1^{\vee} \times \cdots \times \tilde{T}_K^{\vee},\]
         where each $\tilde{T}_i^{\vee}$ is a product of a torus with a finite group $\Gamma_i$. 
          \item $P: \tilde{T}^{\vee} \to \Lambda$ is a function such that $P=W_1+W_2+\cdots +W_K$ , where $W_i: \tilde{T}^{\vee}\to \prod\limits_{j=1}^i\tilde{T}_j^{\vee}$ factors through the projection onto the first $i$ factors.  
         \item $\val(W_i) = S_i$ for each $i$ and $S_1<\cdots <S_K$. 
         \item $y=(y_1,\ldots, y_K)\in \prod\limits_{j=1}^K\tilde{T}_j^{\vee}$ such that, for $i\geq 2$, $y_i$ is a non-degenerate critical point of  
         \[ \tilde{W_i} := W_{i}|_{\tilde{T}_i^{\vee}}: \tilde{T}_i^{\vee}\to \Lambda. \]
     \end{enumerate}
     Then we say $y$ is a \textit{stage-wise non-degenerate critical point} of $P$. 
    \end{defn}
    \begin{prop}[Stage-wise non-degenerate critical points, \cite{GonzalezWoodward} Proposition 4.13, \cite{FOOOI} Theorem 10.4]  
    Suppose $P: \tilde{T}^{\vee} \to \Lambda$ is a function such that every critical point $y\in \operatorname{Crit}_+(P)$ is  stage-wise non-degenerate as in the definition above. 
    For any $F$, let $\operatorname{Crit}_+(F)$ denote the set of critical points with positive valuation.  
    Then there exists a bijection 
    \[ \prod_{i=1}^{K}\operatorname{Crit}_+ (\tilde{W_i}) \to \operatorname{Crit}_+(W)  
    \]
    for sufficiently small $S_1,\ldots, S_K$. 
    \end{prop}
    In our case, we only need to consider the first two stages. 
    Stage 1: 
    \[ \widehat{W}_1= (y+y^{-1})T^{\frac
    {\lambda_2-\lambda_3}{2}} = W_1T^{S_1}. \]
    Stage 2: 
    \[ \widehat{W}_2= (xz^{-1} + zy^{-1}+ x^{-1}y^{-n} )T^{ \frac{2r-(n+1)\lambda_2 + (n-1)\lambda_3}{6}} = W_2T^{ S_2}. \]
    
The corresponding leading term equations are 
\[
\begin{dcases}
 0=  \frac{\partial W_1}{\partial y} = 1-\frac{1}{y^2}\\
 0 = \frac{\partial W_2}{\partial x} =  \frac{1}{z}-\frac{1}{x^2y^n}\\
 0 = \frac{\partial W_2}{\partial z} = -\frac{x}{z^2}+\frac{1}{y}
\end{dcases} 
\qar \begin{dcases}
 0=  y^2-1\\
 0 = x^2y^n-z \\
 0 = z^2-xy
\end{dcases}
\] 
The determinant of the stage 1 Hessian is $\Hess_1 = 2y^{-3}$. 
And the stage 2 Hessian is given by the following
\[ 
\Hess_2 
= \det 
\begin{pmatrix}
\dfrac{2}{x^3} 
& 
-\dfrac{1}{z^2}
\\
-\dfrac{1}{z^2} 
& 
\dfrac{2x}{z^3}
\end{pmatrix}
 = 
 4x^{-2}z^{-3}-z^{-4}
\]

Let $\zeta = e^{\frac{2\pi i}{3}
}$. 
\begin{enumerate}[(i)]
    \item 
For an odd $n$,  
we list the 6 solutions to the leading term equations and the corresponding determinants of the Hessians at stage $1$ and $2$.  
 
\[
\begin{array}{cccccc}
x_0 & y_0 & z_0 & \Hess_1 & \Hess_2 &P(x_0,y_0,z_0) \\
\hline
1 & 1 & 1 & 2 & 3 & w_1= 2T^{S_1}+3T^{S_2}+T^{S_3} \\
\zeta & 1  & \zeta^2 & 2 & 3\zeta & w_2= 2T^{S_1}+3\zeta^2T^{S_2}+\zeta T^{S_3} \\
\zeta^2 & 1 & \zeta & 2 & 3\zeta^{2} & w_3 = 2T^{S_1}+3\zeta T^{S_2}+\zeta^2T^{S_3} \\
-1 & -1 & -1 & -2 & -5 & -w_1 \\
-\zeta^2 & -1 & -\zeta & -2 & -5\zeta^{-1} & -w_3 \\
-\zeta & -1 & -\zeta^2 & -2 & -5\zeta^{-2} & -w_2
\end{array}
\] 
\item 
For an even $n$,  we list the 6 solutions to the leading term equations and the corresponding determinants of the Hessians at stage $1$ and $2$. 
 
\[
\begin{array}{cccccc}
x_0 & y_0 & z_0 & \Hess_1 & \Hess_2 & P(x_0,y_0,z_0) \\
\hline 
1 & 1 & 1 & 2 & 3 & w_1 \\
\zeta & 1  & \zeta^2 & 2 & 3\zeta & w_2 \\
\zeta^2 & 1 & \zeta & 2 & 3\zeta^{2} & w_3\\
 -1 & -1 & 1 & -2 & 3 & -w_1\\
-\zeta^{2} & -1 & \zeta & -2 & 4\zeta-\zeta^{-1} & -w_3 \\
 -\zeta & -1 & \zeta^2 & -2 & 4\zeta^2-\zeta & -w_2
\end{array}
\]
\end{enumerate}
 
\item 
\label{invalid critical points}
$g$ has $2n-2$ roots of valuation $u_3=\lam_2$. 
There are two cases. 
\begin{enumerate}[{Case }1.] 
    \item 
   $\val(z+b)>\lam_2$, which happens if $z=-T^{\lambda_2}+$higher valuation terms. By \autoref{equation in z},
    \begin{align*}
      \val(z+b) 
      & = \frac{1}{n}\lrp{\val (b^nc^{2-n}d^2) -6\val(z)-(n-2)\val(nz+b)} \\
      & = \frac{1}{n}\lrp{2r+n\lambda_2+(n+2)\lambda_3-6\lambda_2-(n-2)\lambda_2} \\
      & = \frac{2r-4\lambda_2+(n+2)\lambda_3}{n},  
    \end{align*}
    making 
   \begin{align*}
     u_2 
     = \val(y) & = \frac{1}{n-2}\left(\lam_2+r+n\lam_3- 3\val(z)-\val(z +b)\right) \\
   & = \frac{1}{n-2}\left( \lam_2+r+n\lam_3-  3\lam_2 - \frac{2r-4\lambda_2+(n+2)\lambda_3}{n} \right)\\
   &= \frac{r-2\lam_2+(n+1)\lam_3}{n}
    \end{align*} 
     and 
    \[ u_1= 2\lam_2-  \frac{r-2\lam_2+(n+1)\lam_3}{n} =  \frac{-r+(2n+2)\lam_2-(n+1)\lam_3}{n}.\]
  The corresponding solutions to the system \eqref{equation in z} have valuation  \begin{equation}
    (u_1,u_2,u_3)= \lrp{\frac{-r+(2n+2)\lam_2-(n+1)\lam_3}{n}, \frac{r-2\lam_2+(n+1)\lam_3}{n}, \lam_2  }. \label{valuations/Lagrangian 1bi}   
    \end{equation}
     Consider \[ \text{trop} P = \min \{ u_2-\lambda_3, \lambda_2-u_2, u_1-u_3, u_3-u_2, u_1-\lambda_2, r+n\lambda_3-u_1-nu_2\}. \]     
\begin{align*}
 u_2-\lambda_3= & r+n\lambda_3-u_1-nu_2  = \frac{r-2\lam_2+\lam_3}{n}   =:A \\
\lambda_2-u_2 = &  u_3-u_2= u_1-u_3 = u_1-\lambda_2 = \frac{-r+(n+2)\lam_2-(n+1)\lam_3}{n} =:B
\end{align*}
Then $A  > B$, since
\[ A-B = \frac{1}{n}\lrp{2r-(n+4)\lam_2+(n+2)\lam_3}>  0.  \]
Moreover,   
\[ B>0 \quad \Longleftrightarrow \quad  -r+(n+2)\lam_2-(n+1)\lam_3 >0 \quad \Longleftrightarrow \quad  \frac{\lam_2-\lam_3}{r-\lam_2}>\frac{1}{n+1}. \]
Thus, if $\dfrac{\lam_2-\lam_3}{r-\lam_2}\leq \dfrac{1}{n+1}$, then
\autoref{valuations/Lagrangian 1bi} lies outside the polytope. \\

If $\dfrac{\lam_2-\lam_3}{r-\lam_2}>  \dfrac{1}{n+1}$, then
\autoref{valuations/Lagrangian 1bi} lies inside the polytope.

\textit{
However, we can always pick $r,\lambda_2,\lambda_3$ such that $\dfrac{\lam_2-\lam_3}{r-\lam_2}<\dfrac{1}{n+1}$.}

Consider 
\[ \widehat{W} = \lrp{\frac{1}{y}+\frac{z}{y}+\frac{x}{z}+x}T^{\frac{-r+(n+2)\lambda_2-(n+1)\lambda_3}{n}} = WT^B.  \]
The leading term equations are
\[ 
\begin{dcases}
 0 = \frac{\partial W}{\partial x} = \frac{1}{z}+1 \\
  0 = \frac{\partial W}{\partial y} = -(z+1)y^{-2} \\ 
  0 = \frac{\partial W}{\partial z} = \frac{1}{y}-\frac{x}{z^2} 
\end{dcases}
\]
The solution set to the leading term equations is 
\[ 
S=\set{\left(x,\frac{1}{x},-1\right) }{x\in \C^{\times}} . 
\]
They are degenerate because
\[
\det \begin{pmatrix}
    0 & 0 & -z^{-2} \\
    0 & 2(z+1)y^{-3} & -y^{-2}\\
    -z^{-2} & -y^{-2} & 2xz^{-3}
\end{pmatrix} = 0  \quad \forall (x,y,z)\in S. 
\] 
\item $\val(z+b)=\lam_2$. 
  Then
    \begin{align*}
     u_2 
     = \val(y) & = \frac{1}{n-2}\left(\lam_2+r+n\lam_3- 3\val(z)-\val(z +b)\right) \\
   & = \frac{1}{n-2}\left( \lam_2+r+n\lam_3-  4\lam_2\right) =  \frac{r-3\lam_2+n\lam_3}{n-2}
    \end{align*}
    and 
    \[ u_1= 2\lam_2-  \frac{r-3\lam_2+n\lam_3}{n-2} = \frac{-r+(2n-1)\lam_2 -n\lam_3}{n-2}.\]
  The corresponding solutions to the system \eqref{equation in z} have valuation  \begin{equation}
    (u_1,u_2,u_3)= \lrp{\frac{-r+(2n-1)\lam_2 -n\lam_3}{n-2}, \frac{r-3\lam_2+n\lam_3}{n-2}, \lam_2  }. \label{valuations/Lagrangian 1bii}     
    \end{equation}
 To check the non-degeneracy of these solutions that may occur, we consider \[ \text{trop} P = \min \{ u_2-\lambda_3, \lambda_2-u_2, u_1-u_3, u_3-u_2, u_1-\lambda_2, r+n\lambda_3-u_1-nu_2\}. \]     
\begin{align*}
 u_2-\lambda_3   =&\frac{r-3\lambda_2+2\lambda_3}{n-2}=:A \\
\lambda_2-u_2 = &  u_3-u_2= u_1-u_3 = u_1-\lambda_2 =  r+n\lambda_3-u_1-nu_2 \\
= & \frac{-r+(n+1)\lambda_2-n\lambda_3}{n-2}=:B
\end{align*}
Then $A  > B$, since
\[ A-B> \frac{1}{n-2}\lrp{(n+4-n-4)\lambda_2+(-n-2+n+2)\lambda_3} = 0.  \]
\autoref{valuations/Lagrangian 1bii}  lies outside of the Gelfand--Cetlin polytope, since
\begin{equation} 
\frac{\lambda_2-\lambda_3}{r-\lambda_2} < \frac{1}{n} 
\quad  \Longleftrightarrow \quad 
 r > (n+1)\lambda_2-n\lambda_3 
\quad  \Longleftrightarrow \quad 
B<0.
\label{in polytope}
\end{equation}  
\end{enumerate}
\end{enumerate}
Therefore,  we can always obtain 6 non-degenerate critical points of the leading order potential with valuation \autoref{relations between lambda and r} from case \ref{valid critical points}. (We remark that the rank of the cohomology of $M$ is also $6$.)
By the following lemma, they correspond to the critical points of the full potential. 
\begin{lem}[Hensel's Lemma, \cite{CharestWoodward} Theorem 4.36]
Suppose that $W: (\Lambda^{\times})^n \to \Lambda_{0}$ is a function of the form $W=W_0+W_1$, where $W_0$ consists of the terms of lowest $T$-valuation. 
Suppose that $y_0\in \operatorname{Crit}(W_0)$ and $W_0$ has non-degenerate Hessian at $y_0$. 
Then there exists $y_1\in (\Lambda_+)^n$, such that 
$y=y_0\exp (y_1)\in \operatorname{Crit}(W)$ with the terms in $y_1$ having higher $T$-valuation than those in $y_0$. 
\end{lem}
\end{proof}

Although we require $n>2$, we may compare Proposition \ref{Proposition critical points of po} with the case of the monotone $U(3)$-coadjoint orbit $\orbit_{\lam}$ with $\lambda_1>\lambda_2>\lambda_3$ by letting $n=0$ and $r= \lambda_1$.  
Then \autoref{valuations/Lagrangian 1a} becomes
\[ (u_1,u_2,u_3)= \lrp{ 
\frac{ 4\lam_1 +\lam_2+\lam_3}{6}, \frac{\lam_2+\lam_3}{2} , \frac{\lambda_1+\lambda_2+\lambda_3}{3}},    \]
which coincides with the computation in Nishinou--Nohara--Ueda \cite{NNU}. 
In particular, for a monotone $U(3)$-coadjoint orbit $\orbit_{\lam}$, namely $\lam_1 -\lambda_2=\lambda_2-\lambda_3=\delta$, \autoref{valuations/Lagrangian 1a} becomes
\[ (u_1,u_2,u_3)= \lrp{ 
\frac{2\lam_2+\delta}{2}, \frac{2\lam_2-\delta}{2} , \lambda_2}.  \] 

\section{Displaceability of Gelfand--Cetlin fibers}
\label{section nondisplaceability}
\label{Non-displaceability of the Lagrangian torus}
In this section, we prove that the Lagrangian torus fiber over $u_{0}$ is non-displaceable, and that all other interior Gelfand--Cetlin fibers of $M$ are displaceable, except for a one-parameter family of fibers.

Previously, the works of Pabiniak \cite{PabiniakDisplacing} and \cite{ChoKimOhCrit} have studied the displaceability of the interior Gelfand--Cetlin fibers of generic $U(3)$-coadjoint orbits of the form $\orbit_{\lam}$. 
They derived that, when $\orbit_{\lam}$ is non-monotone, only the central fiber determined by the critical points of the potential function is non-displaceable; and when $\orbit_{\lam}$ is monotone, a line segment of fibers connecting the central fiber and the vertex Lagrangian $S^3$ are non-displaceable. 

We remark that, in our case (see Proposition \ref{Proposition Probe-nondisplaceable set}), $u_0$ is disjoint from the one-parameter set. 
\subsection{Non-displaceability of the central Lagrangian torus} 
\begin{thm}[Existence of non-displaceable Lagrangians]
\label{thm Existence of a non-displaceable Lagrangian}
Let $M$ be a manifold of the form \autoref{defn of M} with parameters $r,\lambda_1,\lambda_2,\lambda_3, n$ satisfying 
\autoref{n>2},
\autoref{relations between lambda and r}, and $\dfrac{\lam_2-\lam_3}{r-\lam_2}<\dfrac{1}{n+1}$. 

Let  $u_0$ be as in \autoref{valuation of crit points},    and let $\rho\in \Hom(\pi_1(\Phi^{-1}(u_0)),U_{\Lambda})$ be such that $(u_0,\rho)$ determines a critical point, found in Section \ref{section Critical points of the potential}, of $W$ via \autoref{coordinates on union of local systems}. 
Let $\widehat{L}$ be the brane structure on $\Phi^{-1}(u_0)$ determined by the holonomy $\rho$.  
Then 
\[ 
HF\lrp{\widehat{L}, \widehat{L}; \Lambda_0} \cong H^*(\widehat{L};\Lambda_0)\ne 0.  
\]
As a consequence, $\Phi^{-1}(u_0)$ is a non-displaceable Lagrangian torus in $M$. 
\end{thm}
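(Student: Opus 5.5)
The argument is the standard Fukaya--Oh--Ohta--Ono one: a critical point of the potential forces the Floer differential to vanish. We then transfer from the split model to the ordinary one and invoke Hamiltonian invariance.

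\textbf{Step 1 (weak bounding cochain and critical point).} By Lemma \ref{lem weak Maurer-Cartan}, for every local system $\rho$ the element $b = w x^{\greyt}$ is a weak Maurer--Cartan solution of $CF_{split}(\widehat L)$, with $\m^0_b(1) = W(u_0,\rho)\, x^{\whitet}$. Proposition \ref{Proposition critical points of po} supplies six stage-wise non-degenerate critical points of the leading order potential $P$ with valuation $u_0$. Hensel's Lemma (\cite{CharestWoodward} Theorem 4.36) lifts each of them to a critical point of the full potential $W$ with the same valuation, hence to a local system $\rho$ on $\Phi^{-1}(u_0)$ with $\partial W/\partial x_i (u_0,\rho) = 0$ for $i=1,2,3$. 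The three inequalities $0<S_1<S_2<S_3$ ensure that $u_0\in \interior\Delta$. The condition $\frac{\lambda_2-\lambda_3}{r-\lambda_2}<\frac{1}{n+1}$ makes $u_0$ the only valuation to consider, although this is not essential to the argument.

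\textbf{Step 2 (vanishing of $\m^1_b$ on degree one).} As in \cite{FOOOI} and \cite{VWSplit} Proposition 7.3, the perturbation datum can be chosen torus-equivariant with the standard complex structure on $M_+$. Every contributing split disk is of type \ref{type 1} (Lemma \ref{lem no rigid spheres in M-}), so $M_+\setminus Y$ behaves like a toric manifold. The divisor axiom, equivalently the torus-translation argument in the proof of Lemma \ref{lem weak Maurer-Cartan}, then identifies $\m^1_b(e_i)$ for the degree-one generators $e_i$ (dual to $e_i^*$) with $\partial_{x_i} W(u_0,\rho)\cdot x^{\blackt}$ up to a unit. At a critical point this vanishes, so $\m^1_b$ kills $H^1$. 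The cup product structure, that is the leading term of $\m^2_b$, generates $H^*(T^3;\Lambda_0)$ from $H^1$, and the $A_\infty$ Leibniz rule then gives $\m^1_b\equiv 0$ on the canonical model. Hence $HF(\widehat L,\widehat L;\Lambda_0)\cong H^*(T^3;\Lambda_0)\neq 0$.

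\textbf{Step 3 (transfer and non-displaceability).} By \cite{VWSplit} Theorem 1.1 and \cite{VW} Theorem 1.6, $CF_{split}$ is $A_\infty$-homotopy equivalent to the broken algebra and then to the ordinary Floer algebra $CF(L)$. The weak bounding cochain transfers along these equivalences with the same potential value, so the Floer cohomology is nonzero in the ordinary model as well. If $\phi(\Phi^{-1}(u_0))\cap \Phi^{-1}(u_0)=\emptyset$ for some Hamiltonian diffeomorphism $\phi$, Hamiltonian invariance of $HF$ over $\Lambda$ would force it to vanish. Therefore $\Phi^{-1}(u_0)$ is non-displaceable.

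I expect Step 2 to be the main obstacle. One has to justify that, in the split model, the differential is computed by derivatives of $W$. This requires the divisor axiom for degree-one classes with split disks, including disks whose components meet $Y$. It also requires a check that the Novikov torsion coming from higher-order terms does not spoil the isomorphism over $\Lambda_0$. I would first try to reduce to the toric computation of \cite{FOOOI} Theorem 10.4 by using the torus symmetry of $M_+$ as in \cite{VWSplit} Proposition 7.5.
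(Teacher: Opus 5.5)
Your proposal is correct and follows essentially the paper's route: a weak divisor equation for split treed disks (the paper proves it with the forgetful map) identifies $\m_1(c)$ for degree-one $c$ with $\partial_c W(y)$ times the top-degree generator, which vanishes at the Hensel-lifted critical point, and the $A_\infty$ relations with $\m_{2,0}$ equal to the cup product then extend the vanishing to all of $H^*(L;\Lambda_0)$. The differences are presentational: the paper concludes via collapse at $E_2$ of the FOOO spectral sequence, with an explicit induction on energy and degree, rather than via a canonical model, and it leaves your Step 3 (transfer to the unbroken model and Hamiltonian invariance) implicit.
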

The proof of the Theorem follows closely that in \cite{CharestWoodward} Section 4.7. 
\begin{proof}
Let $L =  \Phi^{-1}(u_0)$ and 
$\rho:\pi_1(L)\to U_{\Lambda}$ be a local system corresponding to a critical point $y=\left(\rho(e_1^*)T^{u_1},\ldots, \rho(e_n^*)T^{u_n}\right)$ of $W$ in Section \ref{section Critical points of the potential}.  
We identify 
\[ T_{y} \Hom(\pi_1(L),U_{\Lambda}) \cong H^1(L,\Lambda_0). 
\]
Let $\left(CF(\widehat{L}, \widehat{L}), \{\m_k\}\right)$ be the split $A_{\infty}$ algebra induced by $\rho$. 
Then $\left(CF(\widehat{L}, \widehat{L}),\m_1 \right)$ is a weakly finite d.g.c.f.z. in the sense of \cite{fooobook1} Definition 6.3.27. 
Therefore, by Lemma 6.3.24, the $E_2$-page of the spectral sequence  constructed in \cite{fooobook1} Section 6.3.3 computes the Morse cohomology $H^*(L;\Lambda_0)$ of $L$. 
Moreover, by  Theorem 6.3.28, the spectral sequence converges to $HF(\widehat{L},\widehat{L}; \Lambda_0)$.

Let $c\in H^1(L;\Lambda_0)$.  
Since $y$ is a critical point, $\partial_{c}W(y)=0$. 
Moreover, for any $\beta\in H_2(X,L)$, we have $\partial_{c} y(\partial \beta)=\pair{\partial \beta, c} y(\partial \beta)$. 

Let  $\widetilde{\Gamma}$ be a split tropical graph with one incoming boundary leaf and one output root. 
Let $f(\widetilde{\Gamma})$ be the split tropical graph obtained from forgetting the incoming leaf. 
Let $x_1 \in CF^1(\widehat{L}, \widehat{L})$ be a Morse cycle.  
There exists a forgetful map 
\[ f:  \Mq^{\mathrm{split}}_{\widetilde{\Gamma},\text{red}}\left(L,\mathfrak{p}_{\widetilde{\Gamma}}, \eta_{gen};x_0, x_1\right)_0 \to \Mq^{\mathrm{split}}_{f(\widetilde{\Gamma}),\text{red}}( L, \mathfrak{p}_{f\left(\widetilde{\Gamma}\right)},\eta_{gen};x_0)_0.  \] 
Let $[u:C\to M]\in \Mq^{\mathrm{split}}_{\widetilde{\Gamma},\text{red}}\left(L,\mathfrak{p}_{\widetilde{\Gamma}}, \eta_{gen};x_0, x_1\right)_0 $. 
Let $W^u(x_1) $ denote the unstable manifold of $x_1$. 
Then  
\[ f^{-1}([u]) \cong \left(u|_{\partial C}\right)^{-1} W^u(x_1)  .\]  
Thus, the cardinality of $f^{-1}([u])$ is $\pair{[\partial u], [x_1]}$. 
Since the other factors (the symplectic area, the number of interior leaves, the holonomy, the orientation sign, the number of split edges, and the multiplicity of $\widetilde{\Gamma}$) contributed by $[u]$ to the split composition maps $\m_0$ and $\m_1$ stay the same, we obtain a weak divisor equation for split $A_{\infty}$ algebras similar to that for broken $A_{\infty}$ algebras in \cite{CharestWoodward} Proposition 4.33: 
\[ 
\pair{\partial \beta, [x_1]} \m_{0,\beta}(x_1) = \m_{1,\beta}(x_1) \qquad \forall \beta\in H_2(M,L
). 
\]
This implies that 
\begin{align*}
    0=\partial_{c}W(y)x^{\whitet} 
    & = \partial_{c}\m_0(1) \\
    & = \sum_{\beta} \pair{\partial \beta, c}\m_{0,\beta}(1)T^{\omega(\beta)}\\ 
    & = \sum_{\beta} \m_{1,\beta}(c)T^{\omega(\beta)}\\
    & = \m_1(c). 
\end{align*}

We claim that $\m_{1,\beta}(c)=0$ for any $\beta\in H_2(M,L)$ and $c$ of any classical degree $k$. 

We prove it by induction on the pair $(E, k)$, where $E=\omega(\beta), k = \deg c$. 
Since $\m_{1,0}(c)=0$, the base case holds. 
Suppose, for some $(E,k)>(0,0)$, we have  $\m_{1,\beta}(c)=0$ whenever  $(\omega(\beta), \deg c) \leq (E, k) $. 
Let $c_1,c_2\in H^*(L;\Lambda_0)$ such that $1\leq \deg c_1, \deg (c_2)\leq k$. 
By $A_{\infty}$ relations and the induction hypothesis, 
\begin{align*}
& \m_{1,\beta}(\m_{2,0}(c_1, c_2)) \\
 = &\sum_{\beta_1+\beta_2 = \beta} \pm \m_{2,\beta_1}\left(\m_{1,\beta_2}(c_1), c_2\right) + \sum_{\beta_1+\beta_2 = \beta} \pm \m_{2,\beta_1}\left(c_1, \m_{1,\beta_2}(c_2)\right) \\
 & + \sum_{\beta_1+\beta_2 = \beta, \beta_2\ne 0} \pm \m_{1,\beta_1}\left(\m_{2,\beta_2}(c_1,c_2)\right) =  0.
\end{align*} 
 This proves the claim by induction. 
 Since any $c$ is a linear combination of cup products, we conclude that $\m_1(c)=0$ for all $c\in H^*(L;\Lambda_0)$. 
 Therefore, the spectral sequence collapses at $E_2$ page and  $HF(\widehat{L},\widehat{L};\Lambda_0)\cong H^*(L;\Lambda_0)$. 
 
\end{proof}

\subsection{Displaceability of some other fibers}
Except at the vertex $(\lambda_2,\lambda_2,\lambda_2)$, 
the Gelfand--Cetlin polytope $\tilde{\Delta}$ of the coadjoint orbit $\orbit_{\lam}$ is a smooth moment polytope for the Gelfand--Cetlin $T^3$-action. 
Since the $S^1$-action we used for the symplectic cut is a subaction of the Gelfand--Cetlin $T^3$-action, $\Delta\setminus \{(\lambda_2,\lambda_2,\lambda_2)\} $ is also a toric moment polytope. 
Therefore, we can apply the method of probes developed in \cite{Probes} to detect displaceable Gelfand--Cetlin fibers. 

The main result of this subsection is the following proposition.  
\begin{prop}
\label{Proposition Probe-nondisplaceable set}
For $u=(u_1,u_2,u_3)\in \interior \Delta$, the Gelfand--Cetlin fiber $\Phi^{-1}(u)$ is displaceable if  
\[ u \not \in \set{\left(\frac{3\lambda_2-\lambda_3}{2}-t,\frac{\lambda_2+\lambda_3}{2}+t,\lambda_2 \right) }{0\leq t\leq \frac{\lambda_2-\lambda_3}{2}} \cup\{ u_0\}, \]
where $u_0$ is as in \eqref{valuation of crit points}. 
\end{prop}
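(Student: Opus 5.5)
The plan is to apply McDuff's probe method \cite{Probes} on the smooth toric part $\Delta\setminus\{(\lambda_2,\lambda_2,\lambda_2)\}$. Recall what a probe is. Take a rational point $b$ in the interior of a facet $F$ and an integral vector $w$ that is integrally transverse to $F$, meaning $w$ pairs to $1$ with the primitive inward normal of $F$ and can be completed to a lattice basis using vectors parallel to $F$. The probe is the segment $p_{F,w}(b)=\{b+tw\}$, followed until it leaves $\Delta$. If the probe avoids the singular vertex, every interior point strictly less than halfway along it is a displaceable fiber. The displacing isotopy is supported in the preimage of a neighbourhood of the probe, so the non-toric fiber over the vertex plays no role.

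The facets of $\Delta$ and their primitive inward normals are:
\begin{align*}
&F_1=\{u_2=\lambda_3\},\ (0,1,0); \qquad F_2=\{u_2=\lambda_2\},\ (0,-1,0); \qquad F_3=\{u_1=u_3\},\ (1,0,-1);\\
&F_4=\{u_3=u_2\},\ (0,-1,1); \qquad F_5=\{u_1=\lambda_2\},\ (1,0,0); \qquad F_6=\{u_1+nu_2=r+n\lambda_3\},\ (-1,-n,0).
\end{align*}
The two sets in the statement are exactly the points where the "tropical potential" $\min\{u_2-\lambda_3,\lambda_2-u_2,u_1-u_3,u_3-u_2,u_1-\lambda_2,r+n\lambda_3-u_1-nu_2\}$ has no strict minimizer that a probe can exploit. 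The proof proceeds in three steps.

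\textbf{Step 1: the $u_2$-direction.} Use the probes from $F_1$ in direction $e_2$ and from $F_2$ in direction $-e_2$; both directions are integrally transverse. For a point $u$, follow the vertical line through it in the $u_2$-direction. On the part of $\Delta$ where this line runs from $F_1$ to $F_2$, the point $u$ is displaced unless $u_2=\frac{\lambda_2+\lambda_3}{2}$. Where the line instead exits through $F_4$ (if $u_3<\lambda_2$) or through $F_6$, I compare distances: $u$ is displaceable whenever $u_2-\lambda_3$ is smaller than the remaining length to the exit. Conversely, from $F_4$ or $F_6$ I use probes with directions such as $e_2$, $-e_2$, or $(0,-1,0)+(\ast)$ adjusted to be integrally transverse. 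This handles every $u$ whose $\min$ above is attained by $F_1$ or $F_2$ alone, together with the points off the plane $u_2=\frac{\lambda_2+\lambda_3}{2}$ where the vertical line is unobstructed.

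\textbf{Step 2: the remaining facets.} On the remaining region I apply probes from $F_3$, $F_4$, $F_5$, $F_6$. Examples are direction $(0,0,-1)$ from $F_3$, $(0,0,1)$ from $F_4$, $(1,0,0)$ from $F_5$, and $(-1,0,0)$ or $(-1,-1,0)$ from $F_6$; transversality of each is checked from the normal list. Each facet probe displaces the points that are strictly closer to its starting facet than to the facet where the probe exits. Combining these with a case split according to which facets realize the minimum, the only survivors should be the points where all relevant distances tie. Solving these ties gives the equalities $u_1-u_3=u_3-u_2=r+n\lambda_3-u_1-nu_2$ together with $u_2=\frac{\lambda_2+\lambda_3}{2}$, which is precisely $u_0$, the point computed in Proposition \ref{Proposition critical points of po}.

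\textbf{Step 3 (main obstacle): near the singular vertex.} On the edge locus $u_3=\lambda_2$, $u_1+u_2=2\lambda_2$, every candidate probe from $F_4$ or $F_5$ that would displace the point runs into the non-smooth vertex $(\lambda_2,\lambda_2,\lambda_2)$ before reaching its halfway mark. I expect this part to be delicate: I must check, direction by direction, that no admissible probe reaches these points while avoiding the vertex, and that points just off this segment are still reached by slightly tilted probes. For the latter I would try directions $(1,0,1)$ from $F_1$-type facets and $(0,-1,1)$-adjusted directions from $F_4$, and verify the halfway inequalities using the parameter constraints \autoref{cannot cut the S3 vertex} and $\frac{\lambda_2-\lambda_3}{r-\lambda_2}<\frac1{n+1}$. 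If this works, the exceptional set is exactly the segment $\left\{\left(\frac{3\lambda_2-\lambda_3}{2}-t,\frac{\lambda_2+\lambda_3}{2}+t,\lambda_2\right)\right\}$ together with $u_0$, which proves Proposition \ref{Proposition Probe-nondisplaceable set}.
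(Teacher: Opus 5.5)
Your overall strategy is the paper's: probes in coordinate directions from pairs of opposite facets, with the survivors being tie loci. Your facet data and the tie equations giving $u_0$ are also right. The gap is that you never carry out the case analysis that actually produces the exceptional set, and the step you single out as the crux is misdiagnosed.

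Write $l_1,\dots,l_6$ for the paper's affine distances and $\nu_i$ for the inward normal of $F_i$. A probe from $F$ in direction $\alpha$, with $\langle\alpha,\nu_F\rangle=1$, starts in the open facet and has $u$ strictly before its midpoint iff $l_G(u)>|\langle\alpha,\nu_G\rangle|\,l_F(u)$ for all $G\neq F$.

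The split that matters is by the sign of $u_3-\lambda_2$, not by which $l_i$ is minimal. The $\pm e_3$ probes from $F_4,F_3$ are valid at every interior point, since their initial points lie in the open facets because $u_1>\lambda_2$. They leave $u_1+u_2=2u_3$.
\begin{itemize}
\item If $u_3>\lambda_2$, the $\pm e_1$ probes from $F_3,F_6$ leave $l_3=l_6$. For $u_1<a$, the $\pm e_2$ probes from $F_1,F_2$ leave $l_1=l_2$. Together this is $u_0$.
\item If $u_3<\lambda_2$, the $\pm e_1$ probes run between $F_5$ and $F_6$ and leave $2u_1+nu_2=r+\lambda_2+n\lambda_3$. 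Combined with $u_1+u_2<2\lambda_2$ and $n>2$, this forces $r<(n+1)\lambda_2-n\lambda_3$, contradicting \eqref{cannot cut the S3 vertex}. So nothing survives. This is where the cut hypothesis enters, and it is absent from your plan.
\item If $u_3=\lambda_2$, the $-e_2$ probe from $F_2$ starts on the edge $F_2\cap F_4$, so your Step 1 claim ``displaced unless $u_2=\frac{\lambda_2+\lambda_3}{2}$'' fails there. Only the $e_2$ probe from $F_1$ remains, and the survivors are the segment.
\end{itemize}

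Consequently, no tilted probes are needed near the segment, and you need not prove anything about the segment itself. The segment survives not because probes run into the vertex: the $\pm e_3$ probes through it end on $F_3,F_4$ and fail only because $u$ is their midpoint. The real obstruction is that $l_2=l_3=l_4=l_5$ is the minimal distance there while $\nu_2+\nu_5=\nu_3+\nu_4$. The tilted directions you propose are in any case inadmissible:
\begin{itemize}
\item $(1,0,1)$ pairs to $0$ with $\nu_1$;
\item $(0,-1,1)$ pairs to $2$ with $\nu_4$;
\item $(-1,-1,0)$ pairs to $n+1$ with $\nu_6$.
\end{itemize}

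The criterion above also shows that your heuristics fail whenever the exit facet is $F_6$. Neither ``strictly closer to the starting facet than to the exit facet'' nor ``handles every $u$ whose minimum is attained by $F_1$ or $F_2$ alone'' holds there. Since $\langle e_2,\nu_6\rangle=-n$, the $e_2$ probe from $F_1$ exiting through $F_6$ needs $n\,l_1<l_6$.

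This is not cosmetic. In the region $u_1\ge a=r-n(\lambda_2-\lambda_3)$, the $-e_2$ probe from $F_2$ has no initial point in the open facet, and the $e_2$ probe from $F_1$ pays this factor $n$. There are interior points on the tie line $l_3=l_4=l_6$ there that no axis-parallel probe displaces. For example, take $n=3$ and $(\lambda_2,\lambda_3,r)=(2,0,11)$, and the point $u=(\tfrac{21}{4},\tfrac54,\tfrac{13}{4})$. Then $(l_1,\dots,l_6)=(\tfrac54,\tfrac34,2,2,\tfrac{13}{4},2)$, and $u$ is neither $u_0$ nor on the segment. None of the axis-parallel probes satisfies the criterion, whereas the probe from $F_2$ in direction $(3,-1,1)$, which is parallel to $F_6$, does. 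Your placeholder ``$(0,-1,0)+(\ast)$ adjusted to be integrally transverse'' is exactly where such an argument has to go, and it is not supplied. The paper's Lemmas \ref{Displaceable by left right probes}--\ref{Displaceable by front back probes} are likewise stated only for $\lambda_2<u_1<a$, so this piece of the tie line needs separate treatment in any write-up.
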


\begin{figure}[!h]
\centering
 \tikzset{every picture/.style={line width=0.75pt}} %set default line width to 0.75pt        
\resizebox{0.15\textwidth}{!}{     
\begin{tikzpicture}[x=0.75pt,y=0.75pt,yscale=-1,xscale=1]
%uncomment if require: \path (0,210); %set diagram left start at 0, and has height of 210

%Straight Lines [id:da09409162136470928] 
\draw    (286.7,92) -- (287.32,182) ;
%Straight Lines [id:da3584520501758405] 
\draw    (287.32,182) -- (376.53,92) ;
%Straight Lines [id:da7119344424365017] 
\draw    (331.09,19.2) -- (286.7,92) ;
%Straight Lines [id:da5508993121422686] 
\draw    (376.42,92) -- (398,83) ;
%Straight Lines [id:da7027760188499721] 
\draw    (286.81,92) -- (376.53,92) ;
%Straight Lines [id:da5833404863866453] 
\draw    (331.09,19.2) -- (397.83,58.27) ;
%Straight Lines [id:da9955875689086934] 
\draw    (398,58.27) -- (398,83) ;
%Straight Lines [id:da08076956055591866] 
\draw  [dash pattern={on 4.5pt off 4.5pt}]  (331.09,19.2) -- (332,160) ;
%Straight Lines [id:da3408084546341863] 
\draw [color={rgb, 255:red, 38; green, 7; blue, 241 }  ,draw opacity=1 ][line width=1.5]    (350,81.75) -- (376.53,92) ;
%Shape: Free Drawing [id:dp656338920626889] 
\draw  [color={rgb, 255:red, 208; green, 2; blue, 27 }  ,draw opacity=1 ][line width=3] [line join = round][line cap = round] (357.04,74.36) .. controls (357.37,74.36) and (357.7,74.36) .. (358.04,74.36) ;
%Straight Lines [id:da1182286597298704] 
\draw    (397.85,61) -- (376.42,92) ;
%Straight Lines [id:da8235281210897879] 
\draw    (287.32,182) -- (332,160) ;
%Straight Lines [id:da40183348366229354] 
\draw    (332,160) -- (398,83) ;

\end{tikzpicture}

}  

\caption{Possibly non-displaceable interior GC fibers}
\label{fig: possibly non-displaceable set}
\end{figure}
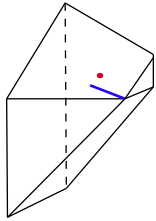

\begin{proof}
    This follows from Lemma \ref{Displaceable by top bottom probes}, Lemma \ref{Displaceable by left right probes}, Lemma \ref{Displaceable by top back probes}, and Lemma \ref{Displaceable by front back probes}, which we prove in the rest of the section. 
\end{proof}
Its worth noting that in the case when $r=\lambda_1$, $n=0$, and $\lambda_1=2\lambda_2-\lambda_3$,  we have $u_0 =\left(\frac{3\lambda_2-\lambda_3}{2} ,\frac{\lambda_2+\lambda_3}{2} ,\lambda_2 \right)$ 
and Proposition \ref{Proposition Probe-nondisplaceable set} coincide with the monotone case in Pabiniak \cite{Pabiniak} and Cho-Kim-Oh \cite{ChoKimOh}. 

To prove the lemmas, we first recall the definition of probes.  
\begin{defn}[Probes]
Let $w$ be a point of some facet $F$ of a rational polytope $\Delta$ and $\alpha\in \Z^n$ be such that $|\pair{\alpha, n_F}|=1$, where $n_F$ is an integral inner normal vector of $F$. The \textit{probe} $p_{F,\alpha}(w)$ with direction $\alpha\in \Z^n$ and initial point $w \in F$ is the half open line segment consisting of $w $ together with the points in $\interior \Delta$ that lie on the ray from $w$ in direction $\alpha$.    
\end{defn}

Using the method of probes, McDuff proved that most of the Lagrangian torus fibers in symplectic toric manifolds are displaceable by Hamiltonian diffeomorphisms.

\begin{lem}[Displacing toric fibers by Probes \cite{Probes} Lemma 2.4]
\label{probes lemma}
 Let $\Delta$ be a smooth moment polytope of a toric moment map $\Phi$. Suppose that a point $u\in \interior \Delta$ lies on the probe $p_{F,\alpha}(w)$. Then, if $w$ lies in the interior of $F$ and $u$ is less than halfway along $p_{F,\alpha}(w)$, the fiber $\Phi^{-1}(u)$ is displaceable.
\end{lem}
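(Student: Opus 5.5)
The plan is to reduce to a local model in which the fiber $\Phi^{-1}(u)$ becomes a product of a circle in a disk with a torus, and then displace the circle inside the disk. Let $\ell$ be the affine length of the probe $p_{F,\alpha}(w)$, measured in units of the lattice vector $\alpha$. Write $u = w + t\alpha$ with $0 < t < \ell/2$.

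\textbf{Step 1: change of lattice basis.} I first choose an integral basis adapted to the probe. Since $n_F$ is primitive and $|\pair{\alpha,n_F}|=1$, the lattice $\ker(n_F)\cap \Z^n$ has rank $n-1$, and together with $\alpha$ it spans $\Z^n$. Pick a basis $e_2,\dots,e_n$ of $\ker(n_F)\cap\Z^n$ and set $e_1=\alpha$. Applying the corresponding $GL(n,\Z)$ transformation, together with a translation, does not change the fibers as subsets of $M$, since it only reparametrizes the torus. In the new coordinates, $F$ lies in $\{x_1=0\}$ with inner normal $e_1^*$, the point $w$ is the origin, and the probe is the segment $\{(s,0,\dots,0) : 0\le s<\ell\}$.

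\textbf{Step 2: local model.} Because $w\in\interior F$ and the open probe lies in $\interior\Delta$, for any $\varepsilon>0$ there is a small open ball $V\subset\R^{n-1}$ around $0$ such that
\[ N = \{ (s,v) : 0\le s< \ell-\varepsilon,\ v\in V\} \]
is contained in $\Delta$. Moreover, $N$ meets $\partial\Delta$ only in the facet $F$, that is, in $\{s=0\}$. By the local normal form for toric moment maps (Delzant's local uniqueness, or action-angle coordinates away from the facet together with the standard model near a codimension-one face), $\Phi^{-1}(N)$ is equivariantly symplectomorphic to
\[ D(\ell-\varepsilon)\times T^{n-1}\times V \subset \C\times T^*T^{n-1}. \]
Here $D(a)$ denotes the open disk of area $a$ centered at $0$, with moment map $\pi|z|^2$. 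Under this identification the fiber $\Phi^{-1}(s,v)$ becomes $\{\pi|z|^2=s\}\times T^{n-1}\times\{v\}$.

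\textbf{Step 3: displacement.} Choose $\varepsilon$ with $t<(\ell-\varepsilon)/2$. The circle $\{\pi|z|^2=t\}$ bounds area $t$, which is less than half the area of $D(\ell-\varepsilon)$. Hence there is a compactly supported Hamiltonian isotopy of $D(\ell-\varepsilon)$ moving this circle off itself, for instance by pushing the disk it bounds into the complementary annulus, which has larger area. Let $H(z)$ be a generating Hamiltonian and take $H(z)\chi(v)$, where $\chi$ is a bump function on $V$ equal to $1$ near $0$. Its flow preserves the $v$-coordinate and is the product of the disk isotopy with the identity on $T^{n-1}$ near $v=0$. Therefore it displaces $\Phi^{-1}(u)$ from itself. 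Since its support is compact inside $\Phi^{-1}(N)$, it extends by the identity to a Hamiltonian diffeomorphism of $M$.

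\textbf{Main obstacle.} The step needing the most care is Step 2, the identification of $\Phi^{-1}(N)$ with the product model. In particular, I must check that the smoothness of $\Delta$ along $\interior F$ gives the standard $\C$-factor near $s=0$, and that the region near the far endpoint of the probe, which may hit a singular or non-smooth part of $\partial\Delta$, is excluded by shrinking to length $\ell-\varepsilon$. In our application to $\Delta\setminus\{(\lambda_2,\lambda_2,\lambda_2)\}$ this is exactly why the probe's starting point must lie in the interior of a facet. It also requires that the probe avoid the singular vertex except possibly at its far end, which the truncation handles.
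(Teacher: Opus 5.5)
Your proposal is correct and follows essentially the same route as McDuff's proof of \cite{Probes} Lemma 2.4, which the paper cites without reproducing. That route is: make an integral affine change of coordinates so that $\alpha=e_1$ and $F\subset\{x_1=0\}$; identify the preimage of a thin neighbourhood of the probe with a disc of area just under the probe length times $T^{n-1}\times V$; then displace the fiber with a cut-off Hamiltonian that moves the circle of area $t<\ell/2$ off itself inside the disc. Your truncation to length $\ell-\varepsilon$ is also what lets the argument apply in the paper's setting on $\Delta\setminus\{(\lambda_2,\lambda_2,\lambda_2)\}$, where a probe may end at the singular vertex.
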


In our case, the facets of $\Delta$ and their inner normal vectors are given by the following. 
\begin{align*}
   F_1 \text{ (Left):} 
   &  \quad 
  l_1(u) = u_2-\lam_3\geq 0  
   &   
   (0,1,0)
    \\
  F_2  \text{ (Right):} 
   &  \quad  
   l_2(u) = -u_2+\lam_2\geq 0  
   &   
    (0,-1,0)
    \\
   F_3  \text{ (Top):} 
   &  \quad  
    l_3(u) = u_1-u_3\geq 0  
   &    
   (1,0,-1)
    \\
    F_4 \text{ (Bottom):} 
   &  \quad  
   l_4(u) = u_3-u_2\geq 0  
   &    
    (0,-1,1)
    \\
  F_5  \text{ (Front):} 
   &  \quad  
   l_5(u) = u_1-\lam_{2}\geq 0  
   &    
    (1,0,0)
    \\
   F_6 \text{ (Back):} 
    & \quad
    l_6(u) = -u_1-nu_2+r+n\lam_3\geq 0
    & (-1,-n,0)
\end{align*}

The rest of the section is dedicated to the proof of Lemmas \ref{Displaceable by top bottom probes}, \ref{Displaceable by left right probes}, \ref{Displaceable by top back probes}, and \ref{Displaceable by front back probes}. 
\begin{lem} 
\label{Displaceable by top bottom probes}
For $u=(u_1,u_2,u_3)\in \interior \Delta$, the Gelfand--Cetlin fiber $\Phi^{-1}(u)$ is displaceable if $u_1+u_2-2u_3 \ne 0$. 
\end{lem}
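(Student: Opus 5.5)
The plan is to use vertical probes in the $u_3$-direction, one starting on the Bottom facet $F_4$ and one starting on the Top facet $F_3$, and then apply Lemma \ref{probes lemma}. The inner normals are $n_{F_4}=(0,-1,1)$ and $n_{F_3}=(1,0,-1)$. So $\alpha=(0,0,1)$ satisfies $\langle \alpha, n_{F_4}\rangle = 1$, and $\alpha'=(0,0,-1)$ satisfies $\langle \alpha', n_{F_3}\rangle = 1$. Both are admissible probe directions.

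Fix $u=(u_1,u_2,u_3)\in\interior\Delta$ and consider the vertical line $\{(u_1,u_2,t)\}$. Among the defining inequalities $l_1,\dots,l_6$, only $l_3=u_1-u_3$ and $l_4=u_3-u_2$ involve $u_3$. The other four are strictly positive at $u$ and do not depend on $t$. Hence the line meets $\Delta$ exactly in the segment $u_2\le t\le u_1$, which is nondegenerate since $u_1>u_3>u_2$. Its endpoints are
\[
w_4=(u_1,u_2,u_2)\in F_4, \qquad w_3=(u_1,u_2,u_1)\in F_3 .
\]

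Next I check that these endpoints lie in the interiors of their facets. At $w_4$ we have $l_3=u_1-u_2>0$, and $l_1,l_2,l_5,l_6$ take the same positive values as at $u$. So $w_4$ lies on no other facet and is in $\interior F_4$; the same argument puts $w_3$ in $\interior F_3$. Consequently the probe $p_{F_4,\alpha}(w_4)$ is the half-open segment from $w_4$ up to $w_3$, and $p_{F_3,\alpha'}(w_3)$ is the same segment traversed downward. The midpoint of the segment is $u_3=\tfrac{u_1+u_2}{2}$.

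Now split into the two cases allowed by the hypothesis $u_1+u_2-2u_3\neq 0$:
\begin{itemize}
\item If $u_3<\tfrac{u_1+u_2}{2}$, then $u$ is less than halfway along $p_{F_4,\alpha}(w_4)$.
\item If $u_3>\tfrac{u_1+u_2}{2}$, then $u$ is less than halfway along $p_{F_3,\alpha'}(w_3)$.
\end{itemize}
In either case Lemma \ref{probes lemma} gives that $\Phi^{-1}(u)$ is displaceable.

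The main obstacle is justifying the use of Lemma \ref{probes lemma}, since $\Delta$ is not a smooth moment polytope: it has the non-toric vertex $(\lambda_2,\lambda_2,\lambda_2)$. I would argue as follows.
\begin{itemize}
\item The whole closed probe segment satisfies $u_2<\lambda_2$, so it stays a positive distance from this vertex.
\item McDuff's displacing Hamiltonian is built from the toric structure in a neighbourhood of $\Phi^{-1}$ of the probe: the local model near $w\in\interior F$ is $\C\times T^{2}\times\R^2$. That neighbourhood lies inside the open toric region $\Phi^{-1}(\Delta\setminus\{(\lambda_2,\lambda_2,\lambda_2)\})$.
\item Therefore the Hamiltonian can be cut off to have compact support there and extended by zero to all of $M$.
\end{itemize}
I expect a short remark to suffice for this, citing that the proof of \cite{Probes} Lemma 2.4 is local to a neighbourhood of the probe, together with the smoothness of $\Delta\setminus\{(\lambda_2,\lambda_2,\lambda_2)\}$ noted above.
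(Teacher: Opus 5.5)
Your proposal is correct and follows the same route as the paper: vertical probes in the directions $\pm(0,0,1)$ from the Top and Bottom facets, with the halfway condition $l_3(u)=l_4(u)$, i.e.\ $u_1+u_2=2u_3$, and Lemma \ref{probes lemma} applied separately for each sign of $u_1+u_2-2u_3$. Your extra checks make explicit what the paper only asserts by noting that $\Delta\setminus\{(\lambda_2,\lambda_2,\lambda_2)\}$ is a toric moment polytope: the integral transversality $\langle\alpha,n_F\rangle=1$, the initial points lying in the facet interiors, and the closed probe staying in $\{u_2<\lambda_2\}$, away from the non-toric vertex, so that McDuff's local argument applies.
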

\begin{proof}
We will use the probes in the directions $\pm (0,0,1)$ from the top and bottom facets to displace the fibers. 

The question meant to say that the probability that any player answers any question correctly is p.  

 First, consider the probes of the form $p_{F_3,\alpha_3}(w)$ with initial points $w$ from $F_3$ with direction $\alpha_3 = (0,0,-1)$. The corresponding rays all intersect facet $F_4$. 
A point $u\in \interior \Delta$ lies halfway along some $ p_{F_3,\alpha_3}(w)$ if and only if 
\[ l_3(u)=l_4(u) \quad \Longleftrightarrow \quad u_1-u_3 = u_3-u_2  \quad \Longleftrightarrow \quad  u_1+u_2-2u_3 =0.  \]
By Lemma \ref{probes lemma}, $p_{F_3,\alpha_3}$ can displace the interior Gelfand fiber $\Phi^{-1}(u)$ if $u_1+u_2-2u_3 < 0$. 

Similarly, the probes $p_{F_4,\alpha_4}$ with initial points from $F_4$ with direction $\alpha_4 = (0,0,1)$ can displace the interior Gelfand fiber $\Phi^{-1}(u)$ if $u_1+u_2-2u_3 >0$. 
\end{proof}
Note that $C = \{ u_1+u_2-2u_3=0 \} \cap \interior \Delta $ is the interior of the convex hull of 
\[ \left\{ \left(\lambda_2, \lambda_2, \lambda_2\right), \left(a,\lambda_2, \frac{a+\lambda_2}{2}\right), \left(r,\lambda_3,\frac{r+\lambda_3}{2}\right), \left(\lambda_2,\lambda_3,\frac{\lambda_2+\lambda_3}{2}\right) \right\},  \]
where $a = r-n(\lambda_2-\lambda_3)$. 
\begin{lem}
\label{Displaceable by left right probes}
For $u=(u_1,u_2,u_3)\in \interior \Delta$ with $\lambda_2<u_1<a$, the Gelfand--Cetlin fiber $\Phi^{-1}(u)$ is displaceable if 
either 
\[ \lambda_2<  u_3< a,\quad u_2 \ne \frac{\lambda_2+\lambda_3}{2} \]
or 
\[
u_3 = \lambda_2,\quad  u_2<\frac{\lambda_2+\lambda_3}{2}.  
\]
\end{lem}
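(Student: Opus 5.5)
The plan is to apply Lemma \ref{probes lemma} to probes parallel to the $u_2$-axis. These start on the Left facet $F_1$ with direction $\alpha_1=(0,1,0)$, or on the Right facet $F_2$ with direction $\alpha_2=(0,-1,0)$. Both directions satisfy $|\pair{\alpha,n_F}|=1$ with the listed inner normals $(0,\pm1,0)$. Along such a probe $u_1$ and $u_3$ stay fixed and only $u_2$ moves. So for a given $u$ the relevant probes are the ones with initial points $w_1=(u_1,\lambda_3,u_3)\in F_1$ and $w_2=(u_1,\lambda_2,u_3)\in F_2$.

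\emph{Step 1: where the probes exit.} Starting at $w_1$ and increasing $u_2$, the constraints that can become active are $l_2=\lambda_2-u_2\ge 0$, $l_4=u_3-u_2\ge0$ and $l_6=r+n\lambda_3-u_1-nu_2\ge0$.
\begin{itemize}
\item If $u_1<a=r-n(\lambda_2-\lambda_3)$, then $l_6>0$ at $u_2=\lambda_2$, so the Back facet is never reached before the Right one.
\item If $u_3\ge\lambda_2$, then the Bottom facet is not reached before $u_2=\lambda_2$.
\end{itemize}
Hence under the hypotheses the probe from $w_1$ runs all the way to $u_2=\lambda_2$ and has length $\lambda_2-\lambda_3$. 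Its midpoint is $u_2=\frac{\lambda_2+\lambda_3}{2}$. Symmetrically, the probe from $w_2$ ends on $F_1$ and has the same midpoint.

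\emph{Step 2: interiority of the initial points.} The point $w_1$ lies in the interior of $F_1$ when all other $l_i$ are strictly positive at $w_1$:
\begin{itemize}
\item $u_1>u_3$ holds because $u\in\interior\Delta$;
\item $u_3>\lambda_3$ holds since $u_3\ge\lambda_2>\lambda_3$;
\item $u_1>\lambda_2$ is assumed;
\item $l_6(w_1)=r-u_1>0$ follows from $u_1<a<r$;
\item $\lambda_2>\lambda_3$ gives $l_2(w_1)>0$.
\end{itemize}
For $w_2$ the same check applies, but $l_4(w_2)=u_3-\lambda_2$ is now strictly positive only when $u_3>\lambda_2$. Also $l_6(w_2)>0$ is exactly $u_1<a$. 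Note that $a>\lambda_2$ by \eqref{cannot cut the S3 vertex}, so the range $\lambda_2<u_1<a$ is nonempty.

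\emph{Step 3: conclude with Lemma \ref{probes lemma}.}
\begin{itemize}
\item If $\lambda_2<u_3$ (the bound $u_3<a$ is then automatic from $u_3<u_1<a$), both probes are admissible. Then $u$ lies strictly less than halfway along one of them unless $u_2=\frac{\lambda_2+\lambda_3}{2}$: the Left probe handles $u_2<\frac{\lambda_2+\lambda_3}{2}$ and the Right probe handles $u_2>\frac{\lambda_2+\lambda_3}{2}$.
\item If $u_3=\lambda_2$, the point $w_2$ lies on the edge $F_2\cap F_4$ and the Right probe is not allowed. The Left probe still starts in $\interior F_1$ and exits at that edge, with length $\lambda_2-\lambda_3$. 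This gives displaceability exactly when $u_2<\frac{\lambda_2+\lambda_3}{2}$, matching the second case of the statement.
\end{itemize}

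The main obstacle I expect is justifying Lemma \ref{probes lemma} here, since $\Delta$ is only a smooth moment polytope away from the singular vertex $(\lambda_2,\lambda_2,\lambda_2)$. I would argue that McDuff's proof is local to a neighborhood of the probe: the displacing Hamiltonian is supported in the preimage of a small neighborhood of the probe segment. Every probe used has $u_1>\lambda_2$, so its closure stays a positive distance from the singular vertex. Therefore the toric structure on $\Phi^{-1}(\Delta\setminus\{(\lambda_2,\lambda_2,\lambda_2)\})$ suffices, as the paper already remarked before Lemma \ref{Displaceable by top bottom probes}. A secondary point to watch is the exit at the edge $F_2\cap F_4$ in the case $u_3=\lambda_2$. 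McDuff's lemma only constrains the initial point, but I would double-check that the exit at a non-facet stratum causes no issue in the local model.
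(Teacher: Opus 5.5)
Your proposal is correct and follows the paper's proof. Both use probes in directions $\pm(0,1,0)$ from the Left and Right facets, each of affine length $\lambda_2-\lambda_3$ with midpoint $u_2=\frac{\lambda_2+\lambda_3}{2}$, and both note that the Right probe is unavailable when $u_3=\lambda_2$; your additional checks fill in details the paper leaves implicit, namely that the initial points lie in the facet interiors, that the Back and Bottom facets are not reached, and that the probes stay away from the singular vertex because $u_1>\lambda_2$.
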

\begin{proof}
We will use some probes in the directions $\pm (0,1,0)$ from the left and right facets to displace the fibers.

Consider the probes $p_{F_1, \alpha_1}$ with $\alpha_1=(0,1,0)$. If $\lambda_2\leq  w_3< a$ and $\lambda_2< w_1< a$, then the ray corresponding to $p_{F_1, \alpha_1}(w)$ intersects $F_2$. 
$u$ lies halfway along such a probe if and only if $u_2-\lambda_3=\lambda_2-u_2$.  
Thus, such probes $p_{F_1, \alpha_1}$ can displace the fibers $\Phi^{-1}(u)$ if
$u_2< \frac{\lambda_2+\lambda_3}{2}$, $\lambda_2\leq  u_3< a, \lambda_2<u_1<a$.  

Similarly, the probes $p_{F_2, \alpha_2}$ with $\alpha_2=(0,-1,0)$ can displace the fibers $\Phi^{-1}(u)$ if
$u_2> \frac{\lambda_2+\lambda_3}{2}$, $\lambda_2<  u_3< a, \lambda_2<u_1<a$.    
\end{proof}
 
\begin{lem}
\label{Displaceable by top back probes}
For $u=(u_1,u_2,u_3)\in \interior \Delta$ with $\lambda_2<u_1<a$, the Gelfand--Cetlin fiber $\Phi^{-1}(u)$ is displaceable if 
\[ u_3>\lambda_2, \quad 2u_1+nu_2-u_3\ne r+n\lambda_3. \]
\end{lem}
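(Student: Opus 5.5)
The plan is to repeat the argument of Lemmas \ref{Displaceable by top bottom probes} and \ref{Displaceable by left right probes}, this time with probes running between the top facet $F_3$ and the back facet $F_6$. Both probe families use directions $\pm(1,0,0)$. We have
\[ \pair{(1,0,0), n_{F_3}} = \pair{(1,0,0),(1,0,-1)} = 1, \qquad \pair{(1,0,0), n_{F_6}} = \pair{(1,0,0),(-1,-n,0)} = -1 . \]
So $\alpha_3' = (1,0,0)$ is admissible for probes starting on $F_3$, and $\alpha_6 = (-1,0,0)$ is admissible for probes starting on $F_6$. Along such a ray only $u_1$ changes. Hence $l_1$, $l_2$ and $l_4$ stay constant, $l_5$ increases when moving in direction $(1,0,0)$, and $l_3$, $l_6$ change at unit rate with opposite signs.

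First consider the probe $p_{F_3,\alpha_3'}(w)$ through a given $u\in\interior\Delta$ with $u_3>\lambda_2$. Its initial point is $w=(u_3,u_2,u_3)$. I will check that $w$ lies in the interior of $F_3$:
\begin{itemize}
\item $\lambda_3<u_2<\lambda_2$ and $u_3>u_2$ hold because $u$ is interior;
\item $w_1-\lambda_2 = u_3-\lambda_2>0$ is exactly the hypothesis $u_3>\lambda_2$;
\item $l_6(w) = l_6(u) + (u_1-u_3) > 0$;
\item $w$ is not the singular vertex $(\lambda_2,\lambda_2,\lambda_2)$.
\end{itemize}
The ray leaves $\interior\Delta$ exactly when $l_6=0$, at the point $w'=(r+n\lambda_3-nu_2,\,u_2,\,u_3)$, because $l_5$ only grows along it. Moreover $w'$ lies in the interior of $F_6$, since at $w'$ the remaining defining inequalities are strict.

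Since the affine length along the probe is measured by $l_3$, which is $0$ at $w$ and $l_3(u)+l_6(u)$ at $w'$, the point $u$ is less than halfway along the probe iff $l_3(u)<l_6(u)$. This is equivalent to
\[ 2u_1+nu_2-u_3 < r+n\lambda_3. \]
In that case Lemma \ref{probes lemma} displaces $\Phi^{-1}(u)$. Here I use that $\Delta\setminus\{(\lambda_2,\lambda_2,\lambda_2)\}$ is a smooth toric moment polytope and that the probe stays away from the singular vertex.

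Symmetrically, I will use the probe $p_{F_6,\alpha_6}(w')$ with the same $w'$. Moving in direction $(-1,0,0)$ decreases $u_1$, and the ray exits at $l_3=0$, i.e. at $u_1=u_3$. It must do so before $l_5=u_1-\lambda_2$ vanishes, and this holds precisely because $u_3>\lambda_2$. This is where that hypothesis is really needed: without it the probe would end on the front facet or at the singular vertex. The same computation shows that $u$ is less than halfway along this probe iff $l_6(u)<l_3(u)$, which gives displaceability when $2u_1+nu_2-u_3>r+n\lambda_3$. Combining the two cases proves the lemma.

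The main obstacle is not the halfway computation but verifying that the initial points are in the interiors of the facets. In particular I must check that $w'$ lies in the relative interior of $F_6$ and not on one of its edges, namely $F_6\cap F_5$ or the edges at $u_2=\lambda_2,\lambda_3$. I also need to keep track of where the extra hypothesis $\lambda_2<u_1<a$ enters. I expect it to be used only to guarantee that the relevant portion of $F_6$ is the one adjacent to $F_3$, so that both probes meet the intended facets.
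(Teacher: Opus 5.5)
Your proposal is correct and follows the paper's proof, which uses the same two probe families $p_{F_3,(1,0,0)}$ and $p_{F_6,(-1,0,0)}$ with halfway condition $l_3(u)=l_6(u)$. Your explicit checks that $w=(u_3,u_2,u_3)$ lies in the interior of $F_3$ and $w'=(r+n\lambda_3-nu_2,u_2,u_3)$ lies in the interior of $F_6$ (all other $l_i$ strictly positive there) fill in details the paper leaves implicit; the hypothesis $\lambda_2<u_1<a$ is in fact never used, and in your side remark, for $u_3=\lambda_2$ the $F_6$-probe would end on the edge $F_3\cap F_5$, not at the singular vertex.
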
 
\begin{proof}
We will use the probes in the directions $\pm (1,0,0)$ from the top and back facets to displace the fibers.
As before, the probes $p_{F_6,\alpha_6}(w)$ with $\alpha_6 = (-1,0,0)$ and $w_3>\lambda_2$ and the probes $p_{F_3,\alpha_3'}$ with $\alpha_3' = (1,0,0)$ 
displace the fibers $\Phi^{-1}(u)$ with 
\[ u_3>\lambda_2, \quad -u_1-nu_2+r+n\lam_3\ne u_1-u_3. \]
\end{proof}

\begin{lem}
\label{Displaceable by front back probes}
For $u=(u_1,u_2,u_3)\in \interior \Delta$ with $\lambda_2<u_1<a$, the Gelfand--Cetlin fiber $\Phi^{-1}(u)$ is displaceable if 
\[ u_3<\lambda_2, \quad 2u_1+nu_2\ne r+\lambda_2+n\lambda_3. \]
\end{lem}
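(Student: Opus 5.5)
The plan is to use McDuff's probes in the directions $\pm(1,0,0)$, emanating from the front facet $F_5$ (where $l_5(u)=u_1-\lambda_2$) and the back facet $F_6$ (where $l_6(u)=-u_1-nu_2+r+n\lambda_3$), in the same way as Lemma \ref{Displaceable by top back probes}. The only difference is that the front facet takes the place of the top facet.

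First, check that these are genuine probes in the sense of the definition. For $F_5$ with inner normal $(1,0,0)$, take $\alpha_5=(1,0,0)$, so $|\pair{\alpha_5,n_{F_5}}|=1$. For $F_6$ with inner normal $(-1,-n,0)$, take $\alpha_6=(-1,0,0)$, so $\pair{\alpha_6,(-1,-n,0)}=1$.

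Next, identify the region in which a ray in direction $\pm(1,0,0)$ runs from $F_5$ to $F_6$. Fix $(u_2,u_3)$ and let $u_1$ vary. The constraints involving $u_1$ are $u_1\ge \lambda_2$, $u_1\ge u_3$ and $u_1\le r+n\lambda_3-nu_2$.
\begin{itemize}
\item When $u_3<\lambda_2$, the binding lower bound is $u_1\ge\lambda_2$ (the front facet) rather than the top facet $u_1\ge u_3$.
\item The upper bound is always the back facet.
\end{itemize}
So the horizontal segment through $u$ runs from $w=(\lambda_2,u_2,u_3)\in F_5$ to the point of $F_6$ with $u_1=r+n\lambda_3-nu_2$. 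I must check that $w$ lies in the relative interior of $F_5$, i.e.\ that all other inequalities are strict at $w$:
\begin{itemize}
\item $\lambda_3<u_2<\lambda_2$ holds since $u$ is interior;
\item $u_3>u_2$ holds since $u$ is interior;
\item $\lambda_2-u_3>0$ holds by the hypothesis $u_3<\lambda_2$;
\item $l_6(w)=r+n\lambda_3-\lambda_2-nu_2>0$ follows from $l_6(u)>0$ and $u_1>\lambda_2$.
\end{itemize}
The endpoint on $F_6$ should similarly lie in the interior of $F_6$. The condition $\lambda_2<u_1<a$ and the bound $u_3<\lambda_2<u_1$ keep it away from $F_3$, and the other constraints do not involve $u_1$, so they remain strict. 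I also need to confirm that the endpoint avoids the singular vertex $(\lambda_2,\lambda_2,\lambda_2)$. Since $u_3<\lambda_2$, the whole segment lies away from that vertex, so it sits inside the smooth toric part $\Delta\setminus\{(\lambda_2,\lambda_2,\lambda_2)\}$ where Lemma \ref{probes lemma} applies. Checking that this local application is legitimate is the step I expect to need the most care; I would argue as in the preceding lemmas, working in the toric open subset.

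Finally, the halfway condition. Since $\alpha$ is primitive with unit pairing against both normals, the affine distance along the probe is measured by $l_5$ and $l_6$. Hence $u$ is less than halfway along $p_{F_5,\alpha_5}(w)$ exactly when $l_5(u)<l_6(u)$, which is equivalent to $2u_1+nu_2<r+\lambda_2+n\lambda_3$. It is less than halfway along the probe from $F_6$ exactly when $2u_1+nu_2>r+\lambda_2+n\lambda_3$. Lemma \ref{probes lemma} then displaces $\Phi^{-1}(u)$ whenever $2u_1+nu_2\ne r+\lambda_2+n\lambda_3$, which proves the lemma.
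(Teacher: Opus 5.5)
Your proposal is correct and is essentially the paper's proof. It uses probes in directions $\pm(1,0,0)$ from the front facet $F_5$ and the back facet $F_6$, and the halfway condition $l_5(u)=l_6(u)$, i.e.\ $2u_1+nu_2=r+\lambda_2+n\lambda_3$, decides which probe displaces the fiber. Your extra checks fill in details the paper leaves implicit: that both initial points lie in the relative interiors of $F_5$ and $F_6$, and that the segment avoids the singular vertex $(\lambda_2,\lambda_2,\lambda_2)$, so McDuff's lemma can be applied within the toric part. The hypothesis $u_1<a$ is not actually used in this argument.
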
 
\begin{proof}
We will use the probes in the directions $\pm (1,0,0)$ from the front and back facets to displace the fibers.   

If $u_3<\lambda_2$, the ray corresponding to $p_{F_6, \alpha_6}$ intersects facet $F_5$. 
A point $u\in \interior \Delta$, $u_3<\lambda_2$ lies halfway along some $ p_{F_6,\alpha_6}(w)$ if and only if 
\[ -u_1-nu_2+r+n\lambda_3 = u_1-\lambda_2 \quad \Longleftrightarrow \quad 2u_1+nu_2 =r+\lambda_2+n\lambda_3.  \]
By Lemma \ref{probes lemma}, $p_{F_6,\alpha_6}$ can displace the interior Gelfand--Cetlin fiber $\Phi^{-1}(u)$ if $u_3< \lambda_2$ and $2u_1+nu_2 > r+\lambda_2+n\lambda_3$. 
Similarly, we can displace the fibers  $\Phi^{-1}(u)$ with $u_3< \lambda_2$ and $2u_1+nu_2 < r+\lambda_2+n\lambda_3$ using $p_{F_5,(1,0,0)}$. 
\end{proof}
In summary, we managed to displace the fibers over the interior points complementary to the colored points in Figure \ref{fig: possibly non-displaceable set}. 

\section{Generation of the Fukaya category}
\label{section Generation of the Fukaya category}

In light of Abouzaid's and Abouzaid-Fukaya-Oh-Ohta-Ono's split-generation criteria (\cite{AbouzaidGeneration},  \cite{AFOOO}), we study the properties of the open-closed map defined on the Hochschild homology of the subcategory generated by the distinguished critical points found in Section \ref{section Critical points of the potential}. 
The criterion \cite{AFOOO}, in its general form, asserts that, for a compact symplectic manifold $(M,\omega)$, if the open-closed map on the Hochschild homology of a full subcategory $\mathcal{B}$ of the Fukaya category $\fuk(M)$ of $M$ hits the unit of the quantum cohomology of $M$, then the objects of $\mathcal{B}$ split-generate $\fuk(M)$. 

We remark that, in this section, we will consider the Fukaya category defined using the unbroken (i.e. no matching conditions for broken manifolds) holomorphic treed disks as in \cite{VWXBlowups}. 
In particular, we consider the full-subcategory $\mathcal{B}$  on the collection $\mathscr{L}=\{L_i \mid 1\leq i \leq 6\}$ of Lagrangians, where $L_i$ is the object associated with the local system  $\rho_i\in \Hom(\pi_1(L), U_{\Lambda})$  on $L=\Phi^{-1}(u_0)$ corresponding to the critical points found in Section \ref{section Critical points of the potential}. Then 
the surjectivity of the open-closed map  
\begin{equation}
 OC:   HH_*\left(\mathcal{B}\right)\to QH^*(M,  \Lambda),   
\end{equation}
implies that the collection split-generates the Fukaya category of $M$. 

The inclusion of $CF(L_i,L_i)$ in $CC_*(\mathcal{B})$ induces a map 
\[ HH_*(CF(L_i,L_i))\to HH_*(\mathcal{B}) \qquad \forall 1\leq i \leq 6. 
\] 
By composing these maps with the open-closed map, we obtain 
\begin{equation}
\label{length 0 OC}  \bigoplus_{i=1}^6 HH_*(CF(L_i,L_i))\to HH_*(\mathcal{B})\xrightarrow{OC} QH^*(M, \Lambda),   
\end{equation} 
which we again denote by $OC$.   
We will prove the following. 
\begin{thm}
\label{OC is an isomorphism}
    The map $OC: \bigoplus_{i=1}^6 HH_*(CF(L_i,L_i))\to  QH^*(M, \Lambda) $ in \autoref{length 0 OC} is an isomorphism.  
\end{thm}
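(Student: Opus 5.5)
The plan is to reduce the statement to a dimension count plus an injectivity statement, with injectivity detected by the eigenvalue decomposition of quantum multiplication by $c_1$ (equivalently, by the critical values of $W$).

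\textbf{Step 1: ranks agree.} By Theorem \ref{thm Existence of a non-displaceable Lagrangian}, each $CF(L_i,L_i)$ has cohomology $H^*(T^3;\Lambda)$. Since the corresponding critical point of $W$ is non-degenerate (checked stage-wise in Proposition \ref{Proposition critical points of po}), the $A_\infty$ algebra $CF(L_i,L_i)$ is, as in the toric case (Cho, FOOO), a Clifford-type deformation of the exterior algebra $\Lambda^*(\Lambda^3)$ with respect to the Hessian quadratic form. For non-degenerate Hessian, $HH_*(CF(L_i,L_i))$ is one-dimensional, spanned by the class of the unit (point class in top degree after the Koszul/Morita identification with a Clifford algebra). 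Hence the source has rank $6$. The target also has rank $6$, since $QH^*(M,\Lambda)\cong H^*(M;\Lambda)$ additively and $\dim H^*(M)=6$, as remarked after Proposition \ref{Proposition critical points of po}. This can be checked either from the Euler characteristic of the GC system (the fixed points of the torus action on $M$) or because the cut only modifies $\mathcal{O}_\lambda$ near one face. It therefore suffices to show that $OC$ is injective, i.e. that each $OC_i:=OC|_{HH_*(CF(L_i,L_i))}$ is nonzero and that the images are linearly independent.

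\textbf{Step 2: eigenvalue splitting.} The standard fact (Auroux, Sheridan, FOOO) is that closed–open then open–closed intertwines quantum multiplication by $c_1(M)$ with multiplication by the potential value. More precisely, $\mathcal{CO}^0(c_1)=W(L_i,\rho_i)\cdot 1_{L_i}$, and $OC$ is a $QH^*$-module map, so $c_1\star OC_i(\alpha)=w_i\,OC_i(\alpha)$. The critical values in the tables are $\pm w_1,\pm w_2,\pm w_3$ with $w_j=2T^{S_1}+3\zeta^{?}T^{S_2}+\cdots$, and these are pairwise distinct. So the images $OC_i(\alpha)$ lie in distinct generalized eigenspaces of $c_1\star$, and linear independence reduces to the nonvanishing of each $OC_i$. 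I would establish the $\mathcal{CO}^0(c_1)$ identity in the unbroken treed-disk model of \cite{VWXBlowups} by a divisor argument, with $c_1$ represented by the sum of the toric divisors $D_i$ of $M$ away from the singular vertex, in parallel with the weak divisor equation used in the proof of Theorem \ref{thm Existence of a non-displaceable Lagrangian}.

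\textbf{Step 3: nonvanishing of each $OC_i$ (the main obstacle).} To show $OC_i\neq 0$ I would use the Cardy-type pairing: $\langle OC_i(a),OC_i(b)\rangle_{PD}$ equals the Mukai/Shklyarov pairing on $HH_*$, which for a non-degenerate critical point is $\pm T^{?}/\det\mathrm{Hess}(W)(y_i)$ times a unit. This is nonzero because the Hessians in the tables are nonzero. Alternatively, I would compute $OC_i(\text{pt-class generator})$ to leading order as the fundamental class of $L$ pushed forward, i.e. $[L_i]\neq0$ paired against a generator detected by $\mathcal{CO}$. The main obstacle is that $M$ is not toric at the vertex $(\lambda_2,\lambda_2,\lambda_2)$ and the disk counts near the $S^3$ fiber are not directly classified. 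I would handle this by working in the split/broken model, where Lemma \ref{lem no rigid spheres in M-} shows that the $M_-$ pieces contribute only at higher energy, so the leading-order computations of Step 2 and Step 3 are toric in $M_+$. A valuation argument then shows that the nonzero leading terms persist. Combining the three steps gives injectivity, and hence bijectivity, of $OC$.
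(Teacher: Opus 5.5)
Your main line of argument is sound, but it differs from the paper's at the linear-independence step.

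\textbf{Step 1.} This is the paper's argument in outline. The paper makes it precise: the weak divisor equation on basic disks gives the Clifford relation with the Hessian form, intrinsic formality passes from $HF$ to $CF$, and Kassel's computation gives $HH_*\cong\Lambda$.

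\textbf{Linear independence.} The paper proves Poincar\'e-orthogonality $\langle OC[v_i],OC[v_j]\rangle_{PD_M}=0$ for $i\neq j$. It uses the $c_1\star$-eigenvalue argument when $W(L_i)\neq W(L_j)$, and $HF(L_i,L_j)=0$ plus the annulus (Cardy) degeneration when the values coincide. It then shows the diagonal pairings are nonzero. You instead observe that the six critical values are pairwise distinct. Indeed, the $T^{S_1}$-coefficient is $y_0+y_0^{-1}=\pm2$, and for fixed $y_0$ the $T^{S_2}$-coefficient $3x_0/z_0$ takes three distinct values. So the images lie in independent generalized eigenspaces of $c_1\star$, and you only need $OC_i\neq0$. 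This makes the coincident-value case and the off-diagonal pairing computation unnecessary. In exchange, the paper's route would survive repeated critical values and yields block-orthogonality of $OC$ for the Poincar\'e pairing.

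\textbf{Step 3.} Your primary nonvanishing argument (Cardy relation plus nondegeneracy of the Mukai pairing on a nondegenerate Clifford algebra) is the same idea as the paper's. The paper combines the annulus argument with $v_i\ast v_i\in\Lambda^{\times}\cdot\mathbf{1}$ modulo higher order.

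\textbf{Corrections.} Three statements should be fixed.
\begin{itemize}
\item $HH_*$ of the $\mathbb{Z}/2$-graded Clifford algebra on the three-dimensional space $H^1(L;\Lambda)$ is \emph{not} spanned by the unit. Since $[e_j,e_j]_{gr}=2q(e_j)\cdot\mathbf{1}$, the unit is a graded commutator and vanishes in $HH_0$. The generator is the top monomial $e_1e_2e_3$, i.e.\ the point class $[v_i]$, as the appendix computation shows.
\item Your alternative in Step 3 computes a zero class: $[L_i]=0$ in $H_3(M;\Lambda)$ because $b_3(M)=0$. This is consistent with the unit vanishing in $HH_*$. The correct statement is that the energy-zero part of $OC$ on the point class is $i_*(\mathrm{pt}_L)=\mathrm{pt}_M$.
\item For the point-class normalization the self-pairing is $\pm\det\mathrm{Hess}$, of positive valuation, not $1/\det\mathrm{Hess}$. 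Nonvanishing is unaffected.
\end{itemize}

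Once the second point is fixed, it gives the quickest proof of nonvanishing. Nonconstant disks have positive area and $\rho_i$ takes values in $U_\Lambda$. So the cocycle $OC(v_i)$ reduces modulo $\Lambda_+$ to $\mathrm{pt}_M\neq0$ in the classical Morse complex of $M$. Since $H^*(M;\Lambda_0)=H^*(M;\mathbb{C})\otimes\Lambda_0$ is torsion-free, $OC[v_i]\neq0$. This also removes the need for any disk analysis near the singular vertex in Step 3, since only constant disks enter at energy zero.
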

By \cite{GoertschesGKM} Remark 4.7,
\[ H^*(M;\Z)\cong 
\Z[X_1, X_2]/(X_1^2 + 3X_1X_2 + X_2^2 , X_1^2X_2 + X_1X_2^2 ), \]
where $X_1,X_2$ are of degree $2$.  
We consider 
\[ H^*(M;\Lambda) \cong H^*(M;\Z)\otimes_{\Z}\Lambda. \]
Thus, $b_0=1, b_2 = 2, b_4=2, b_6=1$, 
% \[ b_0=\dim_{\Lambda} H^0(M;\Lambda) =\dim \Span_{\Lambda}\{1\} = 1,  \]
% \[ b_2=\dim_{\Lambda} H^2(M;\Lambda) =\dim \Span_{\Lambda}\{X_1,X_2\} = 2, 
% \]
% \[ b_4=\dim_{\Lambda} H^4(M;\Lambda) =\dim \Span_{\Lambda}\{X_1^2, X_1X_2\} = 2,  \] 
% \[
% b_6=\dim_{\Lambda} H^6(M;\Lambda) =\dim \Span_{\Lambda}\{X_1^2X_2\} =1, 
% \]
and $H^k(M;\Lambda)=0$ for all other $k$. 
Therefore,  $\dim QH^*(M,\Lambda) = 6$. 

\begin{proof}
Thus, it suffices to prove that the domain is also of dimension $6$ and that \autoref{length 0 OC}
is a surjective homomorphism. The statements are shown in Proposition \ref{Hochschild of Clifford is rank 1} and Corollary \ref{Surjectivity of OC}. 
\end{proof}

\subsection{\texorpdfstring{Dimension of $HH_*(CF(L_i,L_i))$}{Dimension of HH_*(CF(L_i,L_i))}}

To show that the domain $\bigoplus\limits_{i=1}^{6}HH_*(CF(L_i,L_i))$ of \autoref{length 0 OC} is $6$-dimensional, 
we appeal to the property that the Floer cohomology $HF(L_i,L_i)$ are Clifford algebras associated to non-degenerate quadratic forms, which are intrinsically formal with one-dimensional Hochschild homology.

The proof applies a similar argument as that in \cite{VWXBlowups} Corollary 6.9.  
\begin{prop}
\label{Hochschild of Clifford is rank 1}
$ HH_*(CF(L_i,L_i)) $ is one-dimensional for $1\leq i \leq 6$. 
\end{prop}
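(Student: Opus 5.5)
The plan is to show that, for each $i$, the $A_\infty$ algebra $CF(L_i,L_i)$ is quasi-isomorphic to the Clifford algebra $\Cl(Q_i)$ of a non-degenerate quadratic form $Q_i$ on $H^1(L;\Lambda)\cong\Lambda^3$. The proof of Proposition \ref{Hochschild of Clifford is rank 1} then reduces to the classical computation $HH_*(\Cl(Q_i))\cong\Lambda$, as in \cite{VWXBlowups} Corollary 6.9.

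The first step is the cohomology-level computation. By Theorem \ref{thm Existence of a non-displaceable Lagrangian}, $\m_1$ vanishes on $CF(L_i,L_i)$ after passing to the canonical (minimal) model, so $HF(L_i,L_i)\cong H^*(L;\Lambda)$ as a vector space. To identify the product, I would use the weak divisor equation established in that proof, now for $\m_2$ and in its symmetrized form:
\[ \m_2(c_1,c_2)+\m_2(c_2,c_1) = \partial_{c_1}\partial_{c_2} W(y)\, x^{\whitet}, \qquad c_1,c_2\in H^1(L;\Lambda). \]
This follows by forgetting both inputs in the same manner as the $\m_1$ argument. Hence $H^1$ generates $HF(L_i,L_i)$ subject to the Clifford relations $c_1c_2+c_2c_1=\Hess W(y_i)(c_1,c_2)$. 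A dimension count ($2^3=8=\dim H^*(T^3)$) together with a filtration or Gröbner argument then shows $HF(L_i,L_i)\cong \Cl(\Hess W(y_i))$ as algebras.

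Non-degeneracy of $\Hess W(y_i)$ comes from Section \ref{section Critical points of the potential}. The critical points there are stage-wise non-degenerate: the stage determinants $\Hess_1,\Hess_2$ in the tables are nonzero. Since the lowest-order part of the full Hessian is block-triangular with these blocks, and Hensel's lemma only changes higher-order terms, the full Hessian determinant over $\Lambda$ is nonzero.

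Next I need formality, which I expect to be the main obstacle. My plan is to invoke intrinsic formality of $\Cl(Q)$ for non-degenerate $Q$ in odd dimension $3$ over the algebraically closed field $\Lambda$. Here $\Cl(Q)\cong \mathrm{Mat}_2(\Lambda)\oplus \mathrm{Mat}_2(\Lambda)$ (or the $\Z/2$-graded simple algebra $\mathrm{Mat}_2\otimes \Cl_1$), which is semisimple. Its Hochschild cohomology in degrees $\ge 1$ therefore vanishes, so every $A_\infty$ structure with this cohomology algebra is formal; this is the argument of \cite{VWXBlowups}. The subtle point is that we work $\Z/2$-graded over $\Lambda$ rather than $\Lambda_0$, so I would first tensor with $\Lambda$ and then check that the obstruction groups $HH^{k,2-k}$ vanish in the $\Z/2$-graded sense.

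Finally, Hochschild homology is Morita invariant, and a graded central simple algebra of this type has $HH_*\cong\Lambda$. This gives $\dim HH_*(CF(L_i,L_i))=1$ for each $i$.
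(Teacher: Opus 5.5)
Your proposal follows the paper's proof. The symmetrized divisor equation for $\m_2$ gives Clifford relations for the non-degenerate Hessian form, intrinsic formality of $\Cl(Q)$ carries these over to $CF(L_i,L_i)$, and the $\Z/2$-graded Hochschild homology of a non-degenerate Clifford algebra is one-dimensional. Where the paper cites \cite{SheridanFanoHypersurface} Corollary 6.4 and \cite{Kassel1986}, you instead argue via vanishing of graded Hochschild cohomology and graded Morita invariance. Your dimension count for $HF\cong\Cl(Q)$ and your non-degeneracy check also spell out steps the paper only asserts.

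You leave one step implicit: the divisor equation is proved in the split model, so you need to pass to the unbroken $CF(L_i,L_i)$ via \cite{VWSplit} Theorem 1.1 and \cite{CharestWoodward} Theorem 8.13, as the paper does. Also keep every Hochschild computation Koszul-signed, since the ungraded $\mathrm{Mat}_2(\Lambda)\oplus\mathrm{Mat}_2(\Lambda)$ has two-dimensional $HH_0$.
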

\begin{proof}   
Let $\left(CF^{\mathrm{split}}(L_i, L_i), \{\m_k\}\right)$ be the split $A_{\infty}$ algebra induced by $\rho_i$.
    Let $x_1,x_2,x_3\in CF_{\mathrm{split}}^1(L)$ be Morse cocycles such that their cohomology classes $e_1,e_2,e_3$  generate $H^1(L;\Lambda_0)$.
    
Let  $\widetilde{\Gamma}$ be a split tropical graph with two incoming boundary leaves and one output root. 
Let $f(\widetilde{\Gamma})$ be the split tropical graph obtained from forgetting the incoming leaves.  
There is a map forgetting the incoming boundary leaves: 
\begin{align*}
 f:  \Mq^{\mathrm{split}}_{\widetilde{\Gamma},\text{red}}\left(L,\mathfrak{p}_{\widetilde{\Gamma}}, \eta_{gen};x_0, x_r,x_s\right)_0 
 \cup 
 \Mq^{\mathrm{split}}_{ \widetilde{\Gamma},\text{red}}\left(L,\mathfrak{p}_{ \widetilde{\Gamma}}, \eta_{gen};x_0, x_s,x_r\right)_0  \\ 
 \longrightarrow \Mq^{\mathrm{split}}_{f(\widetilde{\Gamma}),\text{red}}( L, \mathfrak{p}_{f\left(\widetilde{\Gamma}\right)},\eta_{gen};x_0)_0.    
\end{align*}  
Suppose that $[u:C\to M]\in \Mq^{\mathrm{split}}_{\widetilde{\Gamma},\text{red}}\left(L,\mathfrak{p}_{\widetilde{\Gamma}}, \eta_{gen};x_0\right)_0$ represents a basic disk class $\beta\in H_2(X,L)$; i.e. $\beta=PD[\Phi^{-1}(F)]$ for some facet $F$ of the Gelfand--Cetlin polytope of $M$. 

Let $W^u(x_r), W^u(x_s) $ denote the unstable manifolds of $x_r$ and $x_s$. 
Then  
\[ f^{-1}([u])  
 = \left(\left(u|_{\partial C}\right)^{-1} W^u(x_r) \right)\times  \left(\left(u|_{\partial C}\right)^{-1}  W^u(x_s)\right) .\]  
Thus, the cardinality of $f^{-1}([u])$ is $\pair{[\partial u], e_r}\cdot \pair{[\partial u], e_s}$. 

Since the other factors (the symplectic area, the number of interior leaves, the holonomy, the orientation sign, the number of split edges, and the multiplicity of $\widetilde{\Gamma}$) contributed by $[u]$ to the split composition maps $\m_0$ and $\m_2$ stay the same, we obtain a weak divisor equation for split $A_{\infty}$ algebras similar to that for unbroken $A_{\infty}$ algebras in \cite{VWXBlowups} Lemma 5.12: For a basic disk class $\beta\in H_2(X,L)$ and any $1\leq r,s \leq 3$, 
\[ 
 \m_{2,\beta}(e_r,e_s) + \m_{2,\beta} (e_s,e_r) = \sum_{\beta} 
 \pair{\partial \beta, e_r}\cdot \pair{\partial \beta, e_s} \m_{0,\beta}(1). 
\]
Let $P$ be the leading order potential of $M$. 
For any $1\leq r,s \leq 3$, we have
    \begin{align*}
        &  \m_{2}(e_r,e_s) + \m_{2} (e_s,e_r) \\
      = & \sum_{\beta\in H_2(X,L)}\left( \m_{2,\beta}(e_r,e_s) + \m_{2,\beta} (e_s,e_r)\right)T^{\omega(\beta)}  \\ 
     =& \sum_{\beta \text{ basic }} \pair{\partial \beta, e_r}\cdot \pair{\partial \beta, e_s} \m_{0,\beta}(1)  + \sum_{\beta\in H_2(X,L)}\left( \m_{2,\beta}(e_r,e_s) + \m_{2,\beta} (e_s,e_r)\right)T^{\omega(\beta)} \\ 
        =& \left( \frac{\partial^2 P}{\partial x_r x_s} \eval_{y_i}  + \text{ higher order terms}\right)\cdot e_{L_i}\\
        = & Q(e_r,e_s)\cdot e_{L_i}
    \end{align*}
    where $e_{L_i}$ is the  strict unit of $CF^{\mathrm{split}}(L_i,L_i)$, and $B$ is a non-degenerate symmetric bilinear form that induces a non-degenerate quadratic form $Q$ on $H^1(L,\Lambda_0)$.
    
      Thus, for any $v\in H^1(L;\Lambda_0)$, we have \[ \m_2 \left(v,v\right) =  B(v,v)\cdot e_L. \]
Therefore, the split Floer cohomology $HF^{\mathrm{split}}(L_i,L_i)$ is isomorphic to the $\mathbb{Z}/2\mathbb{Z}$-graded Clifford algebra $A = \operatorname{Cl}\left(HF(L_i,L_i;\Lambda), Q \right)$ of a non-degenerate quadratic form $Q$. 
By \cite{VWSplit} Theorem 1.1, the split Floer cohomology 
$HF^{\mathrm{split}}(L_i,L_i)$ 
is identified with the Floer cohomology of the associated broken 
$A_{\infty}$-algebra. By \cite{CharestWoodward}, Theorem~8.13, 
the broken and unbroken $A_{\infty}$-algebras are homotopy equivalent, 
and hence their Floer cohomologies agree. It follows that the unbroken Floer cohomology $HF(L_i,L_i)$ is isomorphic to $A$. 

By \cite{SheridanFanoHypersurface} Corollary 6.4,  the Clifford algebra $A$ is intrinsically formal. Thus, there is an $A_{\infty}$ quasi-isomorphism from
$CF (L_i,L_i)$  to $HF(L_i,L_i)$. 
Moreover, there is an $A_{\infty}$ homotopy equivalence $F: CF(L_i,L_i) \to HF(L_i,L_i)$. By a homotopy perturbation argument (\cite{Seidelbook} Corollary 1.14), $F$ has a homotopy inverse. Therefore, they have the same Hochschild homology, 
\[ HH_*(CF(L_i,L_i)) \cong HH_*(HF(L_i,L_i)) \cong HH_*(A). 
\]  
By Lemma \ref{lem Hochschild homology of Clifford algebras} (see Proposition 1 in \cite{Kassel1986} Section 6), 
\[ HH_0(A) \cong \Lambda, \qquad HH_k(A)\cong 0 \quad \text{ for }k>0. \]  
\end{proof}

This implies that $\dim\bigoplus
\limits_{i=1}^6 HH_*(CF(L_i,L_i))=6$.

\subsection{\texorpdfstring{Surjectivity of $OC$}{Surjectivity of OC}}

In this section, we consider the unbroken $A_{\infty}$ category and unbroken $A_{\infty}$ algebras as in \cite{VWXBlowups}.

For simplicity, for $1\leq i \leq 6$, we denote $HH_*(CF(L_i,L_i))$ by $V_i$. Moreover, we denote  the maximum point of the chosen Morse function on $L_i$ by $v_i$ and its Hochschild homology class by $[v_i]\in HH_*(CF(L_i,L_i))$.    
It suffices to show that the set $\left\{ OC[v_1],\ldots, OC[v_6]\right\}$ is linearly independent.

\begin{lem} 
If $i,j\in \{1,\ldots, 6\}$ and $i\ne j$, then $HF(L_i,L_j)=0$. 
\end{lem}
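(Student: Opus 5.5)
The six branes $L_i$ all live on the same torus $L=\Phi^{-1}(u_0)$ and differ only in their local systems $\rho_i$. My plan is to run the standard argument that Floer cohomology between distinct critical points of the potential vanishes. There are two independent routes, and I will carry out at least one of them.

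\textbf{First route: compare the values of the potential.} By Lemma \ref{lem weak Maurer-Cartan}, each $L_i$ is weakly unobstructed with $\m_0 = W(y_i)\,e_{L_i}$. In the $A_\infty$ bimodule $CF(L_i,L_j)$, the differential then satisfies
\[ \m_1^{ij}\circ\m_1^{ij} = \bigl(W(y_i)-W(y_j)\bigr)\,\mathrm{id}. \]
So $HF(L_i,L_j)$ is only defined when $W(y_i)=W(y_j)$, and otherwise it is zero by convention. Looking at the tables in Section \ref{section Critical points of the potential}, the leading values $\pm w_1,\pm w_2,\pm w_3$ are pairwise distinct, since $2T^{S_1}$ versus $-2T^{S_1}$ separates the signs and the $T^{S_2}$ coefficients $3,3\zeta,3\zeta^2$ separate the rest. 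Hensel's lemma only adds terms of higher valuation, which cannot undo a difference already visible at leading order. Hence $W(y_i)\neq W(y_j)$ for $i\neq j$.

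\textbf{Second route, valid even if two potential values were to agree: the holonomy argument.} Since $L_i$ and $L_j$ share the same underlying torus, I take the same Morse model. Then $CF(L_i,L_j)$ is $CF(L)$ twisted by the local system $\rho_i^{-1}\rho_j$, which is nontrivial because $y_i\neq y_j$. In the spectral sequence of \cite{fooobook1} Section 6.3.3, exactly as in the proof of Theorem \ref{thm Existence of a non-displaceable Lagrangian}, the $E_2$ page is Morse cohomology of $T^3$ with coefficients in this nontrivial rank-one local system. To make this rigorous I would reduce the holonomy to its leading $\C^\times$ part, noting that the $y_i$ differ already in their leading coefficients $(x_0,y_0,z_0)$. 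Then the twisted cohomology vanishes, because a nontrivial rank-one local system on a torus has zero cohomology (Künneth, using $H^*(S^1;\xi)=0$ for $\xi\neq1$). The $E_2$ page is therefore zero, and convergence (\cite{fooobook1} Theorem 6.3.28) gives $HF(L_i,L_j)=0$.

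\textbf{Main obstacle.} The hard part is justifying the second route inside the split/unbroken model. One must check that the filtration by energy is compatible with a nontrivial twisted differential at energy zero, and that passing from $U_\Lambda$-valued holonomy to its leading coefficient does not change the $E_2$ computation. I would handle this by filtering by $T$-valuation and using that the reduction mod $\Lambda_+$ of $\rho_i^{-1}\rho_j$ is a nontrivial element of $(\C^\times)^3$. The first route is cleaner, since it only needs the weak unobstructedness and the explicit leading-order potential values already computed, so I would present it as the main proof and use the second as a remark.
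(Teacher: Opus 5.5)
Your proposal is correct, and your second route is the paper's argument for the case $W(L_i)=W(L_j)$. That argument uses the FOOO spectral sequence with $E_2\cong H^*(L;\Theta)$, where $\Theta$ is the local system of $\rho_{i,0}^{-1}\rho_{j,0}$, followed by K\"unneth and $H^*(S^1;\Theta_k)=0$ for nontrivial monodromy. Where you differ is the case $W(L_i)\neq W(L_j)$.

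\textbf{Distinct potential values.} The paper does not use the curvature identity $\m_1^2=\pm\bigl(W(y_i)-W(y_j)\bigr)\mathrm{id}$. It invokes \cite{VWXBlowups} Theorem 3.23 to place the $OC$-images of $HF(L_i,L_i)$ and $HF(L_j,L_j)$ in different generalized eigenspaces of $c_1\star$. Self-adjointness of $c_1\star$ for the Poincar\'e pairing then makes these images orthogonal.

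Your route proves the lemma as literally stated. Indeed, $\m_1^2=c\,\mathrm{id}$ with $c\neq 0$ forces $\ker\m_1=0$ on the free module, so no cohomology survives. You also check that the six critical values are pairwise distinct, which the paper never does. That shows the paper's holonomy case is actually vacuous here.

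\textbf{What the paper's version buys.} Its eigenvalue argument gives the statement consumed downstream. The proof of the pairing identity passes to $[\Phi]$ on $HF(L_i,L_j)$, and this only makes sense when $\m_1^2=0$. So when the potential values differ, the vanishing of $\langle OC[v_i],OC[v_j]\rangle_{PD_M}$ must come from the $c_1\star$-eigenspace decomposition, not from ``$HF=0$ by convention.'' If you adopt your route as the main proof, you will still need that eigenspace argument later.

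\textbf{A correction to your distinctness check.} For odd $n$, the $y_0=-1$ rows of the table are not literally $-w_k$. For example, at $(-1,-1,-1)$ the value is $-2T^{S_1}+3T^{S_2}-T^{S_3}$. Your conclusion is unaffected, for two reasons:
\begin{itemize}
\item The $T^{S_1}$ coefficient is $y_0+y_0^{-1}=\pm 2$, which separates the two signs of $y_0$.
\item At a critical point of $W_2$, the three $T^{S_2}$ terms coincide, so the $T^{S_2}$ coefficient is $3z_0/y_0$. This takes three distinct values for each $y_0$.
\end{itemize}
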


\begin{proof}   
First, assume the values of the potential function $W(L_i)= w_i,  W(L_j)=w_j$ are distinct. 
Let $\alpha \in HF(L_i,L_i), \beta \in HF(L_j,L_j)$. By \cite{VWXBlowups} Theorem 3.23, $OC( \alpha )$, $OC( \beta )$ lie in the generalized eigenspaces of the quantum multiplication  $c_1\star: QH^*(X)\to QH^*(X)$ with eigenvalues $w_i, w_j$, respectively. 
Since $c_1\star$ is symmetric with respect to $\pair{-,-}_{PD_M}$, we have
\[ \pair{OC( \alpha ) , OC( \beta )}_{PD_M} = 0. \]

Now assume $W(L_i) = W(L_j)$. Then $\m_1^{\rho_i,\rho_j}\circ \m_1^{\rho_i,\rho_j}=0$,
and $\left(C=CF(L_i,L_j;\Lambda_{0,nov}),\m_1^{\rho_i,\rho_j}\right)$ is a graded finitely generated free module over $\Lambda_{0,nov}$ such that $C$ and each $C^p=$submodule generated by index $k$  is complete with respected to the energy filtration\footnote{This is called a differential graded completed free filtered $\Lambda_{0,nov}$-module generated by energy zero elements, abbreviated d.g.c.f.z., in \cite{fooobook1} Definition 6.3.8.}. 
Let $\rho_{i,0}, \rho_{j,0}$ be the leading term of $\rho_i,\rho_j$. 
By  \cite{fooobook1} Lemma 6.3.24 and \cite{fooobook1} Theorem 6.3.28, there is a spectral sequence with $E_2 \cong H^*(L; \Theta )$ converging to $E_{\infty}\cong HF(L_i,L_j;\Lambda_{0,nov})$, where $\Theta$ is the line bundle corresponding to monodromy representation 
\[ \tau=\rho_{i,0}^{-1}\rho_{j,0}:\Z^3\cong \pi_1(L)\to U_{\Lambda}.
\]  
Let $e_1,e_2,e_3$ be the generators of $\Z^3$. 
Let $\tau_k=\tau(e_k)$ for all $k$. 
Let $\Theta_k$ be the line bundle over $S^1$ with monodromy $\tau_k$.

Since $L\cong T^3$ is a Lagrangian $3$-torus, 
by the K\"{u}nneth formula, 
\[ 
H^*(L;\Theta) \cong \bigotimes_{k=1}^3H^*\left(S^1;   \Theta_k \right).  
\]
By the computation in Section \ref{section Critical points of the potential}, $\rho_{i,0}\ne \rho_{j,0}$ for distinct $i,j$. 
Thus, at least one $\Theta_k$ is nontrivial. 

Consider 
the Morse cohomology $H^*\left(S^1; \Theta_k \right) $ with coefficients in a nontrivial local system $\Theta_k$. Since the cohomology is independent of the morse function, we may use the standard height Morse function on $S^1$ with maximum $p_1$ and minimum $p_0$ and two negative gradient flowlines $\gamma, \gamma'$ from $p_1$ to $p_0$. 
Then \[ 
CM^i\left(S^1; \Theta_k \right) =  \Theta_k\eval_{p_i} \quad \forall i\in \{0,1\}. 
\] 
Let $P_{\gamma}, P_{\gamma'}: \Theta_k\eval_{p_1}\to \Theta_k\eval_{p_0}$ be the parallel transport maps along $\gamma, \gamma'$, respectively. 
Since $\Theta_k$ is flat, $P_{\gamma'} = P_{\gamma}\circ \tau_k(1)$. 
Thus, the differential $d$ is given by  \[ d = \pm (P_{\gamma}-P_{\gamma'}) = \pm P_{\gamma}\circ (\id - \tau_k(1) ) .\] 

Since $P_{\gamma}$ is an isomorphism and $\id - \tau_k(1) \ne 0$, we have \[ H^0(S^1;\Theta_k) \cong  \Lambda/(\id - \tau_k(1))\Lambda \cong 0, \qquad H^1(S^1;\Theta_k) \cong \ker (\id - \tau_k(1)) \cong 0,\]  
which implies $H^*(S^1;\Theta_k) \cong 0$. 

It follows that  $E_2 \cong 0$, and therefore $HF(L_i,L_j)=0$.
\end{proof}

\begin{prop}[Coincidence of pairings]
Let $i,j\in \{1,\ldots, 6\}$ and $\mathcal{J}$ be a basis of $HF(L_i,L_j)$. 
Then 
\[ \pair{OC([v_i]), OC([v_j])}_{PD_M} = \sum\limits_{e_k\in \mathcal{J} } \pair{ \m_2\left([v_j],\m_2\left([v_i],[e_k]\right)\right), [e_k]}. \]  
\end{prop}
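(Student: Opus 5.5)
This is a Cardy-type relation. The plan is to identify both sides with counts of a single one-parameter family of holomorphic annuli, and then degenerate that family in its two opposite directions.

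\emph{Left-hand side.} The pairing $\pair{OC([v_i]),OC([v_j])}_{PD_M}$ is first written as a count of rigid configurations made of two treed disks, one with boundary on $L_i$ carrying the input $v_i$ and one with boundary on $L_j$ carrying the input $v_j$. Their interior outputs are joined by a Morse flow line in $M$, or equivalently by a Poincaré-dual pairing of the two cycles. Gluing at the interior node gives a moduli space of treed annuli $\mathcal{M}^{\mathrm{ann}}(L_i,L_j;v_i,v_j)$. Each such annulus has one boundary circle on $L_i$ carrying $v_i$ and the other on $L_j$ carrying $v_j$, and its modulus $\ell\in(0,\infty)$ is free. As $\ell\to\infty$ the annulus breaks along a long neck into the two $OC$ disks joined through a cycle in $M$. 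I take the perturbation data of \cite{VWXBlowups} to be compatible with this breaking and with the Künneth decomposition of the diagonal in $M\times M$. The count at $\ell=\infty$ is then exactly $\pair{OC([v_i]),OC([v_j])}_{PD_M}$.

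\emph{Right-hand side.} As $\ell\to 0$ the annulus pinches along an arc that joins the two boundary components. The limit is a strip with boundaries on $L_i$ and $L_j$, glued to itself at a Floer generator of $CF(L_i,L_j)$. Inserting a basis $\mathcal{J}$ of $CF(L_i,L_j)$ and its dual basis under the $\m_2$-pairing, this end counts the trace over $\mathcal{J}$ of the map $e\mapsto \m_2([v_j],\m_2([v_i],e))$, paired against $e$. That is the right-hand side, first at chain level. The one-dimensional parametrized moduli space with $\ell\in(0,\infty)$ then gives a cobordism between the two ends. Its remaining boundary strata come from disk bubbling on $L_i$ or $L_j$ and from Morse breaking on the boundary circles. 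These contribute $\m_0$ terms and $\m_1$-exact terms. The $\m_0$ terms cancel because $L_i$ and $L_j$ are weakly unobstructed with curvature $w_i\,e_{L_i}$ and $w_j\,e_{L_j}$ (Lemma \ref{lem weak Maurer-Cartan}), and the strict unit is forgotten on the annulus. In the case $w_i\ne w_j$ the complex $CF(L_i,L_j)$ is not a cochain complex, and I would handle this case separately. There both sides vanish: the left side by the generalized-eigenspace argument for $c_1\star$ used in the preceding lemma, and the right side because the trace is taken over an acyclic, or undefined, piece. The $\m_1$-exact terms vanish in the trace because $v_i$ and $v_j$ are cycles. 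Passing to cohomology therefore replaces $\mathcal{J}$ by a basis of $HF(L_i,L_j)$.

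\emph{Main obstacle.} The hardest step is the transversality and compactness of the annulus moduli space for the given perturbation system, and in particular coherence of the perturbations at the $\ell=0$ end. At that end the strip's two outputs must be paired using the same perturbations that define $\m_2$. I would follow the approach of \cite{AbouzaidGeneration} and \cite{AFOOO}: use domain-dependent perturbations on annuli that restrict, near $\ell=0$, to glued strip data and, near $\ell=\infty$, to glued $OC$ data. I would also check that the orientation signs agree with those in the definition of $\m_2$, so that the right-hand side appears without extra signs.
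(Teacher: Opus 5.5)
Your strategy matches the paper's. Both use a one-parameter family of treed annuli, with outer boundary on $L_i$ carrying $v_i$ and inner boundary on $L_j$ carrying $v_j$. The long-neck end is the Morse pairing of the two $OC$ disks, and the thin end is a trace over generators of $CF(L_i,L_j)$. The intermediate strata die because $v_i,v_j$ are closed and the curvature is a multiple of the strict unit; the paper packages this as ``$v_i,v_j$ are Hochschild cycles.'' A Hopf-trace argument then passes to $HF(L_i,L_j)$. Your separate treatment of $w_i\neq w_j$ agrees with how the paper handles that case in the preceding lemma.

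\textbf{A gap in the setup.} Annuli with one marked point on each boundary circle form a \emph{two}-dimensional family: modulus and relative angle. So ``$\ell\in(0,\infty)$ free'' does not give a one-dimensional cobordism. You must fix the relative angle at a nonzero value; the paper uses an offset of $\pi$.

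This normalization also determines the thin end. With a fixed nonzero offset, both boundary arcs between the two marked points become long necks as the annulus thins. The limit is therefore two three-pointed disks in a ring, joined by two broken boundary edges: one disk carries $v_i$ and has its boundary split between $L_i$ and $L_j$, and the other carries $v_j$. Breaking at generators $x,y$ gives $\sum_{x,y}\pair{\m_2(v_i,x),y}\,\pair{\m_2(v_j,y),x}$. This is the trace of $x\mapsto \m_2(v_j,\m_2(v_i,x))$, and it uses only the tautological pairing of a generator with its dual, not an ``$\m_2$-pairing.''

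Your picture of a single strip carrying both $v_i$ and $v_j$, glued to itself, is what aligned marked points produce. That end counts the trace of a four-point operation. Identifying it with the composition of two $\m_2$'s would need an extra homotopy argument, which you did not give.

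\textbf{Signs.} The thin end naturally produces a supertrace, $\sum_x(-1)^{|x|}\pair{\m_2(v_j,\m_2(v_i,x)),x}$, exactly as in the paper's count of stratum (a). Orientation conventions will not remove this Koszul sign, so do not aim to make the right-hand side sign-free.
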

\begin{proof}
      Consider the compactification of the moduli space $\overline{\mathcal{M}}^{\text{ann}}_{1,1,0}$ of treed annuli, with 
one inner boundary leaf and one outer boundary leaf with an angle offset of $\pi$. 
There is a homeomorphism
\[ \rho: \overline{\mathcal{M}}^{\text{ann}}_{1,1,0}\to [-\infty,+\infty] \]
defined by the following. For a treed annulus $[C]\in \overline{\mathcal{M}}^{\text{ann}}_{1,1,0}$ with inner radius $r_1$ and outer radius $r_2$, 
\[ \rho([C]) = \frac{ r_1r_2^{-1}}{1+ r_1r_2^{-1}}. \]

Let $\overline{\mathcal{M}}_{1,1}(L_i,L_j;v_i,v_j)$ be the compactification of the moduli space of stable maps with domain in $\overline{\mathcal{M}}^{\text{ann}}_{1,1,0}$ such that the following holds. 
\begin{enumerate}[label=(\roman*)]
    \item The surface maps satisfy the pseudoholomorphic equation with respect to the almost complex structure in the perturbation data. 
    \item The treed maps satisfy the negative gradient flow equation with respect to the Morse-Smale pair in the perturbation data. 
    \item The outer and inner boundary circles are mapped to $L_i, L_j$, respectively. 
    \item The inner and outer boundary leaves are constrained to $v_i, v_j$, respectively. 
\end{enumerate}

The boundary of 
$\overline{\mathcal{M}}_{1,1}(L_i,L_j;v_i,v_j)_1$---the subset of $\overline{\mathcal{M}}_{1,1}(L_i,L_j;v_i,v_j)$ 
consisting of elements of expected dimension 
1---decomposes into the following types of strata. (Also see Figure \ref{fig:moduli of annuli}.)
\begin{enumerate}[label=(\alph*)]
    \item $\rho =-\infty$ corresponds to rigid treed pseudoholomorphic maps given by two disks connected by two broken boundary edges, which separate each disk boundary into two components. The count of such strata corresponds to $\sum \limits_{x\in \mathcal{I}(L_i,L_j)} (-1)^{|x|}\pair{\m_2(v_j,\m_2(v_i,x)),x} $, where 
    \begin{align*}
     & \mathcal{I}(L_i,L_j)\\
    = 
    & \begin{cases}
      \{\text{critical points of the Morse function } F_{L_i,L_j}\}
   \quad  &  \text{ if } i \ne j\\
     \{\text{critical points of the Morse function } F_{L_i,L_i}\}\cup \{1^{\greyt}_{L_i,c}, 1^{\whitet}_{L_i,c}\} 
   \quad  & \text{ if }i = j
    \end{cases}   
    \end{align*}
 
    \item $\rho \in (-\infty,\infty)$  corresponds to rigid treed pseudoholomorphic annuli with one boundary breaking. The contributions of these configurations are 
    \[ 
    \mathcal{T}(\delta_{CC} (v_i), v_j) \pm \mathcal{T}( (v_i), \delta_{CC}(v_j) ), 
    \]
    where $ \mathcal{T}(\underline{a}_-, \underline{a}_+)$ counts the number of rigid treed annuli of essential type with inner and outer boundary mappings constrained to $\underline{a}_-$ and $\underline{a}_+$, respectively, and $\delta_{CC}$ is the Hochschild differential.  
    Since $v_i, v_j$ are Hochschild cycles, the contribution of such configurations is zero.   
    \item $\rho = \infty$ corresponds to rigid treed pseudoholomorphic maps given by two disks with boundaries on $L_i$ and $L_j$ and boundary markings constrained to  $v_i$ and $v_j$, respectively, and which are joined by an interior edge broken at a critical point of the Morse function on 
$M$. The count of such strata corresponds to the intersection pairing $\pair{OC_-(v_i),OC_+(v_j)}_{CM^{\bullet}}$. 
\end{enumerate}

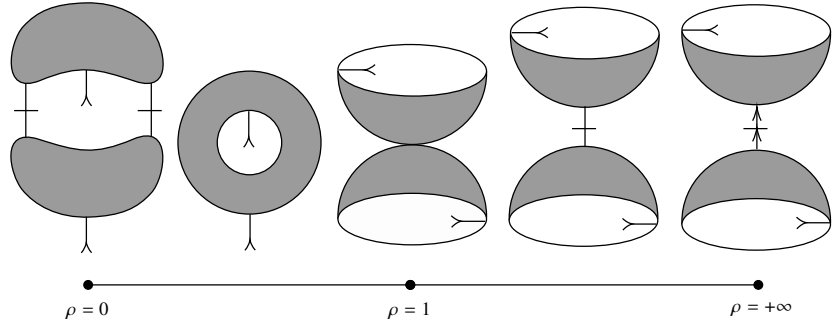
\begin{figure}
    \centering

\tikzset{every picture/.style={line width=0.75pt}} %set default line width to 0.75pt        
\scalebox{0.65}{ 

\begin{tikzpicture}[x=0.75pt,y=0.75pt,yscale=-1,xscale=1]
%uncomment if require: \path (0,300); %set diagram left start at 0, and has height of 300

%Shape: Polygon Curved [id:ds6872311197481195] 
\draw  [fill={rgb, 255:red, 155; green, 155; blue, 155 }  ,fill opacity=1 ] (29.59,29.44) .. controls (40.69,17.41) and (53.12,11.74) .. (77.98,11.03) .. controls (102.84,10.33) and (120.6,20.95) .. (129.93,32.98) .. controls (139.25,45.02) and (146.8,81.83) .. (116.61,71.92) .. controls (86.42,62.01) and (73.98,58.47) .. (42.91,71.21) .. controls (11.83,83.96) and (18.49,41.48) .. (29.59,29.44) -- cycle ;
%Shape: Regular Polygon [id:dp17439618086842923] 
\draw  [fill={rgb, 255:red, 155; green, 155; blue, 155 }  ,fill opacity=1 ] (129.12,158.41) .. controls (118.22,170.56) and (105.88,176.36) .. (81.03,177.33) .. controls (56.18,178.31) and (38.25,167.88) .. (28.73,155.94) .. controls (19.2,144) and (11.05,107.26) .. (41.4,116.85) .. controls (71.75,126.45) and (84.24,129.85) .. (115.11,116.78) .. controls (145.97,103.7) and (140.02,146.26) .. (129.12,158.41) -- cycle ;
%Straight Lines [id:da9493089831300486] 
\draw    (31.62,73.3) -- (31.62,116.02) ;
%Straight Lines [id:da46442595651510643] 
\draw    (129,73.3) -- (129,115) ;
%Straight Lines [id:da462766839226589] 
\draw    (22.55,94.66) -- (41.2,94.66) ;
%Straight Lines [id:da7764633186535034] 
\draw    (118.45,94.66) -- (137.1,94.66) ;
%Shape: Circle [id:dp7662698477040144] 
\draw  [fill={rgb, 255:red, 155; green, 155; blue, 155 }  ,fill opacity=1 ] (149.68,119.5) .. controls (149.68,88.85) and (174.53,64) .. (205.18,64) .. controls (235.83,64) and (260.68,88.85) .. (260.68,119.5) .. controls (260.68,150.15) and (235.83,175) .. (205.18,175) .. controls (174.53,175) and (149.68,150.15) .. (149.68,119.5) -- cycle ;
%Shape: Circle [id:dp6375820710291517] 
\draw  [fill={rgb, 255:red, 255; green, 255; blue, 255 }  ,fill opacity=1 ] (180.18,119.5) .. controls (180.18,105.69) and (191.37,94.5) .. (205.18,94.5) .. controls (218.99,94.5) and (230.18,105.69) .. (230.18,119.5) .. controls (230.18,133.31) and (218.99,144.5) .. (205.18,144.5) .. controls (191.37,144.5) and (180.18,133.31) .. (180.18,119.5) -- cycle ;
%Shape: Arc [id:dp24111385938451668] 
\draw  [draw opacity=0][fill={rgb, 255:red, 155; green, 155; blue, 155 }  ,fill opacity=1 ] (273.86,178.9) .. controls (273.84,178.46) and (273.83,178.01) .. (273.82,177.56) .. controls (273.33,147.01) and (298.74,121.83) .. (330.58,121.32) .. controls (362.41,120.81) and (388.62,145.16) .. (389.11,175.71) .. controls (389.12,176.41) and (389.12,177.1) .. (389.1,177.8) -- (331.47,176.63) -- cycle ; \draw   (273.86,178.9) .. controls (273.84,178.46) and (273.83,178.01) .. (273.82,177.56) .. controls (273.33,147.01) and (298.74,121.83) .. (330.58,121.32) .. controls (362.41,120.81) and (388.62,145.16) .. (389.11,175.71) .. controls (389.12,176.41) and (389.12,177.1) .. (389.1,177.8) ;  
%Shape: Ellipse [id:dp5425166586002697] 
\draw  [fill={rgb, 255:red, 253; green, 253; blue, 253 }  ,fill opacity=1 ] (273.84,178.9) .. controls (273.84,167.86) and (299.64,158.9) .. (331.47,158.9) .. controls (363.29,158.9) and (389.09,167.86) .. (389.09,178.9) .. controls (389.09,189.95) and (363.29,198.9) .. (331.47,198.9) .. controls (299.64,198.9) and (273.84,189.95) .. (273.84,178.9) -- cycle ;
%Shape: Arc [id:dp9384287406295507] 
\draw  [draw opacity=0][fill={rgb, 255:red, 155; green, 155; blue, 155 }  ,fill opacity=1 ] (389.09,63.97) .. controls (389.11,64.42) and (389.12,64.87) .. (389.12,65.32) .. controls (389.28,95.87) and (363.61,120.78) .. (331.77,120.95) .. controls (299.93,121.12) and (273.98,96.5) .. (273.81,65.94) .. controls (273.81,65.24) and (273.82,64.55) .. (273.84,63.85) -- (331.47,65.63) -- cycle ; \draw   (389.09,63.97) .. controls (389.11,64.42) and (389.12,64.87) .. (389.12,65.32) .. controls (389.28,95.87) and (363.61,120.78) .. (331.77,120.95) .. controls (299.93,121.12) and (273.98,96.5) .. (273.81,65.94) .. controls (273.81,65.24) and (273.82,64.55) .. (273.84,63.85) ;  
%Shape: Ellipse [id:dp679556279489937] 
\draw  [fill={rgb, 255:red, 255; green, 255; blue, 255 }  ,fill opacity=1 ] (389.09,63.97) .. controls (388.97,75.02) and (363.08,83.7) .. (331.25,83.36) .. controls (299.43,83.02) and (273.73,73.79) .. (273.85,62.74) .. controls (273.96,51.7) and (299.86,43.02) .. (331.68,43.36) .. controls (363.5,43.7) and (389.2,52.93) .. (389.09,63.97) -- cycle ;
%Shape: Arc [id:dp3914947500628225] 
\draw  [draw opacity=0][fill={rgb, 255:red, 155; green, 155; blue, 155 }  ,fill opacity=1 ] (406.75,180.03) .. controls (406.73,179.59) and (406.72,179.14) .. (406.71,178.69) .. controls (406.22,148.14) and (431.63,122.96) .. (463.47,122.45) .. controls (495.3,121.94) and (521.51,146.29) .. (522,176.84) .. controls (522.01,177.54) and (522.01,178.23) .. (521.99,178.93) -- (464.35,177.76) -- cycle ; \draw   (406.75,180.03) .. controls (406.73,179.59) and (406.72,179.14) .. (406.71,178.69) .. controls (406.22,148.14) and (431.63,122.96) .. (463.47,122.45) .. controls (495.3,121.94) and (521.51,146.29) .. (522,176.84) .. controls (522.01,177.54) and (522.01,178.23) .. (521.99,178.93) ;  
%Shape: Ellipse [id:dp2404861806568036] 
\draw  [fill={rgb, 255:red, 255; green, 255; blue, 255 }  ,fill opacity=1 ] (406.73,180.03) .. controls (406.73,168.99) and (432.53,160.03) .. (464.35,160.03) .. controls (496.18,160.03) and (521.98,168.99) .. (521.98,180.03) .. controls (521.98,191.08) and (496.18,200.03) .. (464.35,200.03) .. controls (432.53,200.03) and (406.73,191.08) .. (406.73,180.03) -- cycle ;
%Shape: Arc [id:dp872765082687372] 
\draw  [draw opacity=0][fill={rgb, 255:red, 155; green, 155; blue, 155 }  ,fill opacity=1 ] (522.98,35.1) .. controls (523,35.55) and (523,36) .. (523.01,36.45) .. controls (523.17,67) and (497.49,91.91) .. (465.65,92.08) .. controls (433.81,92.25) and (407.87,67.63) .. (407.7,37.07) .. controls (407.7,36.37) and (407.71,35.67) .. (407.73,34.98) -- (465.35,36.76) -- cycle ; \draw   (522.98,35.1) .. controls (523,35.55) and (523,36) .. (523.01,36.45) .. controls (523.17,67) and (497.49,91.91) .. (465.65,92.08) .. controls (433.81,92.25) and (407.87,67.63) .. (407.7,37.07) .. controls (407.7,36.37) and (407.71,35.67) .. (407.73,34.98) ;  
%Shape: Ellipse [id:dp5811490797599709] 
\draw  [fill={rgb, 255:red, 255; green, 255; blue, 255 }  ,fill opacity=1 ] (522.97,35.1) .. controls (522.86,46.15) and (496.96,54.83) .. (465.14,54.49) .. controls (433.32,54.15) and (407.62,44.92) .. (407.74,33.87) .. controls (407.85,22.83) and (433.75,14.15) .. (465.57,14.49) .. controls (497.39,14.83) and (523.09,24.06) .. (522.97,35.1) -- cycle ;
%Shape: Arc [id:dp02810337071790414] 
\draw  [draw opacity=0][fill={rgb, 255:red, 155; green, 155; blue, 155 }  ,fill opacity=1 ] (540.75,183.03) .. controls (540.73,182.59) and (540.72,182.14) .. (540.71,181.69) .. controls (540.22,151.14) and (565.63,125.96) .. (597.47,125.45) .. controls (629.3,124.94) and (655.51,149.29) .. (656,179.84) .. controls (656.01,180.54) and (656.01,181.23) .. (655.99,181.93) -- (598.35,180.76) -- cycle ; \draw   (540.75,183.03) .. controls (540.73,182.59) and (540.72,182.14) .. (540.71,181.69) .. controls (540.22,151.14) and (565.63,125.96) .. (597.47,125.45) .. controls (629.3,124.94) and (655.51,149.29) .. (656,179.84) .. controls (656.01,180.54) and (656.01,181.23) .. (655.99,181.93) ;  
%Shape: Ellipse [id:dp39273602304257593] 
\draw  [fill={rgb, 255:red, 255; green, 255; blue, 255 }  ,fill opacity=1 ] (540.73,183.03) .. controls (540.73,171.99) and (566.53,163.03) .. (598.35,163.03) .. controls (630.18,163.03) and (655.98,171.99) .. (655.98,183.03) .. controls (655.98,194.08) and (630.18,203.03) .. (598.35,203.03) .. controls (566.53,203.03) and (540.73,194.08) .. (540.73,183.03) -- cycle ;
%Shape: Arc [id:dp5236077196434306] 
\draw  [draw opacity=0][fill={rgb, 255:red, 155; green, 155; blue, 155 }  ,fill opacity=1 ] (655.98,33.1) .. controls (656,33.55) and (656,34) .. (656.01,34.45) .. controls (656.17,65) and (630.49,89.91) .. (598.65,90.08) .. controls (566.81,90.25) and (540.87,65.63) .. (540.7,35.07) .. controls (540.7,34.37) and (540.71,33.67) .. (540.73,32.98) -- (598.35,34.76) -- cycle ; \draw   (655.98,33.1) .. controls (656,33.55) and (656,34) .. (656.01,34.45) .. controls (656.17,65) and (630.49,89.91) .. (598.65,90.08) .. controls (566.81,90.25) and (540.87,65.63) .. (540.7,35.07) .. controls (540.7,34.37) and (540.71,33.67) .. (540.73,32.98) ;  
%Shape: Ellipse [id:dp6811516994333258] 
\draw  [fill={rgb, 255:red, 255; green, 255; blue, 255 }  ,fill opacity=1 ] (655.97,33.1) .. controls (655.86,44.15) and (629.96,52.83) .. (598.14,52.49) .. controls (566.32,52.15) and (540.62,42.92) .. (540.74,31.87) .. controls (540.85,20.83) and (566.75,12.15) .. (598.57,12.49) .. controls (630.39,12.83) and (656.09,22.06) .. (655.97,33.1) -- cycle ;
%Straight Lines [id:da8308809857281325] 
\draw    (465.52,91.3) -- (465.52,123) ;
%Straight Lines [id:da028334239728510346] 
\draw    (455.45,108.66) -- (474.1,108.66) ;
%Straight Lines [id:da3149762387170597] 
\draw    (598.88,93.61) -- (598.88,107.61) ;
\draw [shift={(598.88,91.61)}, rotate = 90] [color={rgb, 255:red, 0; green, 0; blue, 0 }  ][line width=0.75]    (10.93,-3.29) .. controls (6.95,-1.4) and (3.31,-0.3) .. (0,0) .. controls (3.31,0.3) and (6.95,1.4) .. (10.93,3.29)   ;
%Straight Lines [id:da12364135230985929] 
\draw    (588.45,108.66) -- (607.1,108.66) ;
%Straight Lines [id:da8843858473681214] 
\draw    (78.63,83.16) -- (78.63,63.16) ;
\draw   (74.67,91.14) .. controls (76.85,88.42) and (78.16,85.71) .. (78.57,83) .. controls (79.04,85.69) and (80.41,88.36) .. (82.67,91.01) ;

%Straight Lines [id:da880259475810161] 
\draw    (78.63,197.16) -- (78.63,177.16) ;
\draw   (74.67,205.14) .. controls (76.85,202.42) and (78.16,199.71) .. (78.57,197) .. controls (79.04,199.69) and (80.41,202.36) .. (82.67,205.01) ;

%Straight Lines [id:da517965908935423] 
\draw    (204.63,115.16) -- (204.63,95.16) ;
\draw   (200.67,123.14) .. controls (202.85,120.42) and (204.16,117.71) .. (204.57,115) .. controls (205.04,117.69) and (206.41,120.36) .. (208.67,123.01) ;

%Straight Lines [id:da8518809490497552] 
\draw    (205.63,195.16) -- (205.63,175.16) ;
\draw   (201.67,203.14) .. controls (203.85,200.42) and (205.16,197.71) .. (205.57,195) .. controls (206.04,197.69) and (207.41,200.36) .. (209.67,203.01) ;

%Straight Lines [id:da4097451367266711] 
\draw    (369.28,180.63) -- (388.03,180.63) ;
\draw   (361.72,176.27) .. controls (364.32,178.7) and (366.88,180.14) .. (369.42,180.56) .. controls (366.91,181.14) and (364.44,182.73) .. (362,185.33) ;

%Straight Lines [id:da8342874005633825] 
\draw    (294.79,63.02) -- (274.79,63.02) ;
\draw   (302.79,66.95) .. controls (300.06,64.78) and (297.33,63.49) .. (294.63,63.09) .. controls (297.31,62.6) and (299.98,61.22) .. (302.63,58.96) ;

%Straight Lines [id:da30822519779385127] 
\draw    (428.68,34.15) -- (408.68,34.15) ;
\draw   (436.68,38.08) .. controls (433.94,35.91) and (431.23,34.62) .. (428.52,34.22) .. controls (431.21,33.73) and (433.87,32.35) .. (436.52,30.09) ;

%Straight Lines [id:da8953571472842385] 
\draw    (560.68,32.15) -- (540.68,32.15) ;
\draw   (568.68,36.08) .. controls (565.94,33.91) and (563.23,32.62) .. (560.52,32.22) .. controls (563.21,31.73) and (565.87,30.35) .. (568.52,28.09) ;

%Straight Lines [id:da5827578679501694] 
\draw    (502.17,182.76) -- (520.92,182.76) ;
\draw   (494.61,178.4) .. controls (497.21,180.83) and (499.77,182.26) .. (502.31,182.69) .. controls (499.8,183.27) and (497.33,184.86) .. (494.89,187.46) ;

%Straight Lines [id:da5040330987776671] 
\draw    (637.17,181.76) -- (655.92,181.76) ;
\draw   (629.61,177.4) .. controls (632.21,179.83) and (634.77,181.26) .. (637.31,181.69) .. controls (634.8,182.27) and (632.33,183.86) .. (629.89,186.46) ;

%Straight Lines [id:da38328710402150135] 
\draw    (80,230) -- (330,230) ;
\draw [shift={(330,230)}, rotate = 0] [color={rgb, 255:red, 0; green, 0; blue, 0 }  ][fill={rgb, 255:red, 0; green, 0; blue, 0 }  ][line width=0.75]      (0, 0) circle [x radius= 3.35, y radius= 3.35]   ;
\draw [shift={(80,230)}, rotate = 0] [color={rgb, 255:red, 0; green, 0; blue, 0 }  ][fill={rgb, 255:red, 0; green, 0; blue, 0 }  ][line width=0.75]      (0, 0) circle [x radius= 3.35, y radius= 3.35]   ;
%Straight Lines [id:da8739776041957618] 
\draw    (330,230) -- (600,230) ;
\draw [shift={(600,230)}, rotate = 0] [color={rgb, 255:red, 0; green, 0; blue, 0 }  ][fill={rgb, 255:red, 0; green, 0; blue, 0 }  ][line width=0.75]      (0, 0) circle [x radius= 3.35, y radius= 3.35]   ;
\draw [shift={(330,230)}, rotate = 0] [color={rgb, 255:red, 0; green, 0; blue, 0 }  ][fill={rgb, 255:red, 0; green, 0; blue, 0 }  ][line width=0.75]      (0, 0) circle [x radius= 3.35, y radius= 3.35]   ;
%Straight Lines [id:da5740651494382661] 
\draw    (598.88,109.61) -- (598.88,124.61) ;
\draw [shift={(598.88,107.61)}, rotate = 90] [color={rgb, 255:red, 0; green, 0; blue, 0 }  ][line width=0.75]    (10.93,-3.29) .. controls (6.95,-1.4) and (3.31,-0.3) .. (0,0) .. controls (3.31,0.3) and (6.95,1.4) .. (10.93,3.29)   ;

% Text Node
\draw (61,242.4) node [anchor=north west][inner sep=0.75pt]    {$\rho =0$};
% Text Node
\draw (311,242.4) node [anchor=north west][inner sep=0.75pt]    {$\rho =1$};
% Text Node
\draw (577,242.4) node [anchor=north west][inner sep=0.75pt]    {$\rho =+\infty $};

\end{tikzpicture}
}
\caption{Moduli space of treed annuli with two inputs}
    \label{fig:moduli of annuli}
\end{figure}
Therefore, we have 
\[ \pair{OC_-(v_i),OC_+(v_j)}_{CM^{\bullet}} = \sum\limits_{x\in \mathcal{I}(L_i,L_j) } \pm \pair{\m_2(v_j,\m_2(v_i,x)),x}. \]
The chain map \[ \Phi: CF(L_i,L_j) \to CF(L_i,L_j), \quad \Phi(x) := \m_2(v_j,\m_2(v_i,x))
\]  
induces a map 
\[ [\Phi]: HF(L_i,L_j) \to HF(L_i,L_j), \quad [\Phi]([x]) :=  \m_2\left([v_j],\m_2\left([v_i],[x] \right)\right), 
\]  
where $[v_i]\in HF(L_i,L_i), [v_j] \in HF(L_j,L_j)$. 

Since $HF(L_i,L_j)=0$, we have $[\Phi]=0$. 
 
Note that on cohomology level $OC_-$ and $OC_+$ are identical, both of which we denote by $OC$. 
Therefore, 
\begin{align*}
\pair{OC([v_i]),OC([v_j])}_{PD_M} 
& = \sum\limits_{e_k\in \mathcal{J} } \pm \pair{ \m_2\left([v_j],\m_2\left([v_i],[e_k]\right)\right), [e_k]}  \\
&=  \sum\limits_{e_k\in \mathcal{J} } \pm   \pair{[\Phi]([e_k])  , [e_k]} = 0.    
\end{align*} 
   
\end{proof} 
\begin{lem} \label{point class as an intersection} Let $1\leq i\leq 6$. There is a basis consisting of $\beta_1,\beta_2,\beta_3$ of $H^1(L_i;\Lambda)$ such that 
\[[v_i] = \beta_1\cup \beta_2\cup \beta_3. \]
\begin{enumerate} [label=(\arabic*)]
\item $[v_i] = \beta_1\cup \beta_2\cup \beta_3$.
\item $\beta_k\ast \beta_k = \lambda_k\cdot \bold{1}$ for some $\lambda_k\ne 0$ for all $k\in \{1,2,3\}$; \item $\beta_k\ast \beta_l + \beta_l\ast \beta_k = 0$ for all $k,l \in \{1,2,3\}$ with $k \ne l$.  
\end{enumerate} 
\end{lem}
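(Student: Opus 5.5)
The plan is to diagonalize the quadratic form $Q$ from the proof of Proposition \ref{Hochschild of Clifford is rank 1}, and then to identify the Floer product of a diagonal basis with the Clifford product. Recall that $Q(e_r,e_s)$ is, to leading order, the Hessian $\frac{\partial^2 P}{\partial x_r\partial x_s}$ at the critical point $y_i$, corrected by higher order terms. Stage-wise non-degeneracy shows this Hessian is non-degenerate: the determinants $\Hess_1$ and $\Hess_2$ in the tables of Section \ref{section Critical points of the potential} are nonzero. So $Q$ is a non-degenerate symmetric bilinear form on $H^1(L;\Lambda)$.

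\textbf{Step 1: diagonalize.} Since $\Lambda$ is a field of characteristic zero, Gram--Schmidt yields a $Q$-orthogonal basis $\beta_1,\beta_2,\beta_3$ of $H^1(L_i;\Lambda)$ with $Q(\beta_k,\beta_k)=\lambda_k\neq 0$. Because $\Lambda$ is algebraically closed, we may rescale so that $\lambda_k=1$, though any nonzero value suffices.

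\textbf{Step 2: properties (2) and (3).} These follow from the polarized identity already established:
\[ \m_2(\beta_k,\beta_l)+\m_2(\beta_l,\beta_k)=Q(\beta_k,\beta_l)\, e_{L_i}. \]
Here $\ast$ denotes the Floer product $[\m_2]$ on $HF(L_i,L_i)$. For $k=l$ this gives $2\beta_k\ast\beta_k=Q(\beta_k,\beta_k)\mathbf 1$; absorbing the factor $2$ gives $\lambda_k=\tfrac12 Q(\beta_k,\beta_k)\neq0$. For $k\neq l$, orthogonality gives $\beta_k\ast\beta_l+\beta_l\ast\beta_k=0$. One has to make sure the Morse representatives are cocycles for $\m_1$; this holds by the collapse of the spectral sequence in Theorem \ref{thm Existence of a non-displaceable Lagrangian}, which gives $HF\cong H^*(L)$.

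\textbf{Step 3: property (1).} The classical cup product $\beta_1\cup\beta_2\cup\beta_3$ is a generator of $H^3(T^3;\Lambda)$, since the $\beta_k$ form a basis of $H^1$ and the cohomology of the torus is an exterior algebra. We will take the maximum point $v_i$ of the chosen Morse function to represent the top class. After multiplying $\beta_1$ by a nonzero scalar, which preserves (2) and (3) with a rescaled $\lambda_1$, we get $[v_i]=\beta_1\cup\beta_2\cup\beta_3$ in $H^*(L_i;\Lambda)\cong HF(L_i,L_i)$.

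\textbf{Main obstacle.} The delicate point is the identification in (1). The statement uses the classical cup product, but $HF(L_i,L_i)$ is identified with $H^*(L)$ only as a vector space, through the $E_2$ collapse. We must check that the class of $v_i$ is the image of the top cohomology class and is nonzero in $HF$. Equivalently, we need the leading order (energy zero) part of $\m_2$ to be the Morse cup product, which is standard for Morse-model $A_\infty$ algebras. If one needs $[v_i]=\beta_1\ast\beta_2\ast\beta_3$ rather than the cup product, an energy filtration argument should suffice. Modulo $\Lambda_+$ the Floer product equals the cup product, so the Floer triple product is a unit multiple of $[v_i]$ plus lower degree terms. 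The lower degree terms lie in the odd part spanned by the $\beta_k$, and by the Clifford relations they can be absorbed by rescaling the $\beta_k$.
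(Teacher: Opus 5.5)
Your proof is correct and is essentially the paper's: take a $Q$-orthogonal basis so that the polarized identity from Proposition~\ref{Hochschild of Clifford is rank 1} yields (2) and (3), then rescale $\beta_1$ to match the top class. The only difference is that the paper fixes the scalar by first proving $[v_i]=\alpha_1\cup\alpha_2\cup\alpha_3$ for an integral basis (via subtori $Z_k$ meeting transversally at $v_i$) and dividing by $\det A$, whereas you use $\dim_\Lambda H^3(L_i;\Lambda)=1$ directly.

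One caution on your optional closing remark, which is not needed for the lemma: rescaling the $\beta_k$ multiplies every component of $\beta_1\ast\beta_2\ast\beta_3$ by the same scalar, so it cannot remove a positive-valuation $H^1$-component. The paper instead carries that discrepancy as the error term $\eta$ in the next proposition, and such odd terms lie in $[A,A]_{gr}$, so they vanish in $HH_0$ anyway.
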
 
\begin{proof} Let $\psi: L_i\to T^3$ be a diffeomorphism. Choose an orthogonal basis of $H^1(L_i;\Z)$ consisting of primitive classes $\alpha_1,\alpha_2, \alpha_3\in H^1(L_i;\Z)\cong \Hom(\pi_1(L_i), \Z))\cong \Hom(\Z^3, \Z) $. Fix $k\in \{1,2,3\}$. Then $\alpha_k$ is given by a map of the form \[ \Z^3 \to \Z, \qquad (x_1,x_2,x_3)\mapsto c_{k,1}x_1+c_{k,2}x_2+c_{k,3}x_3, \] where $(c_{k,1},c_{k,2},c_{k,3})\in \Z^3\setminus \{(0,0,0)\}$ with $\gcd(c_{k,1},c_{k,2},c_{k,3})=1$ and $\det (c_{k,j}) = 1$.

Define $\rho_k: \R^n/\Z^n\to \R/\Z$ by \[ \rho_k[(x_1,x_2,x_3)] := [c_{k,1}x_1+c_{k,2}x_2+c_{k,3}x_3].\] Then $d\rho_k = c_{k,1}dx_1+c_{k,2}dx_2+c_{k,3}dx_3$ is nowhere zero. Thus, $\tau_k:=\rho_k\circ \psi: L\to S^1$ is a submersion, and $Z_k=\tau_k^{-1}(\tau_k(v_i))$ is a smooth codimension-1 submanifold of $L$. (Since $\tau_k$ is a proper submersion, $Z_k\hookrightarrow L_i\to S^1$ is a fibration. By the long exact sequence of the homotopy groups for the fibration, $\pi_1(Z_k)\cong \Z^{2}$, and $\pi_m(Z_k)=0$ for $m\geq 2$. This shows that $Z_k$ is a closed connected manifold which is $K(\Z^2,1)$. Hence, $Z_k$ is diffeomorphic to $T^2$. ) Thus, $\alpha_k=PD[Z_k]$. Moreover, since the $\alpha_k$'s form a $\Z$ basis, $(\tau_1,\tau_2,\tau_3)$ is a diffeomorphism, and $Z_1 \cap Z_2\cap Z_3= \{ v_i\}$. Therefore, $[v_i] = \alpha_1\cup \alpha_2\cup \alpha_3$.  

Choose an orthogonal basis $\beta_1',\beta_2',\beta_3'$ for $H^1(L;\Lambda)$ such that 
\[ \beta_{k}'\ast \beta_l' + \beta_{l}'\ast \beta_k'=0 \qquad \forall k\ne l,  \]
\[ \beta_k'\ast \beta_k' = Q(\beta_k')\cdot \bold{1}\ne 0 \qquad \forall k. \]

Then $\beta_l = \sum\limits_{j=1}^3 a_{lj}\alpha_j$ for each $l$. Let $A= (a_{lj})_{l,j}$. 
Then 
\[ 
\beta_1' \cup \beta_2' \cup \beta_3' = \det A \cdot \alpha_1\cup \alpha_2\cup \alpha_3.
\] 
Then  
\[ \beta_1 = (\det A)^{-1} \beta_1', \qquad \beta_2 = \beta_2', \qquad \beta_3 = \beta_3'
\] 
is a desired basis. 
\end{proof}
 
\begin{prop}     $\pair{OC([v_i]), OC([v_j])}_{PD_M}\begin{cases}
     = 0 & \text{ if }  i\ne j\\
     \ne 0 & \text{ if }  i= j
    \end{cases}$. 
\end{prop}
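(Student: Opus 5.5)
The plan is to use the Coincidence of pairings proposition for both cases, with $\mathcal{J}$ a basis of $HF(L_i,L_j)$. When $i\ne j$, the preceding lemma gives $HF(L_i,L_j)=0$. The sum on the right-hand side of the coincidence formula is then empty, or equivalently $[\Phi]=0$, so $\pair{OC([v_i]),OC([v_j])}_{PD_M}=0$. As a cross-check, when $W(L_i)\ne W(L_j)$ the eigenspace argument for $c_1\star$ gives the same vanishing directly.

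For $i=j$, Proposition \ref{Hochschild of Clifford is rank 1} identifies $HF(L_i,L_i)$ with the Clifford algebra $A=\operatorname{Cl}(H^1(L;\Lambda),Q)$ with $Q$ non-degenerate, which has dimension $8$. I will take the basis $\mathcal{J}=\{\beta_I=\beta_{k_1}\ast\cdots\ast\beta_{k_m} : I=\{k_1<\cdots<k_m\}\subset\{1,2,3\}\}$ built from the basis $\beta_1,\beta_2,\beta_3$ of Lemma \ref{point class as an intersection}. The coincidence formula reduces the pairing to the supertrace-type quantity $\sum_I \pm\pair{[v_i]\ast[v_i]\ast\beta_I,\beta_I}$, where $\pair{-,-}$ is the pairing on $HF(L_i,L_i)$ that appears in the $\rho=-\infty$ stratum. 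Concretely, it is the coefficient extracted by the Poincaré/Morse duality pairing between $CF(L_i,L_i)$ and its dual basis.

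The next step is to identify $[v_i]$ inside $A$. By Lemma \ref{point class as an intersection}, $[v_i]=\beta_1\cup\beta_2\cup\beta_3$. Since the $\beta_k$ anticommute in $A$, the triple product $\beta_1\ast\beta_2\ast\beta_3$ agrees with the cup product up to lower-order (energy-positive) corrections, which are filtered by degree and can be absorbed. So $[v_i]=\beta_{123}$, and relations (2) and (3) then give
\[ \beta_{123}\ast\beta_{123}=-\lambda_1\lambda_2\lambda_3\cdot\mathbf 1\neq 0. \]
Each summand is therefore $\pm\lambda_1^2\lambda_2^2\lambda_3^2\pair{\beta_I,\beta_I}$. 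The Morse duality pairing is nondegenerate and, in the chosen basis, pairs $\beta_I$ with the complementary element. I expect the pairing appearing in the $\rho=-\infty$ boundary to be the ``dual basis'' pairing, i.e.\ $\sum_k\pair{\Phi(e_k),e_k^\vee}=\operatorname{str}(\Phi)$. In that case the pairing equals $\operatorname{str}$ of left multiplication by the central-for-even-parts scalar $\beta_{123}^2$ composed appropriately.

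The main obstacle is this last sign and trace bookkeeping. The supertrace of multiplication by a scalar vanishes, since even and odd parts of $A$ both have dimension $4$. So the nonvanishing must come from the sign conventions $(-1)^{|x|}$ together with the fact that $\m_2([v_i],-)$ is left multiplication by the odd element $\beta_{123}$, with the two multiplications on opposite sides. I would compute $x\mapsto \beta_{123}\ast(\beta_{123}\ast x)$, or more carefully the version with $v_j$ acting from the other module side. One finds that $\beta_{123}$ is central but odd, so the operator $x\mapsto \beta_{123}x\beta_{123}$ acts by $(-1)^{|x|}\beta_{123}^2$. Its supertrace is then an ordinary trace, equal to $8\cdot(-\lambda_1\lambda_2\lambda_3)\ne0$. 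If the conventions do not line up this way, my fallback is the Cardy-type argument from \cite{VWXBlowups} and \cite{SheridanFanoHypersurface}. That argument identifies $\pair{OC(v),OC(v)}$ with the Mukai-type pairing on $HH_*(A)\cong\Lambda$, which is nondegenerate for a Clifford algebra of a nondegenerate form, and $[v_i]$ generates $HH_0(A)$ because $\beta_{123}$ spans the top-degree part.
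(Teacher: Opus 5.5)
Your skeleton is the paper's: the coincidence-of-pairings proposition, $HF(L_i,L_j)=0$ for $i\ne j$, and the Clifford relations of Lemma~\ref{point class as an intersection} for $i=j$. You differ from the paper in how you evaluate the $\rho=-\infty$ count when $i=j$.

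The paper computes $\m_2([v_i],[v_i])=\pm\gamma\cdot\mathbf{1}+\eta'$. It then treats $\sum_k\pair{\m_2([v_i],\m_2([v_i],[e_k])),[e_k]^{\vee}}$ as an unsigned trace of a nonzero scalar. You point out, correctly, that stratum (a) carries the sign $(-1)^{|x|}$. With both copies of $[v_i]$ acting on the left, one would then get the supertrace of a scalar on the $4|4$-dimensional algebra $A=\Cl(Q)$, which is $0$. Nonvanishing therefore requires one action on each side. Your reading is the one consistent with the paper's own stratum (a).

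Your fallback is the cleanest form of the argument: the count is the Mukai pairing on $HH_0(A)=\Lambda\cdot[\beta_{123}]$, which is nondegenerate for the separable, finite-dimensional algebra $A$. It also makes ``absorbing the corrections'' exact. The difference $[v_i]-\beta_1\ast\beta_2\ast\beta_3$ is a combination of lower-degree monomials $\beta_I$ with $|I|\le 2$. These lie in $[A,A]_{gr}$ by the computation in Lemma~\ref{lem Hochschild homology of Clifford algebras}, and the pairing vanishes on $[A,A]_{gr}$. By contrast, the paper's bookkeeping $\val(\gamma)\ge 0$, $\val(\eta')>0$ does not by itself exclude cancellation, since $\gamma$ already has positive valuation.

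Two statements in your write-up need fixing.

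First, $x\mapsto\beta_{123}x\beta_{123}$ does not act by $(-1)^{|x|}\beta_{123}^2$. Because $\dim V=3$ is odd, $\beta_{123}$ commutes with every $\beta_k$ in the ungraded sense; for example $\beta_1\ast\beta_{123}=\lambda_1\,\beta_2\ast\beta_3=\beta_{123}\ast\beta_1$. So $\beta_{123}x\beta_{123}=\beta_{123}^2x$ with no sign. The $(-1)^{|x|}$ that cancels the supertrace sign must come from the Koszul sign of the right action, i.e.\ from the pairing $\pair{a,b}=\operatorname{str}\bigl(x\mapsto(-1)^{|x||b|}\,a\,x\,b\bigr)$. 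State this explicitly. With that formula, $\pair{\beta_{123},\beta_{123}}=8\beta_{123}^2=-8\lambda_1\lambda_2\lambda_3\neq 0$. As a consistency check, $\pair{\beta_k,\beta_l}=0$ for all $k,l$, as it must be since $\beta_k\in[A,A]_{gr}$.

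Second, drop the line ``each summand is $\pm\lambda_1^2\lambda_2^2\lambda_3^2\pair{\beta_I,\beta_I}$''. The diagonal coefficient is $\beta_{123}^2=-\lambda_1\lambda_2\lambda_3$, not its square. Under a complementary-element pairing every diagonal term would vanish. Only the dual-basis (trace) reading you adopt afterwards is correct.
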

\begin{proof}
    If $i\ne j$, then $HF(L_i,L_j)=0$, which implies that 
\[ \pair{OC_-([v_i]),OC_+([v_j])}_{PD_M} =0 \quad \text{ if } i \ne j. \] 
If $i = j$, then 
\[ \pair{OC([v_i]),OC([v_i])}_{PD_M} = \sum\limits_{e_k\in \mathcal{J} } \pair{ \m_2\left([v_i],\m_2\left([v_i],[e_k]\right)\right), [e_k]^{\vee}}.\] 
By Lemma \ref{point class as an intersection},  
\[ [v_i] = \beta_1\cup \beta_2\cup \beta_3 = \beta_1\ast \beta_2 \ast \beta_3 + \eta ,\]
where $\val(\eta)>0$, and  $x\ast y:=\m_2(x,y)$.  

Fix $1\leq i \leq 6$.  We have
\begin{align*}
 \m_2([v_i],[v_i]) 
 & = \left(  \beta_1\ast \beta_2 \ast \beta_3 + \eta\right) \ast \left( \beta_1\ast \beta_2 \ast \beta_3 + \eta\right) = (\beta_1\ast \beta_2 \ast \beta_3)\ast (\beta_1\ast \beta_2 \ast \beta_3) + \eta' \\
 & = (-1)^{\frac{3\cdot (3-1)}{2}} (\beta_1\ast \beta_1)\ast(\beta_2\ast \beta_2)\ast(\beta_3\ast \beta_3)+ \eta'  \\
 & =  \pm  \gamma \cdot \bold{1}  + \eta', 
\end{align*}
where $ \val(\gamma)\geq 0, \val(\eta')>0$,
since $\beta_1,\beta_2,\beta_3$ form an orthogonal basis of $H^1(L_i;\Z)$. In particular, $\beta_r\ast \beta_s = -\beta_s\ast \beta_r$ for $r\ne s$ and, for all $r$, $\beta_r\ast \beta_r = \alpha_r \cdot \bold{1}\ne 0$ for some $\alpha_r$ with positive valuation. 
Therefore, for all $i$, we have 
\[ \pair{OC([v_i]),OC([v_i])}_{PD_M} \ne 0.  \]
\end{proof}
 \begin{cor}[Surjectivity of $OC$]
 \label{Surjectivity of OC}
The map $OC: \bigoplus\limits_{i=1}^6 HH_*(CF(L_i,L_i))\to  QH^*(M, \Lambda) $ in \autoref{length 0 OC} is surjective.  
 \end{cor}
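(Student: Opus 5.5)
The plan is to deduce surjectivity from linear independence of the six classes $OC([v_1]),\ldots,OC([v_6])$ in $QH^*(M,\Lambda)$, together with a dimension count. Since $\dim_\Lambda QH^*(M,\Lambda)=6$ by the computation of $H^*(M;\Z)$ via \cite{GoertschesGKM}, any six linearly independent vectors span. It therefore suffices to show that the images of the generators $[v_i]\in V_i=HH_*(CF(L_i,L_i))$ are linearly independent. By Proposition \ref{Hochschild of Clifford is rank 1}, each $V_i$ is one-dimensional. I would also check that $[v_i]$ is a nonzero generator of $V_i$; this follows once $OC([v_i])\neq 0$, which the argument below gives automatically.

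Suppose $\sum_{i=1}^6 c_i\, OC([v_i]) = 0$ with $c_i\in\Lambda$. Pair this relation with $OC([v_j])$ using the Poincaré pairing $\pair{-,-}_{PD_M}$ on $QH^*(M,\Lambda)$. By the preceding proposition, $\pair{OC([v_i]),OC([v_j])}_{PD_M}$ vanishes for $i\neq j$ and is nonzero for $i=j$. This gives $c_j\pair{OC([v_j]),OC([v_j])}_{PD_M}=0$, so $c_j=0$ for every $j$.

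The Gram matrix of the six classes is therefore diagonal with nonzero entries, hence invertible, so the classes are linearly independent. As a byproduct, each $OC([v_i])\ne 0$, so $[v_i]$ spans $V_i$. The image of $OC$ thus contains six independent vectors of the six-dimensional space $QH^*(M,\Lambda)$, and $OC$ is surjective. Combined with $\dim \bigoplus_i V_i = 6$, this also yields Theorem \ref{OC is an isomorphism}.

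The only substantive input is the nondegeneracy of the diagonal pairings, which comes from the Clifford-algebra structure in Lemma \ref{point class as an intersection}. I take that as given from the previous proposition. The remaining step needing care is that $OC_-$ and $OC_+$ agree on cohomology, so that the Cardy-type pairing computed there is genuinely the Poincaré pairing of $OC$-images. The paper records this fact, so I would simply invoke it.
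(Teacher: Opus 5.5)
Your argument follows the paper's approach: the classes $OC([v_i])$ are pairwise orthogonal under $\pair{-,-}_{PD_M}$ with nonzero diagonal pairings, and $\dim QH^*(M,\Lambda)=6$ then gives surjectivity. Your linear-independence step is also the sound version of the paper's step. The paper pairs the relation $\sum_i a_i\,OC([v_i])=0$ with itself and asserts $\sum_i a_i^2\pair{OC([v_i]),OC([v_i])}_{PD_M}\neq 0$, but this does not follow: $\Lambda$ is algebraically closed, so nonzero $a_1,a_2$ can make two such terms cancel. Pairing the relation with each $OC([v_j])$ separately, as you do, gives $a_j=0$ directly.
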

 \begin{proof}
     We want to show that the $OC([v_1]), \ldots, OC([v_6])$ form a basis of $QH^*(M,\Lambda)$. 
Suppose  
\[ \sum_{i=1}^6 a_i OC([v_i])=0, \]
where $a_k\ne 0$ for some $k$. 
Then 
\begin{align*}
0 
=  \pair{0,0 }_{PD_M} & =\pair{\sum_{i=1}^6 a_i OC([v_i]),\sum_{i=1}^6 a_i OC([v_i]) }_{PD_M}\\
& = \sum_{i=1}^6  a_i^2 \pair{ OC([v_i]),OC([v_i]) }_{PD_M} \ne 0.     
\end{align*} 
This is a contradiction. 
Moreover, since $\dim QH^*(M,\Lambda)=6$, $OC_-([v_1]), \ldots, OC_-([v_6])$ form a basis of $QH^*(M,\Lambda)$. 
Therefore, map \autoref{length 0 OC} is surjective. 
 \end{proof}

\appendix

\newpage
\section{Multiplicity-free manifolds}
\label{section Multiplicity-free manifolds}

Multiplicity-free Hamiltonian $G$-manifolds is a natural generalization of toric manifolds to non-abelian group actions. Toric manifolds themselves are multiplicity-free manifolds for Hamiltonian torus actions.  
Moreover, there is an analog of Delzant's classification theorem due to Knop \cite{KnopClassification}, Theorem 11.2, which implies that compact multiplicity-free manifolds are classified by their Kirwan polytope and the weight lattice.

Here, we review some equivalent definitions of multiplicity-free manifolds. 

\begin{defprop}[\cite{WoodwardClassification} Proposition A.1, \cite{lanethesis} Section 4.3]
 Let $G$ be a compact connected Lie group, which acts on a compact symplectic manifold $(M,\omega)$ in a Hamiltonian fashion with moment map $\Phi$. 
Then $(M,\omega)$  is a \textit{multiplicity-free $G$-manifold} if any of the following holds. 
\begin{enumerate}[label=(\arabic*)]
    \item The Poisson subalgebra $C^{\infty}(M)^G$ of invariant smooth functions is abelian. 
    \item $G$ acts transitively on $\Phi^{-1}(\mathcal{O}_{\xi})$ for every $\xi \in \Phi(M)$.
    \item For any $\xi \in \Phi(M)$, the dimension of the reduced space $\Phi^{-1}(\xi)/G_{\xi}$ is zero. 
\end{enumerate}
Morever, the following holds. 
\begin{enumerate}[label=(\arabic*)]
\setcounter{enumi}{3}
    \item If $G$ acts locally freely on a dense subset of $M$, then $M$ is multiplicity-free if and only if 
\[ \dim M = \dim G + \rank G. \]
\item If $M$ admits a collective completely integrable system, namely a completely integrable consisting of maps of the form $f\circ \Phi$, where $f\in C^{\infty} (\g^*)$, then $M$ is multiplicity-free. 
\end{enumerate}    
\end{defprop}

\section{\texorpdfstring{$\mathbb{Z}/2\mathbb{Z}$-graded Clifford algebras}{Z/2Z-graded Clifford algebras}}
In this section, we recall some facts about Clifford algebras. In particular, we recall a computation of the Hochschild homology of a non-degenerate $\mathbb{Z}/2\mathbb{Z}$-graded Clifford algebra.  

\begin{defn}[Clifford algebra]
    Let $V$ be a free module over a ring $R$. The \textit{Clifford algebra} $\Cl(q)$ associated to a quadratic form $q: V\to R$ is defined to be $T(V) /\mathcal{I}$, 
    where $T(V)$ is the tensor algebra of $V$, and  $\mathcal{I}$ is the two-sided ideal generated by 
    $\{ v\otimes v-q(v)\cdot 1\mid v\in V\}$. 
    And we endow $\Cl(q)$ with a $\mathbb{Z}/2\mathbb{Z}$-grading by specifying that the elements of $V$ have odd degree, the unit $1$ of $\Cl(q)$ has even degree, and that the degree is multiplicative. 
\end{defn} 
For the rest of the section, 
we denote by $k$ a field of characteristic $\ne 2$ and $\Cl(q)$ a $\mathbb{Z}/2\mathbb{Z}$-graded Clifford algebra induced by a \textit{non-degenerate} quadratic form $q: V\to k$. 

\begin{defn}[Separable]
Let $A$ be a $\mathbb{Z}/2\mathbb{Z}$-graded algebra over $k$ and $A^{\opposite}$ be its opposite algebra. Let $A^e = A\otimes_{k}A^{\opposite}$ be its enveloping algebra. Then $A$ is said to be $\mathbb{Z}/2\mathbb{Z}$-graded \textit{separable} if $A$ is a $\mathbb{Z}/2\mathbb{Z}$-graded projective $A^e$-module. 
\end{defn}

\begin{lem}
\label{Clifford algebras are separable superalgebras}
    $\Cl(q)$ is a separable $\mathbb{Z}/2\mathbb{Z}$-graded algebra.  
\end{lem}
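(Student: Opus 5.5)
The plan is to exhibit an explicit separability idempotent for $\Cl(q)$ and then verify the standard characterization. A $\mathbb{Z}/2\mathbb{Z}$-graded algebra $A$ over $k$ is graded separable as soon as the multiplication map $\mu: A^e \to A$, $a\otimes b \mapsto ab$, admits an $A^e$-linear graded section. Equivalently, there should be an even element $e=\sum_i a_i\otimes b_i\in A^e$ with $\mu(e)=1$ and $(x\otimes 1)e = e(1\otimes x)$ for all homogeneous $x$, up to the Koszul signs built into the $A^{\opposite}$ structure. Then $A\cong A^e e$ is a direct summand of the free module $A^e$, and hence projective.

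The first step is diagonalization. Since $\operatorname{char} k\neq 2$, choose an orthogonal basis $v_1,\ldots,v_m$ of $V$ with $v_j^2=q(v_j)=:c_j\neq 0$, using non-degeneracy, and with $v_jv_l=-v_lv_j$ for $j\neq l$. For a subset $I=\{i_1<\cdots<i_s\}\subset\{1,\ldots,m\}$ write $v_I=v_{i_1}\cdots v_{i_s}$. These monomials form a $k$-basis of $\Cl(q)$ of dimension $2^m$. Each $v_I$ is invertible, with $v_I^{-1}=\pm c_I^{-1}v_I$ where $c_I=\prod_{i\in I}c_i$.

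The candidate idempotent is
\[ e = \frac{1}{2^m}\sum_{I} (\pm) v_I^{-1}\otimes v_I , \]
with signs fixed so that each summand multiplies to $1$ under $\mu$. This gives $\mu(e)=1$ immediately. The substantive check is the centrality relation $(v_j\otimes 1)e=e(1\otimes v_j)$ for each generator. Since the $v_j$ generate, it suffices to verify it for them. Left multiplication by $v_j$ permutes the basis, $I\mapsto I\triangle\{j\}$, up to a scalar. One then rewrites $v_jv_I^{-1}$ as a multiple of $v_{I\triangle\{j\}}^{-1}v_j$ and reindexes the sum.

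I expect the sign bookkeeping to be the main obstacle. The graded tensor product $A\otimes A^{\opposite}$ carries Koszul signs $(-1)^{|a||b|}$, both in its multiplication and in the action on $A$. Whether the averaged element is exactly invariant depends on choosing the signs $(\pm)$ correctly. In the graded setting one may need to twist by the parity $(-1)^{|I|}$, so that the reindexing produces matching signs. I would first settle the case $m=1$ by hand, where $A=k[v]/(v^2-c)$ and one solves directly for $e=\tfrac12(1\otimes 1 + \epsilon c^{-1} v\otimes v)$ with $\epsilon=\pm1$. I would then pass to general $m$ using the graded tensor product decomposition $\Cl(q)\cong \Cl(c_1)\hat\otimes\cdots\hat\otimes \Cl(c_m)$. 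The product of separability idempotents is again a separability idempotent, because graded separability is preserved under graded tensor products. This avoids the global sign computation.

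As a fallback, extend scalars to $\bar k$. Over $\bar k$, $\Cl(q)$ is a graded central simple algebra: a matrix algebra when $m$ is even, or $M_r\otimes \Cl_1$ when $m$ is odd. Such algebras are graded separable. Separability then descends along the field extension, since projectivity of $A$ over $A^e$ is equivalent to the surjectivity of $\Hom_{A^e}(A^e,A)\to\Hom_{A^e}(A^e,A)$-type lifting, and that condition is checked after faithfully flat base change.
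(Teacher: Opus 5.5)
Your argument is correct, but it takes a different route from the paper. The paper constructs nothing. It cites Helmstetter--Micali for the fact that $\Cl(q)$ is a graded Azumaya algebra, i.e.\ $A\hat\otimes A^{\opposite}\to\End(A)$ is bijective. It then cites their theorem that graded Azumaya algebras are graded separable.

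You instead exhibit an even separability idempotent. This makes the lemma self-contained and shows exactly where the hypotheses enter: $2^m$ must be invertible (this is where $\operatorname{char}k\neq 2$ is used) and each $c_j$ must be nonzero (this is non-degeneracy). The citation route is shorter and holds in Helmstetter--Micali's greater generality. It also isolates the Azumaya property as the conceptual reason.

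Some corrections to your write-up:
\begin{itemize}
\item \textbf{The sign worry does not arise.} For an even $e=\sum_i a_i\otimes b_i^{\opposite}$ one has $(1\otimes x^{\opposite})e=\sum_i(-1)^{|x|(|a_i|+|b_i|)}a_i\otimes(b_ix)^{\opposite}=\sum_i a_i\otimes(b_ix)^{\opposite}$. So the graded centrality condition $(x\otimes 1)e=(1\otimes x^{\opposite})e$ is literally the ungraded one. Consequently the twisted-group-algebra average $e=2^{-m}\sum_I v_I^{-1}\otimes v_I^{\opposite}$ works with all signs $+$; for $m=1$ this gives $\epsilon=+1$.
\item \textbf{Your reindexing step is misstated.} The correct version: if $v_Iv_j^{-1}=\lambda v_J$ with $J=I\triangle\{j\}$, then $v_jv_I^{-1}=\lambda^{-1}v_J^{-1}$ and $v_I=\lambda v_Jv_j$. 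Hence $v_jv_I^{-1}\otimes v_I=v_J^{-1}\otimes v_Jv_j$. As you wrote it, $v_jv_I^{-1}$ would be a multiple of $v_J^{-1}v_j$, but that element is a multiple of $v_I$ and has the wrong parity.
\item \textbf{Extra steps.} With the correction above, the tensor-factorization step is unnecessary. It is still valid, since $(A\hat\otimes B)^e\cong A^e\hat\otimes B^e$. The base-change fallback is also unnecessary, and its descent sentence is garbled. The correct criterion is surjectivity of the map $\Hom_{A^e}(A,A^e)\to\Hom_{A^e}(A,A)$ induced by $\mu$. This is compatible with field extension because $A$ is finitely presented over $A^e$.
\end{itemize}
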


\begin{proof} 
By \cite{HelmstetterMicaliClifford} Corollary (3.7.5), $\Cl(q)$ is a $\mathbb{Z}/2\mathbb{Z}$-graded Azumaya algebra; i.e., $A=\Cl(q)$ is a finitely generated and faithful projective $k$-module, and the
canonical morphism 
\[ A \hat{\otimes} A^{\opposite} \to \End(A), \quad a\otimes b^{\opposite}\mapsto \left(x\mapsto (-1)^{|b||x|}axb\right) \] is bijective. 
This implies that $\Cl(q)$ is $\mathbb{Z}/2\mathbb{Z}$-graded separable by \cite{HelmstetterMicaliClifford} Theorem (6.7.1). 
\end{proof}
\begin{lem}\label{Lemma higher hochschild homologies vanish}
    Let $A$ be a $\mathbb{Z}/2\mathbb{Z}$-graded algebra over a field $k$. If $A$ is separable, then $HH_i(A)=0$ for $i\geq 1$. 
\end{lem}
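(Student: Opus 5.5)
The plan is to compute Hochschild homology with the bar resolution and to use projectivity of $A$ over $A^e$ to kill the higher terms. By definition, $HH_i(A)=\operatorname{Tor}_i^{A^e}(A,A)$. In the $\mathbb{Z}/2\mathbb{Z}$-graded setting the Koszul signs enter the $A^e$-module structure, but they do not affect the homological algebra. To make this identification concrete, I will use the graded bar complex $B_\bullet(A)$, with $B_m(A)=A\otimes A^{\otimes m}\otimes A$ and the usual bar differential carrying Koszul signs. Each $B_m(A)$ is a free graded $A^e$-module, and the extra degeneracy $s(a_0\otimes\cdots\otimes a_{m+1})=1\otimes a_0\otimes\cdots\otimes a_{m+1}$ is a $k$-linear contracting homotopy, so $B_\bullet(A)\to A$ is a graded free resolution. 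Applying $A\otimes_{A^e}-$ recovers the Hochschild complex $C_\bullet(A)$, hence $HH_\bullet(A)\cong \operatorname{Tor}^{A^e}_\bullet(A,A)$.

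Next I use separability. Since $A$ is a graded projective $A^e$-module, the augmentation $\varepsilon:B_0(A)=A\otimes A\to A$, given by multiplication, splits $A^e$-linearly. Concretely, the surjection $A^e\to A$ admits a section $a\mapsto a\cdot e$, where $e$ is an even separability idempotent with $\mu(e)=1$ and $ae=ea$. Such an $e$ exists because a graded projective module is a direct summand of a graded free module, so one lifts the identity along $\varepsilon$. With this splitting, $A$ itself is a projective resolution of $A$ of length $0$. Any two projective resolutions are chain homotopy equivalent over $A^e$, so $\operatorname{Tor}_i^{A^e}(A,A)$ computed from $B_\bullet(A)$ agrees with the value computed from the length-$0$ resolution. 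That value vanishes for $i\ge 1$ and equals $A\otimes_{A^e}A=A/[A,A]_{gr}$ for $i=0$.

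Alternatively, and more directly, I can write down an explicit contracting homotopy on $C_{\ge1}(A)$ using $e=\sum x_j\otimes y_j$, modelled on the classical proof for ungraded separable algebras. The main obstacle is sign bookkeeping: I need to verify that the comparison maps, or the explicit homotopy built from $e$, commute with the Koszul-signed differentials. I would handle this by insisting that $e$ be even, so that multiplying by it introduces no signs, and by checking the identity $bh+hb=\mathrm{id}$ in degrees $\ge1$ only up to these parity conventions. The standard Tor comparison argument sidesteps most of the explicit sign checks, so I would present that argument and mention the explicit homotopy only as a remark.
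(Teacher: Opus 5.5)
Your proposal is correct and follows the same route as the paper: identify $HH_\bullet(A)$ with $\operatorname{Tor}^{A^e}_\bullet(A,A)$, and use that $A$ is a graded projective $A^e$-module, so it serves as its own length-zero projective resolution. The paper cites Kassel (2.3) for the identification of Hochschild homology with Tor, whereas you derive it explicitly from the graded bar resolution and the comparison theorem.
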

\begin{proof}
 Since $A$ is graded separable, $A$ is a graded projective $A^e$-module and $\operatorname{Tor}_i^{A^e}(A,A)=0$ for $i>0$. Therefore,   $HH_i(A) = \operatorname{Tor}_i^{A^e}(A,A) =0$ for $i>0$ by \cite{Kassel1986} (2.3). 
\end{proof}
   
The following Lemma (Lemma \ref{lem Hochschild homology of Clifford algebras}) corresponds to Proposition 1 in \cite{Kassel1986} Section 6 and computes the Hochschild homology of such Clifford algebras for certain fields. 
\begin{lem}[Hochschild homology of non-degenerate $\mathbb{Z}/2\mathbb{Z}$-graded Clifford algebras]
\label{lem Hochschild homology of Clifford algebras} 
Let $A=\Cl(q)$. Then 
\[ HH_0(A) \cong k, \qquad HH_i(A)\cong 0 \quad \text{ for }i>0. \]    
\end{lem}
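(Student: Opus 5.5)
The plan has two parts. First, Lemma \ref{Lemma higher hochschild homologies vanish} together with Lemma \ref{Clifford algebras are separable superalgebras} already gives $HH_i(A)=0$ for $i>0$, since $A=\Cl(q)$ is $\mathbb{Z}/2\mathbb{Z}$-graded separable. So it remains to compute $HH_0(A)=A/[A,A]$. Here $[A,A]$ is spanned by the graded commutators $ab-(-1)^{|a||b|}ba$. We will show this quotient is one-dimensional, spanned by the image of $1$.

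\textbf{Reduction to a diagonal basis.} Since $\operatorname{char}k\neq 2$, we diagonalize $q$: choose an orthogonal basis $e_1,\dots,e_m$ of $V$ with $e_i^2=q(e_i)=:a_i\neq 0$ (non-degeneracy) and $e_ie_j=-e_je_i$ for $i\neq j$. Then the monomials $e_S=e_{i_1}\cdots e_{i_k}$, for $S=\{i_1<\dots<i_k\}$, form a $k$-basis of $A$.

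\textbf{Killing every monomial with $S\neq\emptyset$.} For $|S|$ even and nonempty, pick $i\in S$ and write $e_S=\pm e_i e_{S\setminus i}$. Here $e_i$ and $e_{S\setminus i}$ are both odd, so the graded commutator is $e_ie_{S\setminus i}+e_{S\setminus i}e_i$. Moving $e_i$ past the $|S|-1$ other factors (an odd number) gives $e_{S\setminus i}e_i=-e_ie_{S\setminus i}$. Hence this commutator vanishes, so we need a different element. Instead, take $j\in S$ and use $[e_j, e_j e_S]$: $e_j$ is odd and $e_je_S$ is odd, so the graded commutator is $e_j e_j e_S + e_j e_S e_j = a_j e_S + (-1)^{|S|-1}a_j e_S$. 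For $|S|$ even this equals $a_j e_S - a_j e_S = 0$. This also fails, so for even $|S|$ we use the ordinary commutator with an odd generator placed differently. Since $e_S=\tfrac{1}{a_j}(e_j)(e_je_S)$, consider $[e_j,\,e_je_S]$ as computed and the alternative $[e_k e_j,\ \cdot\,]$ pairs. The cleanest route is a direct test: for $S\neq\emptyset$ with $|S|$ odd, choose $j\in S$. Then $e_S=\pm a_j^{-1}e_j\cdot(e_je_S)$, where $e_je_S$ is even, and $e_j$ commutes past it with sign $(-1)^{|S|-1}=+1$, so $e_je_S\,e_j=e_j\,e_je_S$ in the ungraded sense. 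The graded commutator of the odd element $e_j$ with the even element $e_je_S$ is the ordinary one and gives zero, so this also needs care. Because the sign bookkeeping is delicate, I will instead use the element $e_k$ with $k\notin S$ when $S\neq\{1,\dots,m\}$: $e_k e_S$ versus $e_S e_k$ differ by the sign $(-1)^{|S|}$, and the graded commutator $[e_k, e_S]$, equal to $(1-(-1)^{|S|}(-1)^{|S|})$ times the product, is also zero. The actual mechanism is therefore $[e_k, e_ke_S]=a_ke_S\pm a_k e_S$ with $k\notin S$: the sign works out to $2a_ke_S$ exactly when it does not for $k\in S$.

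\textbf{Main obstacle.} The hard step is this sign bookkeeping. Getting it right is what produces the parity dependence, namely whether the top class $e_{\{1,\dots,m\}}$ survives. For our application ($m=3$ odd) one then checks that the top monomial is killed too, so $HH_0(A)\cong k\cdot 1$. Finally, $1\notin[A,A]$, because the supertrace (the coefficient of $e_\emptyset$) vanishes on graded commutators, as a direct check on basis monomials shows. This matches \cite{Kassel1986} Section 6, Proposition 1, which I would cite to confirm the parity conventions.
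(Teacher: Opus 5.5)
Your treatment of $HH_i(A)$ for $i>0$ is exactly the paper's: graded separability makes $A$ projective over $A^e$, so the higher Tor groups vanish. The computation of $HH_0(A)$, however, identifies the wrong surviving class, and both steps meant to justify it are false. With the graded commutator you fixed, $[e_k,e_k]_{gr}=e_ke_k-(-1)^{1\cdot 1}e_ke_k=2a_k\cdot 1$, so $1\in[A,A]_{gr}$ as soon as $V\neq 0$. Your own ``actual mechanism'' $[e_k,e_ke_S]_{gr}=2a_ke_S$ for $k\notin S$ already gives this at $S=\emptyset$, so the proposal contradicts itself. Hence the coefficient of $e_\emptyset$ does \emph{not} vanish on graded commutators. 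It is an ordinary trace (it kills $ab-ba$), not a supertrace, and the claim $1\notin[A,A]_{gr}$ fails.

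In the other direction, the top monomial $e_{[m]}=e_1\cdots e_m$ is never a sum of graded commutators, for either parity of $m$. So the assertion that it is killed when $m=3$ is also false. The coefficient of $e_{[m]}$ in $e_Ie_J$ is nonzero only when $J=I^c$. For disjoint $I,J$ one has $e_Je_I=(-1)^{|I||J|}e_Ie_J$, hence $[e_I,e_J]_{gr}=0$. So the coefficient of $e_{[m]}$ is the genuine supertrace.

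The correct picture, which is the paper's proof, runs as follows. The identity $[e_k,e_ke_S]_{gr}=2a_ke_S$ for $k\notin S$ (equivalently the paper's $[e_j,e_Ie_j]_{gr}=\pm 2a_je_I$) kills every $e_S$ with $S\neq[m]$, including $e_\emptyset$. The supertrace shows that $e_{[m]}$ survives, so $HH_0(A)=k\cdot e_1\cdots e_m$. You reach the right dimension only through two compensating errors. The identification also matters later: the paper uses the volume class $[v_i]=\beta_1\cup\beta_2\cup\beta_3$, not the unit, as the generator of $HH_*(CF(L_i,L_i))$. The parity dependence you anticipated belongs to the ungraded $HH_0$ (ordinary commutators). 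Even there it runs the other way: for odd $m$ the top monomial is central and survives, and it is killed only for even $m$.
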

We explain the proof from \cite{Kassel1986}. 
\begin{proof}
By definition,
\[ HH_0(A) = A/[A,A]_{gr}, \quad \text{ where  } [A,A]_{gr} = \Span \left\{ab-(-1)^{|a||b|}ba \mid a,b \in A\right\} \] is the graded commutator. 
Let $e_1,\ldots, e_n$ be an orthogonal basis of $V$.  
Then $e_ie_j + e_je_i =0$ for all $i\ne j$, and $e_j^2 = q(e_j)\cdot \bold{1} = a_j \cdot \bold{1}\ne 0 $ for all $1\leq j\leq n$. 

For $I \{i_1<\cdots i_k\}\subset \{1,\ldots, n\}$, let 
\[ e_I = e_{i_1}e_{i_2}\cdots  e_{i_k}, \qquad |I|=k. \]  
Let $e_{\emptyset} = 1$. 

We want to show that 
$[A,A]_{gr}$ is the vector space $A_{n-1}$ generated by monomials of the form $e_I$ with $|I|\le n-1$. 

We first show $A_{n-1}\subset [A,A]_{gr}$. 
If $|I|\le n-1$, there exists $j\not \in I$. 
Then \begin{align*}
 [e_j, e_I e_j]_{gr} 
 & = e_j(e_Ie_j) - (-1)^{|e_j||e_Ie_j|}(e_Ie_j)e_j \\
 &= (-1)^{|I|}e_Ie_j\cdot e_j-(-1)^{|I|+1} e_Ie_j\cdot e_j = (-1)^{k}\cdot  2 a_j e_I.     
\end{align*} 
Thus, $e_I\in [A,A]_{gr}$ by the invertibility of $(-1)^k\cdot 2 a_j \in \Lambda$. 

We now show $[A,A]_{gr}\subset A_{n-1}$. 
If $I\cap J=\emptyset$, then 
\[ [e_I,e_J]_{gr} = e_Ie_J-(-1)^{|I||J|+|IJ|}e_Je_I = 0. \]
If $I\cap J=K$, then we may write 
\[ e_I = (-1)^r e_K e_{\hat{I}}, \quad e_J = (-1)^s e_K e_{\hat{J}}, \quad \text{ for some } r,s\in \Z \text{ and }\hat{I}, \hat{K}\subset \{1,\ldots, n\}\setminus K. \]
Then 
\begin{align*}
 [e_I, e_J] 
 & = (-1)^{r+s}[e_K e_{\hat{I}}, e_K e_{\hat{J}}] \\
 & = (-1)^{r+s}\left( (-1)^{|\hat{I}||K|}e_K^2e_{\hat{I}}e_{\hat{J}} -(-1)^{|I||J| + |\hat{J}||\hat{K}|}e_K^2e_{\hat{J}}e_{\hat{I} }
\right)   \\
& = (-1)^{r+s}e_K^2\left( (-1)^{|\hat{I}||K|}e_{\hat{I}}e_{\hat{J}} -(-1)^{|I||J| + |\hat{J}||\hat{K}|}e_{\hat{J}}e_{\hat{I} }. 
\right) 
\end{align*}
Since $e_K^2\in k$ and $ e_{\hat{I}}e_{\hat{J}}$, $e_{\hat{J}}e_{\hat{I} }\in A_{n-1}$, 
we conclude that $[A,A]_{gr} = A_{n-1}$ and that 
\[ HH_0(A)= k\cdot e_1\cdots e_n \cong k.
\] 
Moreover, $HH_i(A)=0$ for $i>0$ by Lemma \ref{Lemma higher hochschild homologies vanish}. 
\end{proof}

We also endow an Clifford algebra with an $A_{\infty}$ algebra structure by setting $\m_2$ to be the Clifford algebra product and $\m_k=0$ for all $k\ne 2$. 
The following definition is borrowed from Sheridan \cite{SheridanFanoHypersurface} Definitions 6.1 and 6.2. 
\begin{defn}[Formal]
A $\mathbb{Z}/2\mathbb{Z}$-graded $A_{\infty}$ algebra $A$ is \textit{formal} if $A$ is $A_{\infty}$ quasi-isomorphic to $H^*(A)$, an $A_{\infty}$ algebra with only $\m_2\ne 0$. 
A $\mathbb{Z}/2\mathbb{Z}$-graded $A_{\infty}$ algebra $A$ is \textit{intrinsically formal} if every $A_{\infty}$ algebra whose $H^*(B,\m_1)$ is isomorphic to $A$ as $\mathbb{Z}/2\mathbb{Z}$-graded algebra is formal.  
\end{defn}
By \cite{SheridanFanoHypersurface} Corollary 6.4, $\Cl(q)$ is intrinsically formal.

\newpage 

\printbibliography

@misc{koblitz1984p,
  title={P-adic Numbers, p-adic Analysis, and Zeta-Functions},
  author={Koblitz, Neal},
  year={1984},
  publisher={New Yook: Springer-Verlag}
}

@article {GSGC,
    AUTHOR = {Guillemin, V. and Sternberg, S.},
     TITLE = {The Gelfand-Cetlin system and quantization of the
              complex flag manifolds},
   JOURNAL = {J. Funct. Anal.},
  FJOURNAL = {Journal of Functional Analysis},
    VOLUME = {52},
      YEAR = {1983},
    NUMBER = {1},
     PAGES = {106--128},
      ISSN = {0022-1236},
   MRCLASS = {58F07 (22E45)},
  MRNUMBER = {705993},
MRREVIEWER = {J.\ H.\ Rawnsley},
       DOI = {10.1016/0022-1236(83)90092-7},
       URL = {https://doi.org/10.1016/0022-1236(83)90092-7},
}

@article {WoodwardExample,
    AUTHOR = {Woodward, Chris},
     TITLE = {Multiplicity-free {H}amiltonian actions need not be
              {K}\"ahler},
   JOURNAL = {Invent. Math.},
  FJOURNAL = {Inventiones Mathematicae},
    VOLUME = {131},
      YEAR = {1998},
    NUMBER = {2},
     PAGES = {311--319},
      ISSN = {0020-9910,1432-1297},
   MRCLASS = {53D20},
  MRNUMBER = {1608579},
MRREVIEWER = {Stephanie\ F.\ Singer},
       DOI = {10.1007/s002220050206},
       URL = {https://doi.org/10.1007/s002220050206},
}

@misc{VW,
      title={Tropical Fukaya Algebras}, 
      author={Sushmita Venugopalan and Chris Woodward},
      year={2022},
      eprint={2004.14314v6},
      archivePrefix={arXiv},
      primaryClass={math.SG},
      url={https://arxiv.org/abs/2004.14314v6}, 
      note={arXiv:2004.14314v6},
}

@article {WoodwardClassification,
    AUTHOR = {Woodward, Chris},
     TITLE = {The classification of transversal multiplicity-free group
              actions},
   JOURNAL = {Ann. Global Anal. Geom.},
  FJOURNAL = {Annals of Global Analysis and Geometry},
    VOLUME = {14},
      YEAR = {1996},
    NUMBER = {1},
     PAGES = {3--42},
      ISSN = {0232-704X,1572-9060},
   MRCLASS = {58F05 (57S15)},
  MRNUMBER = {1375064},
MRREVIEWER = {Michel\ Brion},
       DOI = {10.1007/BF00128193},
       URL = {https://doi.org/10.1007/BF00128193},
}

@article {GoertschesGKM,
    AUTHOR = {Goertsches, Oliver and Konstantis, Panagiotis and Zoller,
              Leopold},
     TITLE = {G{KM} theory and {H}amiltonian non-{K}\"ahler actions in
              dimension 6},
   JOURNAL = {Adv. Math.},
  FJOURNAL = {Advances in Mathematics},
    VOLUME = {368},
      YEAR = {2020},
     PAGES = {107141, 17},
      ISSN = {0001-8708,1090-2082},
   MRCLASS = {57S15 (55N91 57R91)},
  MRNUMBER = {4088417},
MRREVIEWER = {Andrea\ Spiro},
       DOI = {10.1016/j.aim.2020.107141},
       URL = {https://doi.org/10.1016/j.aim.2020.107141},
}

@article {Tolman,
    AUTHOR = {Tolman, Susan},
     TITLE = {Examples of non-{K}\"ahler {H}amiltonian torus actions},
   JOURNAL = {Invent. Math.},
  FJOURNAL = {Inventiones Mathematicae},
    VOLUME = {131},
      YEAR = {1998},
    NUMBER = {2},
     PAGES = {299--310},
      ISSN = {0020-9910,1432-1297},
   MRCLASS = {53D20},
  MRNUMBER = {1608575},
MRREVIEWER = {Stephanie\ F.\ Singer},
       DOI = {10.1007/s002220050205},
       URL = {https://doi.org/10.1007/s002220050205},
}

@article{FOOOI, 
author = {Kenji Fukaya and Yong-Geun Oh and Hiroshi Ohta and Kaoru Ono},
title = {{Lagrangian Floer theory on compact toric manifolds, I}},
volume = {151},
journal = {Duke Mathematical Journal},
number = {1},
publisher = {Duke University Press},
pages = {23 -- 175},
year = {2010},
doi = {10.1215/00127094-2009-062},
URL = {https://doi.org/10.1215/00127094-2009-062}
}

@article {NNU,
    AUTHOR = {Nishinou, Takeo and Nohara, Yuichi and Ueda, Kazushi},
     TITLE = {Toric degenerations of {G}elfand-{C}etlin systems and
              potential functions},
   JOURNAL = {Adv. Math.},
  FJOURNAL = {Advances in Mathematics},
    VOLUME = {224},
      YEAR = {2010},
    NUMBER = {2},
     PAGES = {648--706},
      ISSN = {0001-8708,1090-2082},
   MRCLASS = {53D40 (14M25 53D37)},
  MRNUMBER = {2609019},
MRREVIEWER = {Hsian-Hua\ Tseng},
       DOI = {10.1016/j.aim.2009.12.012},
       URL = {https://doi.org/10.1016/j.aim.2009.12.012},
}

@article {lmtw,
    AUTHOR = {Lerman, Eugene and Meinrenken, Eckhard and Tolman, Sue and
              Woodward, Chris},
     TITLE = {Nonabelian convexity by symplectic cuts},
   JOURNAL = {Topology},
  FJOURNAL = {Topology. An International Journal of Mathematics},
    VOLUME = {37},
      YEAR = {1998},
    NUMBER = {2},
     PAGES = {245--259},
      ISSN = {0040-9383},
   MRCLASS = {58F05 (57S15)},
  MRNUMBER = {1489203},
MRREVIEWER = {Michel\ Brion},
       DOI = {10.1016/S0040-9383(97)00030-X},
       URL = {https://doi.org/10.1016/S0040-9383(97)00030-X},
}

@article {CharestWoodward,
    AUTHOR = {Charest, Fran\c cois and Woodward, Chris T.},
     TITLE = {Floer cohomology and flips},
   JOURNAL = {Mem. Amer. Math. Soc.},
  FJOURNAL = {Memoirs of the American Mathematical Society},
    VOLUME = {279},
      YEAR = {2022},
    NUMBER = {1372},
     PAGES = {v+166},
      ISSN = {0065-9266,1947-6221},
      ISBN = {978-1-4704-5310-7; 978-1-4704-7226-9},
   MRCLASS = {53D40},
  MRNUMBER = {4464438},
MRREVIEWER = {Roman\ Golovko},
       DOI = {10.1090/memo/1372},
       URL = {https://doi.org/10.1090/memo/1372},
}

@article {Cuts,
    AUTHOR = {Lerman, Eugene},
     TITLE = {Symplectic cuts},
   JOURNAL = {Math. Res. Lett.},
  FJOURNAL = {Mathematical Research Letters},
    VOLUME = {2},
      YEAR = {1995},
    NUMBER = {3},
     PAGES = {247--258},
      ISSN = {1073-2780},
   MRCLASS = {58F05 (57S25)},
  MRNUMBER = {1338784},
MRREVIEWER = {Hansj\"org\ Geiges},
       DOI = {10.4310/MRL.1995.v2.n3.a2},
       URL = {https://doi.org/10.4310/MRL.1995.v2.n3.a2},
}

@incollection {Probes,
    AUTHOR = {McDuff, Dusa},
     TITLE = {Displacing {L}agrangian toric fibers via probes},
 BOOKTITLE = {Low-dimensional and symplectic topology},
    SERIES = {Proc. Sympos. Pure Math.},
    VOLUME = {82},
     PAGES = {131--160},
 PUBLISHER = {Amer. Math. Soc., Providence, RI},
      YEAR = {2011},
      ISBN = {978-0-8218-5235-4},
   MRCLASS = {53D12 (52B12 53D20 53D35)},
  MRNUMBER = {2768658},
MRREVIEWER = {Mark\ Alan\ Branson},
       DOI = {10.1090/pspum/082/2768658},
       URL = {https://doi.org/10.1090/pspum/082/2768658},
}

@phdthesis{lanethesis,
  author  = {Lane, Jeremy M.},
  title   = {On the Topology of Collective Integrable Systems},
  school  = {University of Toronto},
  year    = {2017},
}

@article {GSThimm,
    AUTHOR = {Guillemin, Victor and Sternberg, Shlomo},
     TITLE = {On collective complete integrability according to the method
              of {T}himm},
   JOURNAL = {Ergodic Theory Dynam. Systems},
  FJOURNAL = {Ergodic Theory and Dynamical Systems},
    VOLUME = {3},
      YEAR = {1983},
    NUMBER = {2},
     PAGES = {219--230},
      ISSN = {0143-3857,1469-4417},
   MRCLASS = {58F07 (58F05 58F17)},
  MRNUMBER = {742224},
MRREVIEWER = {Manfred\ Andri\'e},
       DOI = {10.1017/S0143385700001930},
       URL = {https://doi.org/10.1017/S0143385700001930},
}

@article {Pabiniak,
    AUTHOR = {Pabiniak, Milena},
     TITLE = {Gromov width of non-regular coadjoint orbits of {$U(n)$},
              {$SO(2n)$} and {$SO(2n+1)$}},
   JOURNAL = {Math. Res. Lett.},
  FJOURNAL = {Mathematical Research Letters},
    VOLUME = {21},
      YEAR = {2014},
    NUMBER = {1},
     PAGES = {187--205},
      ISSN = {1073-2780,1945-001X},
   MRCLASS = {17B10 (14L35 17B08 17B20 53D20)},
  MRNUMBER = {3247049},
MRREVIEWER = {Andreas\ Arvanitoyeorgos},
       DOI = {10.4310/MRL.2014.v21.n1.a15},
       URL = {https://doi.org/10.4310/MRL.2014.v21.n1.a15},
}

@article {ChoKimOh,
    AUTHOR = {Cho, Yunhyung and Kim, Yoosik and Oh, Yong-Geun},
     TITLE = {Lagrangian fibers of {G}elfand-{C}etlin systems},
   JOURNAL = {Adv. Math.},
  FJOURNAL = {Advances in Mathematics},
    VOLUME = {372},
      YEAR = {2020},
     PAGES = {107304, 57},
      ISSN = {0001-8708,1090-2082},
   MRCLASS = {53D37 (14M15 37J06)},
  MRNUMBER = {4126721},
MRREVIEWER = {W.-H.\ Steeb},
       DOI = {10.1016/j.aim.2020.107304},
       URL = {https://doi.org/10.1016/j.aim.2020.107304},
}

@article {DelzantClassification,
    AUTHOR = {Delzant, Thomas},
     TITLE = {Hamiltoniens p\'eriodiques et images convexes de l'application
              moment},
   JOURNAL = {Bull. Soc. Math. France},
  FJOURNAL = {Bulletin de la Soci\'et\'e{} Math\'ematique de France},
    VOLUME = {116},
      YEAR = {1988},
    NUMBER = {3},
     PAGES = {315--339},
      ISSN = {0037-9484},
   MRCLASS = {58F05},
  MRNUMBER = {984900},
MRREVIEWER = {J.\ J.\ Duistermaat},
       URL = {http://www.numdam.org/item?id=BSMF_1988__116_3_315_0},
}

@incollection {NU,
    AUTHOR = {Nohara, Yuichi and Ueda, Kazushi},
     TITLE = {Floer homology for the {G}elfand-{C}etlin system},
 BOOKTITLE = {Real and complex submanifolds},
    SERIES = {Springer Proc. Math. Stat.},
    VOLUME = {106},
     PAGES = {427--436},
 PUBLISHER = {Springer, Tokyo},
      YEAR = {2014},
      ISBN = {978-4-431-55215-4; 978-4-431-55214-7},
   MRCLASS = {53D40},
  MRNUMBER = {3333400},
       DOI = {10.1007/978-4-431-55215-4\_38},
       URL = {https://doi.org/10.1007/978-4-431-55215-4_38},
}

@article {ChoKimOhCrit,
    AUTHOR = {Cho, Yunhyung and Kim, Yoosik and Oh, Yong-Geun},
     TITLE = {A critical point analysis of {L}andau-{G}inzburg potentials
              with bulk in {G}elfand-{C}etlin systems},
   JOURNAL = {Kyoto J. Math.},
  FJOURNAL = {Kyoto Journal of Mathematics},
    VOLUME = {61},
      YEAR = {2021},
    NUMBER = {2},
     PAGES = {259--304},
      ISSN = {2156-2261,2154-3321},
   MRCLASS = {53D40 (14M15 37J35 53D12)},
  MRNUMBER = {4342377},
MRREVIEWER = {W.-H.\ Steeb},
       DOI = {10.1215/21562261-2021-0002},
       URL = {https://doi.org/10.1215/21562261-2021-0002},
}

@article {EvansLekili,
    AUTHOR = {Evans, Jonathan David and Lekili, Yank\i },
     TITLE = {Generating the {F}ukaya categories of {H}amiltonian
              {$G$}-manifolds},
   JOURNAL = {J. Amer. Math. Soc.},
  FJOURNAL = {Journal of the American Mathematical Society},
    VOLUME = {32},
      YEAR = {2019},
    NUMBER = {1},
     PAGES = {119--162},
      ISSN = {0894-0347,1088-6834},
   MRCLASS = {53D37 (53D12 53D40)},
  MRNUMBER = {3868001},
MRREVIEWER = {Georgios\ Dimitroglou Rizell},
       DOI = {10.1090/jams/909},
       URL = {https://doi.org/10.1090/jams/909},
}

@article{FOOOII,
  title={Lagrangian Floer theory on compact toric manifolds II: bulk deformations},
  author={Kenji Fukaya and Yong-Geun Oh and Hiroshi Ohta and Kaoru Ono},
  journal={Selecta Mathematica},
  year={2008},
  volume={17},
  pages={609-711}
}

@book{FOOOIII,
  title={Lagrangian Floer Theory and Mirror Symmetry on Compact Toric Manifolds},
  author={Kenji Fukaya and Yong-Geun Oh and Hiroshi Ohta and Kaoru Ono},
  series={Asterisque Series},
  year={2016},
  publisher={Soci{\'e}t{\'e} Math{\'e}matique de France}
}

@article {PabiniakDisplacing,
    AUTHOR = {Pabiniak, Milena},
     TITLE = {Displacing ({L}agrangian) submanifolds in the manifolds of
              full flags},
   JOURNAL = {Adv. Geom.},
  FJOURNAL = {Advances in Geometry},
    VOLUME = {15},
      YEAR = {2015},
    NUMBER = {1},
     PAGES = {101--108},
      ISSN = {1615-715X,1615-7168},
   MRCLASS = {53D12 (53D35)},
  MRNUMBER = {3300713},
MRREVIEWER = {Goo\ Ishikawa},
       DOI = {10.1515/advgeom-2014-0025},
       URL = {https://doi.org/10.1515/advgeom-2014-0025},
}

@article {GonzalezWoodward,
    AUTHOR = {Gonz\'alez, Eduardo and Woodward, Chris T.},
     TITLE = {Quantum cohomology and toric minimal model programs},
   JOURNAL = {Adv. Math.},
  FJOURNAL = {Advances in Mathematics},
    VOLUME = {353},
      YEAR = {2019},
     PAGES = {591--646},
      ISSN = {0001-8708,1090-2082},
   MRCLASS = {14L24 (14M25 81T70)},
  MRNUMBER = {3986375},
MRREVIEWER = {Andrew\ Kresch},
       DOI = {10.1016/j.aim.2019.07.004},
       URL = {https://doi.org/10.1016/j.aim.2019.07.004},
}

@book{FOOObook2,
  title={Lagrangian Intersection Floer Theory: Anomaly and Obstruction, Part II},
 author = {Kenji Fukaya and Yong-Geun Oh and Hiroshi Ohta and Kaoru Ono},
  %isbn={9780821852507},
  series={AMS/IP studies in advanced mathematics},
  year={2010},
  publisher={American Mathematical Society}
}

@book{fooobook1,
  title={Lagrangian Intersection Floer Theory: Anomaly and Obstruction},
 author = {Kenji Fukaya and Yong-Geun Oh and Hiroshi Ohta and Kaoru Ono},
  number={pt. 1},
  isbn={9780821848364},
  lccn={2009025925},
  series={AMS/IP studies in advanced mathematics},
  % url={https://books.google.com/books?id=9Zofve4n8GEC},
  year={2009},
  publisher={American Mathematical Society}
}

@misc{VWSplit,
      title={Splitting the diagonal for broken maps}, 
      author={Sushmita Venugopalan and Chris Woodward},
      year={2025},
      eprint={2504.15583},
      archivePrefix={arXiv},
      primaryClass={math.SG},
      url={https://arxiv.org/abs/2504.15583}, note={arXiv: 2504.15583}
}

@article {BEHWZ,
    AUTHOR = {Bourgeois, F. and Eliashberg, Y. and Hofer, H. and Wysocki, K.
              and Zehnder, E.},
     TITLE = {Compactness results in symplectic field theory},
   JOURNAL = {Geom. Topol.},
  FJOURNAL = {Geometry and Topology},
    VOLUME = {7},
      YEAR = {2003},
     PAGES = {799--888},
      ISSN = {1465-3060,1364-0380},
   MRCLASS = {53D45 (53D35 53D40 57R17)},
  MRNUMBER = {2026549},
MRREVIEWER = {Kai\ Cieliebak},
       DOI = {10.2140/gt.2003.7.799},
       URL = {https://doi.org/10.2140/gt.2003@misc{venugopalan2023fukayacategoriesblowups,
      title={Fukaya categories of blowups}, 
      author={Sushmita Venugopalan and Chris T. Woodward and Guangbo Xu},
      year={2023},
      eprint={2006.12264},
      archivePrefix={arXiv},
      primaryClass={math.SG},
      url={https://arxiv.org/abs/2006.12264}, 
}.7.799},
}

@misc{AFOOO,
      title={Quantum cohomology
and split generation
in Lagrangian Floer theory}, 
      author={Mohammed Abouzaid and Kenji Fukaya and Yong-Geun Oh and Hiroshi Ohta and Kaoru Ono},
      year={in preparation}
}

@article {IonelParkerRelativeGW,
    AUTHOR = {Ionel, Eleny-Nicoleta and Parker, Thomas H.},
     TITLE = {Relative {G}romov-{W}itten invariants},
   JOURNAL = {Ann. of Math. (2)},
  FJOURNAL = {Annals of Mathematics. Second Series},
    VOLUME = {157},
      YEAR = {2003},
    NUMBER = {1},
     PAGES = {45--96},
      ISSN = {0003-486X,1939-8980},
   MRCLASS = {53D45 (57R17 57R57)},
  MRNUMBER = {1954264},
MRREVIEWER = {Ignasi\ Mundet-Riera},
       DOI = {10.4007/annals.2003.157.45},
       URL = {https://doi.org/10.4007/annals.2003.157.45},
}

@article {AbouzaidGeneration,
    AUTHOR = {Abouzaid, Mohammed},
     TITLE = {A geometric criterion for generating the {F}ukaya category},
   JOURNAL = {Publ. Math. Inst. Hautes \'Etudes Sci.},
  FJOURNAL = {Publications Math\'ematiques. Institut de Hautes \'Etudes
              Scientifiques},
    NUMBER = {112},
      YEAR = {2010},
     PAGES = {191--240},
      ISSN = {0073-8301,1618-1913},
   MRCLASS = {53D37},
  MRNUMBER = {2737980},
MRREVIEWER = {Timothy\ Perutz},
       DOI = {10.1007/s10240-010-0028-5},
       URL = {https://doi.org/10.1007/s10240-010-0028-5},
}

@article {SheridanFanoHypersurface,
    AUTHOR = {Sheridan, Nick},
     TITLE = {On the {F}ukaya category of a {F}ano hypersurface in
              projective space},
   JOURNAL = {Publ. Math. Inst. Hautes \'Etudes Sci.},
  FJOURNAL = {Publications Math\'ematiques. Institut de Hautes \'Etudes
              Scientifiques},
    VOLUME = {124},
      YEAR = {2016},
     PAGES = {165--317},
      ISSN = {0073-8301,1618-1913},
   MRCLASS = {53D37 (14F05 14J33 14N35)},
  MRNUMBER = {3578916},
MRREVIEWER = {Christian\ Lehn},
       DOI = {10.1007/s10240-016-0082-8},
       URL = {https://doi.org/10.1007/s10240-016-0082-8},
}

@misc{VWXBlowups,
      title={Fukaya categories of blowups}, 
      author={Sushmita Venugopalan and Chris T. Woodward and Guangbo Xu},
      year={2023},
      eprint={2006.12264},
      archivePrefix={arXiv},
      primaryClass={math.SG},
      url={https://arxiv.org/abs/2006.12264},   note={arXiv: 2006.12264},

}

@article{Kassel1986,
author = {Kassel, Christian},
journal = {Mathematische Annalen},
pages = {683-699},
title = {A Künneth Formula for the Cyclic Cohomology of Z/2-Graded Algebras.},
url = {http://eudml.org/doc/164178},
volume = {275},
year = {1986},
}

@book {Seidelbook,
    AUTHOR = {Seidel, Paul},
     TITLE = {Fukaya categories and {P}icard-{L}efschetz theory},
    SERIES = {Zurich Lectures in Advanced Mathematics},
 PUBLISHER = {European Mathematical Society (EMS), Z\"urich},
      YEAR = {2008},
     PAGES = {viii+326},
      ISBN = {978-3-03719-063-0},
   MRCLASS = {53D40 (16E45 32Q65 53D12)},
  MRNUMBER = {2441780},
MRREVIEWER = {Timothy\ Perutz},
       DOI = {10.4171/063},
       URL = {https://doi.org/10.4171/063},
}

@book {HelmstetterMicaliClifford,
    AUTHOR = {Helmstetter, Jacques and Micali, Artibano},
     TITLE = {Quadratic mappings and {C}lifford algebras},
 PUBLISHER = {Birkh\"auser Verlag, Basel},
      YEAR = {2008},
     PAGES = {xiv+504},
      ISBN = {978-3-7643-8605-4},
   MRCLASS = {15A66 (11E88 16H05 16W50)},
  MRNUMBER = {2408410},
MRREVIEWER = {A.\ R.\ Wadsworth},
}

@article {Tehrani,
    AUTHOR = {Farajzadeh Tehrani, Mohammad},
     TITLE = {Open {G}romov-{W}itten theory on symplectic manifolds and
              symplectic cutting},
   JOURNAL = {Adv. Math.},
  FJOURNAL = {Advances in Mathematics},
    VOLUME = {232},
      YEAR = {2013},
     PAGES = {238--270},
      ISSN = {0001-8708,1090-2082},
   MRCLASS = {53D45},
  MRNUMBER = {2989982},
MRREVIEWER = {Eduardo\ A.\ Gonzalez},
       DOI = {10.1016/j.aim.2012.09.015},
       URL = {https://doi-org.proxy.libraries.rutgers.edu/10.1016/j.aim.2012.09.015},
}

@article {NoharaUedaNonTorusFibers,
    AUTHOR = {Nohara, Yuichi and Ueda, Kazushi},
     TITLE = {Floer cohomologies of non-torus fibers of the
              {G}elfand-{C}etlin system},
   JOURNAL = {J. Symplectic Geom.},
  FJOURNAL = {The Journal of Symplectic Geometry},
    VOLUME = {14},
      YEAR = {2016},
    NUMBER = {4},
     PAGES = {1251--1293},
      ISSN = {1527-5256,1540-2347},
   MRCLASS = {53D40 (14M15)},
  MRNUMBER = {3601889},
MRREVIEWER = {Ignasi\ Mundet-Riera},
       DOI = {10.4310/JSG.2016.v14.n4.a9},
       URL = {https://doi-org.proxy.libraries.rutgers.edu/10.4310/JSG.2016.v14.n4.a9},
}

@article{CO,
author = {Cheol-Hyun Cho and Yong-Geun Oh},
title = {{Floer cohomology and disc instantons of Lagrangian torus fibers in Fano toric manifolds}},
volume = {10},
journal = {Asian Journal of Mathematics},
number = {4},
publisher = {International Press of Boston},
pages = {773 -- 814},
year = {2006},
}

@misc{xiao2023equivariantlagrangianfloertheory,
      title={Equivariant Lagrangian Floer theory on compact toric manifolds}, 
      author={Yao Xiao},
      year={2023},
      eprint={2310.20202},
      archivePrefix={arXiv},
      primaryClass={math.SG},
      url={https://arxiv.org/abs/2310.20202}, 
      note={arXiv: 2310.20202}
}

@book {MR4800139,
    AUTHOR = {Xiao, Yao},
     TITLE = {Equivariant {L}agrangian {F}loer {T}heory on {C}ompact {T}oric
              {M}anifolds},
      NOTE = {Thesis (Ph.D.)--State University of New York at Stony Brook},
 PUBLISHER = {ProQuest LLC, Ann Arbor, MI},
      YEAR = {2024},
     PAGES = {176},
      ISBN = {979-8383-61359-7},
   MRCLASS = {99-05},
  MRNUMBER = {4800139},
       URL =
              {https://gateway-proquest-com.proxy.libraries.rutgers.edu/openurl?url_ver=Z39.88-2004&rft_val_fmt=info:ofi/fmt:kev:mtx:dissertation&res_dat=xri:pqm&rft_dat=xri:pqdiss:31297740},
}

@article{xiao2024moment,
  title={Moment Lagrangian correspondences are unobstructed after bulk deformation},
  author={Xiao, Yao},
    eprint={2405.11169}, 
  year={2024}, 
  note={arXiv:2405.11169}
}

@article {KnopClassification,
    AUTHOR = {Knop, Friedrich},
     TITLE = {Automorphisms of multiplicity free {H}amiltonian manifolds},
   JOURNAL = {J. Amer. Math. Soc.},
  FJOURNAL = {Journal of the American Mathematical Society},
    VOLUME = {24},
      YEAR = {2011},
    NUMBER = {2},
     PAGES = {567--601},
      ISSN = {0894-0347,1088-6834},
   MRCLASS = {53D20 (14L30 14M27)},
  MRNUMBER = {2748401},
MRREVIEWER = {Eduardo\ A.\ Gonzalez},
       DOI = {10.1090/S0894-0347-2010-00686-8},
       URL = {https://doi-org.proxy.libraries.rutgers.edu/10.1090/S0894-0347-2010-00686-8},
}
\Addresses

\end{document}